\tikzset{vtx/.style={circle,fill=black,inner sep=1.0pt},
         e/.style={line width=0.6pt},
 vtxx/.style={circle,draw,minimum size=8.8mm,inner sep=0pt,line width=0.9pt},
  num/.style={circle,draw,minimum size=8.2mm,inner sep=0pt,line width=0.9pt},
  blackedge/.style={line width=0.9pt},
  rededge/.style={line width=1.6pt,red},
  elab/.style={red,above}       
         }
\newcolumntype{L}[1]{>{\raggedright\arraybackslash}m{#1}}
\newcolumntype{C}[1]{>{\centering\arraybackslash}m{#1}}
\crefname{equation}{}{}
\crefname{enumi}{}{}
\numberwithin{equation}{section}
\newtheorem{theorem}{Theorem}[section]
\newtheorem{proposition}[theorem]{Proposition}
\newtheorem{lemma}[theorem]{Lemma}
\newtheorem{claim}[theorem]{Claim}
\newtheorem{corollary}[theorem]{Corollary}
\newtheorem{conjecture}[theorem]{Conjecture}
\theoremstyle{definition}
\newtheorem{definition}[theorem]{Definition}
\newtheorem{question}[theorem]{Question}
\newtheorem{example}[theorem]{Example}
\theoremstyle{remark}
\newtheorem*{remark}{Remark}
\newcommand{\abs}[1]{\left\lvert#1\right\rvert}
\newcommand{\ang}[1]{\left\langle #1 \right\rangle}
\DeclareMathOperator{\sgn}{sgn}
\DeclareMathOperator{\gr}{gr}
\DeclareMathOperator{\ind}{ind}
\DeclareMathOperator{\eind}{eind}
\DeclareMathOperator{\aind}{aind}
\DeclareMathOperator{\ldg}{ldg}
\DeclareMathOperator{\Aut}{Aut}
\DeclareMathOperator{\supp}{supp}
\newcommand*{\eqdef}{\stackrel{\mbox{\normalfont\tiny def}}{=}}
\newcommand{\RR}{\mathbb{R}}
\newcommand{\NN}{\mathbb{N}}
\newcommand{\ZZ}{\mathbb{Z}}
\newcommand*{\PP}{\mathbb{P}}
\newcommand{\cH}{\mathcal{H}}
\newcommand{\HH}{\mathbb{H}}
\newcommand{\Gedge}{
\begin{tikzpicture}[baseline=-0.6ex,scale=0.6]
  \node[vtx](a) at (0,0){}; \node[vtx](b) at (1.0,0){}; \draw[e](a)--(b);
\end{tikzpicture}}
\newcommand{\Gedgeedge}{
\begin{tikzpicture}[baseline=-0.6ex,scale=0.6]
  \node[vtx](a) at (0,0){}; \node[vtx](b) at (1,0){};
  \node[vtx](c) at (1,1){}; \node[vtx](d) at (0,1){};
  \draw[e](a)--(d);
  \draw[e](b)--(c);
\end{tikzpicture}}
\newcommand{\GstarKoneThree}{
\begin{tikzpicture}[baseline=-0.6ex,scale=0.6]
  \node[vtx](c) at (0,0){};
  \foreach \ang in {90,210,330}{
    \node[vtx](x\ang) at (\ang:0.9){};
    \draw[e](c)--(x\ang);
  }
\end{tikzpicture}}
\newcommand{\GpThree}{
\begin{tikzpicture}[baseline=-0.6ex,scale=0.6]
  \node[vtx](p1) at (0,0){}; \node[vtx](p2) at (0.9,0){}; \node[vtx](p3) at (1.8,0){};
  \draw[e](p1)--(p2)--(p3);
\end{tikzpicture}}
\newcommand{\GcFour}{
\begin{tikzpicture}[baseline=-0.6ex,scale=0.6]
  \node[vtx](a) at (0,0){}; \node[vtx](b) at (1,0){};
  \node[vtx](c) at (1,1){}; \node[vtx](d) at (0,1){};
  \draw[e](a)--(b)--(c)--(d)--(a);
\end{tikzpicture}}
\newcommand{\GkThree}{
\begin{tikzpicture}[baseline=-0.6ex,scale=0.6]
  \node[vtx](a) at (0,0){}; \node[vtx](b) at (1,0){};
  \node[vtx](c) at (0.5,0.866){};
  \draw[e](a)--(b)--(c)--(a);
\end{tikzpicture}}
\newcommand{\GkFour}{
\begin{tikzpicture}[baseline=-0.6ex,scale=0.6]
  \node[vtx](a) at (0,0){}; \node[vtx](b) at (1,0){};
  \node[vtx](c) at (1,1){}; \node[vtx](d) at (0,1){};
  \draw[e](a)--(b) (b)--(c) (c)--(d) (d)--(a);
  \draw[e](a)--(c) (b)--(d);
\end{tikzpicture}}
\newcommand{\GkFourMinusE}{
\begin{tikzpicture}[baseline=-0.6ex,scale=0.6]
  \node[vtx](a) at (0,0){}; \node[vtx](b) at (1,0){};
  \node[vtx](c) at (1,1){}; \node[vtx](d) at (0,1){};
  \draw[e](c)--(d) (b)--(c) (d)--(a);
  \draw[e](a)--(c) (b)--(d);
\end{tikzpicture}}
\newcommand{\GkThreePlusE}{
\begin{tikzpicture}[baseline=-0.6ex,scale=0.6]
  \node[vtx](a) at (0,0){}; \node[vtx](b) at (1,0){};
  \node[vtx](c) at (1,1){}; \node[vtx](d) at (0,1){};
  \draw[e](a)--(d)--(b)--(c)--(d);
\end{tikzpicture}}
\newcommand{\GpFour}{
\begin{tikzpicture}[baseline=-0.6ex,scale=0.6]
  \node[vtx](a) at (0,0){}; \node[vtx](b) at (1,0){};
  \node[vtx](c) at (1,1){}; \node[vtx](d) at (0,1){};
  \draw[e](a)--(d)--(c)--(b);
\end{tikzpicture}}
\newcommand{\GkFiveFiveSplit}{%
\begin{tikzpicture}[scale=0.6, baseline=-0.6ex]

\node[tinyv] (X1) at ( 0.00, 0.58) {}; 
\node[tinyv] (X2) at ( 0.55, 0.18) {}; 
\node[tinyv] (X3) at ( 0.34, -0.50) {}; 
\node[tinyv] (X4) at (-0.34, -0.50) {}; 
\node[tinyv] (X5) at (-0.55, 0.18) {}; 
\node[tinyv] (X6) at (-0.55, 0.18) {};
  \foreach \a/\b in {1/2,1/3,1/4,1/5,2/3,2/4,2/5,3/4,3/5,4/5}{
    \draw[tinyedge] (X\a) -- (X\b);
  }

   \node[tinyv] (Y1) at (1.8, 0.84) {};
  \node[tinyv] (Y2) at (1.8,  0.56) {};
  \node[tinyv] (Y3) at (1.8,  0.28) {};
  \node[tinyv] (Y4) at (1.8,  0.00) {};
  \node[tinyv] (Y5) at (1.8, -0.28) {};
  \node[tinyv] (Y6) at (1.8, -0.56) {};
  \node[tinyv] (Y7) at (1.8, -0.84) {};
  \foreach \i in {1,...,5}{
    \foreach \j in {1,...,7}{
      \draw[faintedge] (X\i) -- (Y\j);
    }
  }
\end{tikzpicture}
}
\newcommand{\GkSixWithSixPendants}{%
\begin{tikzpicture}[scale=0.6, baseline=-0.6ex]
  \foreach \k [count=\i] in {90,150,210,270,330,30}{
    \node[tinyv] (c\i) at ({0.9*cos(\k)},{0.9*sin(\k)}) {};
  }
  \foreach \i in {1,...,6}{\foreach \j in {1,...,6}{\ifnum\j>\i \draw[faintedge] (c\i)--(c\j);\fi}}
  \def\delta{8} \def\R{1.55}
  \foreach \i/\ang in {1/90,2/150,3/210,4/270,5/330,6/30}{
    \foreach \s in {1,...,6}{
      \pgfmathsetmacro{\th}{\ang + (\s-3.5)*\delta}
      \node[tinyv] (p\i\s) at ({\R*cos(\th)},{\R*sin(\th)}) {};
      \draw[tinyedge] (c\i)--(p\i\s);
    }
  }
\end{tikzpicture}
}
\tikzset{
  tinyv/.style={circle, fill=black, inner sep=0.8pt},
  tinyedge/.style={line width=0.28pt},
  faintedge/.style={line width=0.26pt, opacity=0.35},
}
\newcommand{\drawComplete}[1]{%
  \foreach \i [count=\ii] in {#1} {%
    \foreach \j [count=\jj] in {#1} {%
      \ifnum\jj>\ii \draw[tinyedge] \i -- \j; \fi
    }%
  }%
}
\newcommand{\GkSix}{
\begin{tikzpicture}[scale=0.6, baseline=-0.6ex]
  \foreach \k [count=\i] in {90,150,210,270,330,30}{
    \node[tinyv] (v\i) at ({0.9*cos(\k)},{0.9*sin(\k)}) {};
  }
  \foreach \i in {1,...,6}{\foreach \j in {1,...,6}{\ifnum\j>\i \draw[faintedge] (v\i)--(v\j);\fi}}
\end{tikzpicture}
}
\newcommand{\GmatchingFour}{%
\begin{tikzpicture}[scale=0.6, baseline=-0.6ex]
  \foreach \i [count=\r] in {1,...,4} {
    \node[tinyv] (a\r) at (0,1.2-0.4*\r) {};
    \node[tinyv] (b\r) at (1.1,1.2-0.4*\r) {};
    \draw[tinyedge] (a\r)--(b\r);
  }
\end{tikzpicture}
}
\newcommand{\GkFourFour}{
\begin{tikzpicture}[scale=0.6, baseline=-0.6ex]
  \foreach \i [count=\r] in {1,...,4} { \node[tinyv] (L\r) at (0,0.6-0.4*\r) {}; }
  \foreach \j [count=\c] in {1,...,4} { \node[tinyv] (R\c) at (1.6,0.6-0.4*\c) {}; }
  \foreach \r in {1,...,4}{\foreach \c in {1,...,4}{\draw[faintedge] (L\r) -- (R\c);}}
\end{tikzpicture}
}
\newcommand{\GstarKoneSix}{
\begin{tikzpicture}[scale=0.6, baseline=-0.6ex]
  \node[tinyv] (c) at (0,0) {};
  \foreach \k in {0,60,120,180,240,300}{
    \node[tinyv] (l\k) at ({1.0*cos(\k)},{1.0*sin(\k)}) {};
    \draw[tinyedge] (c) -- (l\k);
  }
\end{tikzpicture}
}
\newcommand{\GTrianglePendant}{\scalebox{0.8}{\begin{tikzpicture}[scale = 0.4,
  plusmarks/.style={
    postaction=decorate,
    decoration = {
            markings,
            mark = 
                between positions 0.08 and 0.92 step 6pt 
                with
                {
                    \draw (0pt, 2pt) -- (0pt, -2pt);
                    \draw (-2pt, 0pt) -- (2pt, 0pt);
                }
            }
  },
  halfplus/.style={
    postaction=decorate,
    decoration={
            markings,
            mark = 
                between positions 0.08 and 0.5 step 6pt 
                with
                {
                    \draw (0pt, 2pt) -- (0pt, -2pt);
                    \draw (-2pt, 0pt) -- (2pt, 0pt);
                }
    }
  },
  halfminus/.style={
    postaction=decorate,
    decoration={markings,
      mark=between positions 0.5 and 0.92 step 6pt with
        {
                    \draw (-2pt, 0pt) -- (2pt, 0pt);}
    }
  }
]

\coordinate (A) at (0,0);
\coordinate (B) at (5.1,0);
\coordinate (C) at (10.2,0);
\draw [fill] (A) circle (5pt);
\draw [fill] (B) circle (5pt);
\draw [fill] (C) circle (5pt);

\path[plusmarks] (0,-0.3)  --  (5.1,-0.3);
\path[halfplus]  (0,0.3)  --  (5.1,0.3);
\path[halfminus] (0,0.3)  --  (5.1,0.3);

\path[plusmarks] (5.1,-0.3)  --  (10.2,-0.3);
\path[halfplus]  (5.1,0.3)  --  (10.2,0.3);
\path[halfminus] (5.1,0.3)  --  (10.2,0.3);

\path[plusmarks] (0.3-0.6,0.25)  .. controls (5.1,2.7) ..  (9.8+0.6,0.25);
\path[halfplus]  (-0.3-0.5,0.3)  .. controls (5.1,3.3) ..   (10.5+0.5,0.3);
\path[halfminus] (-0.3-0.5,0.3)  .. controls (5.1,3.3) ..   (10.5+0.5,0.3);

\end{tikzpicture}}
}
\newlength{\hght}
\newcommand\thankssymb[1]{\textsuperscript{\@fnsymbol{#1}}}
\author[Ting-Wei Chao]{Ting-Wei Chao\thankssymb{1}}
\author[Asaf Cohen Antonir]{Asaf Cohen Antonir\thankssymb{3}}
\author[Anqi Li]{Anqi Li\thankssymb{4}}
\author[Hung-Hsun Hans Yu]{Hung-Hsun Hans Yu\thankssymb{2}}
\thanks{\thankssymb{1}Department of Mathematics, Massachusetts Institute of Technology, Cambridge, MA 02139, USA. Email: {\tt twchao@mit.edu}}
\thanks{\thankssymb{2}Department of Mathematics, Princeton University, Princeton, NJ 08544\@.  Email: {\tt hansonyu@princeton.edu}}
\thanks{\thankssymb{3}School of Mathematical Sciences, Tel Aviv University, Tel Aviv 69978, Israel. Email {\tt asafc1@tauex.tau.ac.il}}
\thanks{\thankssymb{4}Department of Mathematics, Stanford University, Stanford, CA 94305, USA. Email: {\tt aqli@stanford.edu}}
\title{Edge inducibility via local directed graphs}
\begin{document}

\begin{abstract}
    In this paper we introduce the \emph{edge inducibility} problem. This is a common refinement of both the well known Kruskal--Katona theorem and the inducibility question introduced by Pippenger and Golumbic. 

    Our first result is a hardness result.
    It shows that for any graph $G$, there is a related graph $G'$ whose edge inducibility determines the vertex inducibility of $G$.
    Moreover, we determine the edge inducibility of every $G$ with at most $4$ vertices, and make some progress on the cases $G=C_5,P_6$. Lastly, we extend our hardness result to graphs with a perfect matching that is the unique fractional perfect matching. This is done by introducing \emph{locally directed graphs}, which are natural generalizations of directed graphs. 
\end{abstract}

\maketitle

\section{Introduction}

Given a graph $G$, determining the maximum number of copies or induced copies of $G$ in a graph $H$ with some combinatorial restrictions lies at the heart of extremal graph theory. Two well-known problems falling under this description are the Kruskal--Katona theorem and the inducibility question. In this paper, we study a new and natural question in the common refinement of these problems.

To make our discussion rigorous, for graphs $G$ and $H$, set $N(G, H)$ and $N_{\ind}(G, H)$ to be the number of copies and induced copies of $G$ in $H$, respectively. The above question can then be restated, asking to determine 
\[
	c_{\mathcal{H}}(G)\eqdef \sup \{N(G,H) : H\in \mathcal{H} \} \quad \text{and}\quad \ind_{\mathcal{H}}(G)\eqdef \sup\{N_{\ind}(G,H) : H\in \mathcal{H} \},
\]
where $\mathcal{H}$ is some family of graphs.

The inducibility question of Pippenger and Golumbic \cite{PipGol1975} asks to determine $\ind_{\mathcal{H}}(G)$ for the most natural family of graphs $\mathcal{H}$, the family of $n$-vertex graphs. Writing $\ind(G,n) = \ind_{\mathcal{H}_n}(G)$ for $\mathcal{H}_n$ the family of $n$-vertex graphs, it was shown by Pippenger and Golumbic that there is a constant $\ind(G)\in [0,1]$, called the \textbf{inducibility} of $G$, such that 
\[
	\lim_{n\to \infty} \frac{\ind(G,n)}{\binom{n}{v(G)}} \eqqcolon \ind(G).
\]  
Despite the significant effort in determining $\ind(G)$ for various graphs $G$ (see \cite{BalHuLidPfe2016,BonPik2025, BolEgaHarJin1995, EveLin2015, HatHirNor2014, HefTyo2018,Hir2014, KraNorVol2019, MorSco2017, PikSliTyr2019, Uel2024, Yus2019}), even the case of $G=P_4$, the path on $4$ vertices, remains unsolved \cite{EveLin2015}. As was observed by Pippenger and Golumbic \cite{PipGol1975}, for a $k$-vertex graph $G$, an iterated blowup\footnote{An iterated blowup of a graph $G$ is a graph that can be obtained by repeatedly blowing up each vertex with $\abs{V(G)}$ copies of itself that induce a graph isomorphic to $G$.} gives rise to a lower bound $\ind(G)\geq \frac{k!}{k^k-k}$. They conjectured that for $k\geq 5$, this lower bound is tight for $C_k$, the cycle on $k$ vertices. There were several attempts to resolve this conjecture for large $k$ \cite{HefTyo2018,MorSco2017,PipGol1975}, where the current state of the art is due to Kr\'al', Norin, and Volec \cite{KraNorVol2019}, who showed an upper bound far by a factor of $2$ from the conjectured lower bound. As for small values of $k$, this conjecture is solved only when $k=5$ by Balogh, Hu, Lidick\'y, and Pfender \cite{BalHuLidPfe2016}, leaving the rest of the cases wide open. The method in \cite{BalHuLidPfe2016} uses a stability result together with the famous flag algebra method of Razbarov \cite{Raz2007}. This method was also used to determine the inducibility of other small graphs (see \cite{BonPik2025,EveLin2015,Hir2014,PikSliTyr2019}). 
Lastly, let us remark that for large blowups \cite{HatHirNor2014} and ``quasirandom'' graphs \cite{FoxSauWei2021,Yus2019} it was shown that the iterated blowups give rise to optimal bounds.

Let us now turn our attention to the parameter $c_{\mathcal{H}}(G)$. Note that if we take $\mathcal{H}$ to be $\mathcal{H}_n$, the quantity $c_{\mathcal{H}}(G)$ is trivially maximized by $H=K_n$. The next natural candidate to examine is the family of graphs having $m$ edges. 
For simplicity, let us write $c(G,m)\eqdef c_{\mathcal{H}}(G)$ with this choice of $\cH$. 
The celebrated Kruskal--Katona theorem \cite{Kat68,Kru63} provides an exact formula for $c(K_r,m)$ for every nonnegative integers $r,m$.
Lov\'asz also proved a weaker but cleaner version of the Kruskal--Katona theorem, namely that $c\left(K_r,\binom{x}{2}\right)\leq \binom{x}{r}$ for any real number $x\geq r$.
The interested reader is referred to \cite[Chapter 13, Exercise 31b]{Lovasz93}, as well as a new entropic proof of this theorem \cite{ChaYu2024KK}. 

In the general case, Alon \cite{Alo1981} determined, for each fixed $G$, the function $c(G,m)$ up to a multiplicative factor. 
This was later reiterated and generalized to hypergraphs by Friedgut and Kahn \cite{FriKah1998} via a simple proof using duality of linear programs. Their proof can be stated beautifully via entropy as was done in the excellent survey of Galvin \cite{Gal2014}.

Recalling that $\alpha^*(G)$ denotes the \emph{fractional independence number} of $G$ (see Section \ref{sec:prelim} for the definition), the results of Alon and of Friedgut and Kahn can be stated as follows.

\begin{theorem}[Alon \cite{Alo1981}, Friegdut--Kahn \cite{FriKah1998}]\label{thm: Alon}
For a graph $G$ without isolated vertices and $m\in \mathbb{N}$
	\[ 
		(1-o_G(1))\left(\frac{m}{e(G)}\right)^{\alpha^*(G)}\le c(G,m)\le \frac{(2m)^{\alpha^*(G)}}{|\Aut(G)|}.
	\]
\end{theorem}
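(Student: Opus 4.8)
The plan is to prove the two inequalities along their classical routes: the upper bound by the Friedgut--Kahn argument (phrased entropically, as in Galvin's survey), and the lower bound by a weighted blow-up of $G$.

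For the upper bound, I would first observe that every copy of $G$ in $H$ is the image of exactly $|\Aut(G)|$ injective homomorphisms $G\to H$, so $|\Aut(G)|\cdot N(G,H)$ equals the number of such injective homomorphisms, which is at most $\hom(G,H)$, the number of all homomorphisms $G\to H$; it thus suffices to show $\hom(G,H)\le (2m)^{\alpha^*(G)}$ whenever $e(H)=m$. (The hypothesis that $G$ has no isolated vertices is essential here: an isolated vertex of $G$ could be sent to any vertex of $H$, which would force $c(G,m)=\infty$.) Assume $\hom(G,H)\ge 1$ and let $\phi\colon V(G)\to V(H)$ be a uniformly random homomorphism, so that $\HH(\phi)=\log_2\hom(G,H)$. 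Fix an optimal fractional edge cover $(y_e)_{e\in E(G)}$, i.e.\ $y_e\ge 0$ with $\sum_{e\ni v}y_e\ge 1$ for every $v$ and $\sum_e y_e=\rho^*(G)$, the fractional edge cover number. Since the edges of $G$, weighted by $y$, cover $V(G)$, the fractional form of Shearer's entropy inequality gives
\[
\HH(\phi)\le\sum_{uv\in E(G)}y_{uv}\,\HH\paren{\phi(u),\phi(v)};
\]
for each edge $uv$ the pair $(\phi(u),\phi(v))$ ranges over the at most $2m$ ordered pairs of adjacent vertices of $H$, so $\HH(\phi(u),\phi(v))\le\log_2(2m)$, and hence $\log_2\hom(G,H)\le\rho^*(G)\log_2(2m)$. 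Finally I would invoke the identity $\rho^*(G)=\alpha^*(G)$: the LP defining $\rho^*(G)$ (minimize $\mathbf{1}^\top y$ over $y\ge 0$ covering every vertex) is dual to the LP defining $\alpha^*(G)$ (maximize $\mathbf{1}^\top z$ over $z\ge 0$ with $z_u+z_v\le 1$ on each edge), and both are feasible since $G$ has no isolated vertices, so strong LP duality yields the equality.

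For the lower bound, I would build $H$ as a weighted blow-up. Fix an optimal fractional independent set $x\in[0,1]^{V(G)}$, so that $x_u+x_v\le 1$ for every $uv\in E(G)$ and $\sum_v x_v=\alpha^*(G)$ (the maximum is attained, since the feasible region is a compact polytope); given a large $m$, put $t=m/e(G)$, set $s_v=\floor{t^{x_v}}$ (so $s_v\ge 1$), and let $H$ replace each vertex $v$ of $G$ by an independent set $S_v$ of size $s_v$ and each edge of $G$ by a complete bipartite graph. Then, using $x_u+x_v\le 1$ and $t\ge 1$,
\[
e(H)=\sum_{uv\in E(G)}s_us_v\le\sum_{uv\in E(G)}t^{x_u+x_v}\le e(G)\cdot t=m,
\]
and since adding edges never decreases the number of copies of $G$, we may pad $H$ up to exactly $m$ edges. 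Now for every $\alpha\in\Aut(G)$ and every choice of $(a_v)_v$ with $a_v\in S_v$, the map $v\mapsto a_{\alpha(v)}$ is an injective homomorphism $G\to H$, and because the parts $S_v$ are pairwise disjoint, distinct pairs $(\alpha,(a_v)_v)$ give distinct such maps; hence $G$ has at least $|\Aut(G)|\prod_v s_v$ injective homomorphisms into $H$, so $N(G,H)\ge\prod_v s_v$. Finally, as $m\to\infty$ with $G$ fixed we have $t\to\infty$, so $\floor{t^{x_v}}=(1-o_G(1))\,t^{x_v}$ for every $v$ with $x_v>0$, and therefore
\[
\prod_v s_v=(1-o_G(1))\,t^{\sum_v x_v}=(1-o_G(1))\paren{\frac{m}{e(G)}}^{\alpha^*(G)},
\]
giving $c(G,m)\ge(1-o_G(1))(m/e(G))^{\alpha^*(G)}$.

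The entropy estimate and the identity $\rho^*(G)=\alpha^*(G)$ are routine once the fractional Shearer inequality and LP duality are available. The step I expect to require the most care is the lower bound: the blow-up must be built from an \emph{optimal} fractional independent set, and its copies of $G$ must be counted using the \emph{full} automorphism group $\Aut(G)$ --- this is exactly what cancels the factor $1/|\Aut(G)|$ that a naive ``one vertex per part'' count would leave, so that the lower bound lines up with the $|\Aut(G)|$ in the upper bound --- after which one still has to check that replacing each $t^{x_v}$ by $\floor{t^{x_v}}$ costs only a multiplicative factor $1-o_G(1)$.
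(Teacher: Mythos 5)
Your proposal is correct and follows precisely the route this cited result's original proofs take (and that the paper points to): the paper itself does not reprove Theorem~\ref{thm: Alon}, but your upper bound is the standard Friedgut--Kahn argument in its entropic form from Galvin's survey (fractional Shearer over an optimal fractional edge cover, $\HH(\phi(u),\phi(v))\le\log_2(2m)$, then $\rho^*(G)=\alpha^*(G)$ by LP duality, valid since $G$ has no isolated vertices), and your weighted blow-up lower bound is essentially the same construction the paper uses in Lemma~\ref{lem:simple bound}. I see no gaps; if anything, the $\Aut(G)$ bookkeeping in your lower bound is more than needed, since distinct one-vertex-per-part selections already have distinct vertex sets and hence span distinct copies of $G$, giving $N(G,H)\ge\prod_v s_v$ directly.
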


Besides the elegance in the statement and its proofs, Theorem \ref{thm: Alon} became central in studying a seemingly unrelated major problem in random graphs---The Infamous Upper Tail \cite{JanRuc2002}. 
Indeed, in the influential work on the upper tail problem by Janson, Oleszkiewicz, and Ruci\'nski \cite{JanOleRuc2004}, evaluating $c_\mathcal{H}(G)$ for the family $\mathcal{H}$ of $n$-vertex and $m$-edge graphs, was key. 
This was later used crucially in the recent breakthrough of Harel, Mousset, and Samotij \cite{HarMouSam2022}.

Finally, our attention is directed to the new problem that we suggest. Indeed, the above discussion naturally leads to the following question.
\begin{question}\label{question: edge inducibility}
	 For a graph $G$ and a nonnegative integer $m$, setting $\mathcal{H}$ to the family of $m$-edge graphs, what is the asymptotic value of $\eind(G,m)\eqdef \ind_{\mathcal{H}}(G)$?
\end{question}

In this paper, we are interested in studying Question \ref{question: edge inducibility} when $m$ is large. Our first observation is that for every graph $G$ without isolated vertices and a nonnegative integer $m$, we have
\begin{equation}\label{eq: trivial bounds on edge inducibility}
    (1-o_G(1))\left(\frac{m}{e(G)}\right)^{\alpha^*(G)}\le \eind(G,m)\le \frac{(2m)^{\alpha^*(G)}}{\abs{\Aut(G)}}.
\end{equation}
This is an immediate corollary of Theorem \ref{thm: Alon}, as shown in Lemma \ref{lem:simple bound}. Given this observation, it is plausible to make the following central definition of this paper.
\begin{definition}
	For a graph $G$ without isolated vertices, define its \textbf{edge inducibility} to be
\[
	\eind(G) \eqdef \limsup_{m\to\infty} \frac{\abs{\Aut(G)}\eind(G,m)}{(2m)^{\alpha^*(G)}}\in [0,1].
\]
\end{definition}

We remark that this graph parameter was implicitly studied by  Bollob\'as, Nara, and Tachibana \cite{BolNarTac1986} for the case of balanced complete bipartite graphs. 

Our first result, perhaps surprising, shows that the edge inducibility problem---i.e.\ determining $\eind(H)$ for a fixed graph $H$---is ``harder'' than the classical vertex inducibility. Formally, this is given by the following theorem.
\begin{theorem}\label{thm:hardness-result}
	Suppose that $G$ is a graph, then there is a graph $G'$ such that
	\[
		 \frac{\ind(G)}{2^{v(G)} v(G)! }=\frac{\eind(G')}{|\Aut(G')|}.
	\]
\end{theorem}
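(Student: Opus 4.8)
The plan is to design $G'$ so that the extremal problem behind $\eind(G',\cdot)$ over $m$-edge graphs becomes, in the limit, a thickened copy of the extremal problem behind $\ind(G,\cdot)$ over $n$-vertex graphs, with $n$ and $m$ linked linearly up to lower-order factors. Concretely, I would build $G'$ from $G$ by a gadget substitution: assuming (harmlessly) that $G$ has no isolated vertices, attach to each vertex of $G$ a small pendant gadget, and between gadgets insert edges that record the adjacency relation of $G$. The substitution should be arranged so that (i) $\alpha^*(G')=v(G)$, witnessed by the uniform fractional independent set giving every vertex weight $\tfrac12$, which is (essentially) the unique optimum; (ii) $G'$ carries an essentially unique perfect matching, realised by the pendant edges, whose image is forced in every induced copy of $G'$ inside a host; and (iii) $\Aut(G')$ is the product of $\Aut(G)$ with the gadget symmetries, yielding the factor $2^{v(G)}v(G)!$ of the statement -- one $2$ for flipping each of the $v(G)$ pendant edges, and $v(G)!$ for permuting the gadgets.

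For the inequality $\eind(G')/|\Aut(G')|\ge \ind(G)/(2^{v(G)}v(G)!)$, take an $n$-vertex host $H_0$ realising $\ind(G,n)=(1-o(1))\,\ind(G)\binom{n}{v(G)}$, apply the gadget substitution to $H_0$, and then take a suitable blow-up (chosen so that, because the large parts lie along an independent set of $G'$, the number of edges grows only linearly in the blow-up parameter $t$, i.e.\ $e(H)=(1+o(1))\,c_G t$ for a constant $c_G$ depending only on $G$). The induced copies of $G'$ in $H$ that respect the intended gadget structure are then in bijection with pairs consisting of an induced copy of $G$ in $H_0$ and a transversal of the large parts; counting these, dividing by $(2m)^{v(G)}/|\Aut(G')|$, and letting $m,t\to\infty$, the constants $c_G$, $2^{v(G)}$, $v(G)!$ and $|\Aut(G')|$ cancel to leave exactly $\ind(G)/(2^{v(G)}v(G)!)$.

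The reverse inequality is the heart of the proof. For an arbitrary $m$-edge host $H$, the Friedgut--Kahn/Alon entropy argument (Theorem~\ref{thm: Alon}, via $\eind(G',m)\le c(G',m)$) already gives $N_{\ind}(G',H)\le (2m)^{\alpha^*(G')}/|\Aut(G')|=(2m)^{v(G)}/|\Aut(G')|$; what must be extracted is precisely the extra factor $\ind(G)$. I would run the entropy / LP-duality computation for $G'$ and analyse its near-equality case, aiming for a stability statement: up to an $o(1)$ fraction of its copies of $G'$, a near-extremal host $H$ must look like a blow-up governed by the uniform $\tfrac12$-weighting -- a partition into $2v(G)$ parts, paired by the pendant structure, with between-part adjacency complete bipartite or empty exactly as dictated by $G'$ -- so that the only surviving freedom is a graph $H_0$ on the $n=\Theta(m)$ ``slots'', under which each induced copy of $G'$ in $H$ projects to an induced copy of $G$ in $H_0$. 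This gives $N_{\ind}(G',H)\le (1+o(1))\cdot(\text{gadget factor})\cdot N_{\ind}(G,H_0)\le (1+o(1))\cdot(\text{gadget factor})\cdot\ind(G,n)$, and unwinding the normalisations yields the matching upper bound, hence equality.

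The main obstacle is exactly this stability statement. Establishing it seems to require three ingredients: a robust version of the Friedgut--Kahn inequality pinning down the ``shape'' of near-extremal $H$; a combinatorial argument -- in the spirit of Kruskal--Katona compressions, or of a graph removal lemma -- ruling out ``cheating'' embeddings of $G'$ that ignore the intended gadget structure, which is where the rigidity built into the gadget (the forced perfect matching and the unique optimal fractional independent set) is essential; and careful bookkeeping of the automorphism counts $|\Aut(G')|$, $2^{v(G)}$ and $v(G)!$, so as to recover the exact constant of the theorem rather than merely the correct order of magnitude.
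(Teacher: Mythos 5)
Your construction of $G'$ (pendant gadgets recording the adjacency of $G$) and your lower-bound argument are essentially the paper's: the paper takes $G'$ to be $G$ with a pendant edge attached to each vertex, builds the host $H'=H\circ\overline{K_s}$ from an extremal $n$-vertex host $H$ for $\ind(G)$, and lets $s\gg t$ so that $e(H')\le\binom{t}{2}+st$ grows linearly; each induced copy of $G$ lifts to $s^{v(G)}$ induced copies of $G'$. (One bookkeeping slip: the factor $2^{v(G)}v(G)!$ does \emph{not} come from $\Aut(G')$ --- for this $G'$ one has $\Aut(G')\cong\Aut(G)$, since pendant leaves cannot be flipped or permuted independently of $G$ --- it comes purely from the normalisations $(2m)^{\alpha^*(G')}$ versus $\binom{n}{v(G)}$.)

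The genuine gap is the upper bound, which you reduce to a stability statement for near-extremal $m$-edge hosts (``near-extremal $H$ must look like a blow-up governed by the uniform $\tfrac12$-weighting'') that you acknowledge you cannot prove. No such stability analysis is needed, and the missing idea is a direct ``pinching'' projection valid for \emph{every} host $H'_m$ with $m$ edges: form an auxiliary graph $H_m$ whose vertices are the edges of $H'_m$, joining $v_e$ and $v_f$ exactly when $e\cap f=\varnothing$ and some edge of $H'_m$ meets both $e$ and $f$. Because the pendant edges form the unique perfect matching of $G'$, every induced copy of $G'$ in $H'_m$ determines its pendant matching, and (using inducedness, since any edge meeting two disjoint pendant edges of the copy lies inside the copy) this matching spans an induced copy of $G$ in $H_m$; the map on vertex sets is injective. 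Hence $N_{\ind}(G',H'_m)\le N_{\ind}(G,H_m)\le\ind(G,m)$ for all $m$-edge hosts simultaneously, and dividing by $(2m)^{v(G)}/\abs{\Aut(G')}$ gives $\eind(G')\le\ind(G)\cdot\abs{\Aut(G')}/(2^{v(G)}v(G)!)$ exactly, with no removal lemma, no robust Friedgut--Kahn inequality, and no analysis of cheating embeddings (these are ruled out structurally by the uniqueness of the perfect matching in $G'$). Without this step, or a completed proof of your proposed stability program, your argument only yields the one-sided inequality coming from the construction, not the equality claimed in the theorem.
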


Let us note that this is indeed a hardness-type result, as resolving the edge inducibility yields a solution to the vertex inducibility.
That said, for natural graphs, this problem seems more tractable than the vertex inducibility. Supporting this, in \cref{section: graphs smaller than 4} we compute the edge inducibility for all $4$-vertex graphs, a task that is still out of reach for the vertex inducibility \cite{EveLin2015,Hir2014}. Moreover, our proofs are non-computer-assisted, in contrast to many proofs in the vertex inducibility realm \cite{BonPik2025,EveLin2015,Hir2014,PikSliTyr2019}. 

In fact, our argument for \cref{thm:hardness-result} can be generalized a lot, which turns out to be a useful machinery for bounding edge inducibilities of certain graphs.
To be more specific, the graphs where the argument can be generalized to are precisely graphs $G$ with a perfect matching $M$ that is the unique \emph{fractional perfect matching} (see \cref{sec:prelim} for the definition).
For any graph $G$ that has this property, it turns out that its edge inducibility is in close relation to the vertex inducibility of a ``locally directed graph'' that $G$ induces.
Due to its technicality, we only remark that a locally directed graph is a generalization of a directed graph, and we postpone its formal definition until \cref{section: local digraph}.
We will also not state the generalization of \cref{thm:hardness-result} until then, and instead we list several consequences of the generalization here.

The first is a lower bound on $\eind(G)$ that is better than what \cref{eq: trivial bounds on edge inducibility} gives.
Equation \cref{eq: trivial bounds on edge inducibility} gives that $\eind(G)\geq \frac{\abs{\Aut(G)}}{(2e(G))^{\alpha^*(G)}}$ in general.
However, if $G$ admits a perfect matching that is the unique fractional perfect matching, we have the following better lower bound.

\begin{theorem}\label{thm:better-construction-intro}
    Let $G$ be a graph with a perfect matching $M$ of size $k$ that is the unique fractional perfect matching.
    Then
    \[\eind(G)\geq \frac{\abs{\Aut(G)}}{2^{k}(k^{k}-k)}.\]
\end{theorem}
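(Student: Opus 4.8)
The plan is to reduce the bound to an iterated‑blowup computation on an auxiliary \emph{locally directed graph}. Write $2k=v(G)$. Since $M$ is the unique fractional perfect matching it is a maximum fractional matching, so the fractional matching number of $G$ equals $k$; by LP duality the fractional vertex cover number is $k$ as well, whence $\alpha^{*}(G)=v(G)-k=k$. Thus $\eind(G)=\limsup_{m\to\infty}\frac{|\Aut(G)|\,\eind(G,m)}{(2m)^{k}}$, and it suffices to construct, for infinitely many $m$, an $m$‑edge graph $H$ with $N_{\ind}(G,H)\ge(1-o_{G}(1))\,\frac{m^{k}}{k^{k}-k}$.

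The first step, which is where essentially all the content sits and which is exactly the generalization of \cref{thm:hardness-result} to be developed in \cref{section: local digraph}, attaches to the pair $(G,M)$ a locally directed graph $\vec G$ on $k$ vertices, one per edge of $M$, each vertex recording, for every incident non‑matching edge of $G$, which of its two ends that edge uses. One realizes any blow‑up of $\vec G$ as an honest graph by replacing each structural edge by a copy of an edge of $M$ and inserting the non‑matching edges of $G$ between the corresponding pieces according to the local orientations, and one checks that this yields $\eind(G)\ge\frac{|\Aut(G)|}{2^{k}}\cdot\iota(\vec G)$, where $\iota(\vec G)$ is the inducibility of $\vec G$ normalized so that $\iota(\vec G)=\lim_{\ell}N_{\ind}(\vec G,D_{\ell})/s_{\ell}^{k}$ for the level‑$\ell$ blow‑ups $D_{\ell}$, with $s_{\ell}$ the number of structural edges. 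The delicate point, and the reason the unique‑fractional‑perfect‑matching hypothesis is needed, is that the non‑matching edges of $G$ can be inserted \emph{sparsely} — not as complete bipartite graphs — so that $e(H)=(1+o(1))s_{\ell}$ stays of the same order as $s_{\ell}$ and the exponent $\alpha^{*}(G)=k$ is respected; uniqueness of $M$ is precisely what makes the local orientation of each structural edge well defined and stable under blow‑ups.

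The second step is the lower bound $\iota(\vec G)\ge\frac{1}{k^{k}-k}$, the verbatim analogue of the Pippenger--Golumbic iterated blowup. Form $D_{\ell}$ by replacing each of the $k$ vertices of $\vec G$ by a compatibly oriented copy of $D_{\ell-1}$, so $s_{\ell}=k\,s_{\ell-1}=k^{\ell}s_{0}$, and set $a_{\ell}=N_{\ind}(\vec G,D_{\ell})$. An induced copy of $\vec G$ in $D_{\ell}$ either lies inside one of the $k$ sub‑blobs, or is a transversal picking one structural edge from each sub‑blob; the local orientations make each of the $s_{\ell-1}^{k}$ transversals assemble into exactly one copy, so $a_{\ell}\ge k\,a_{\ell-1}+s_{\ell-1}^{k}$, which (using $k\ge 2$) solves to $a_{\ell}=(1+o(1))\frac{s_{\ell}^{k}}{k^{k}-k}$. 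Feeding this into the first step gives $\eind(G)\ge\frac{|\Aut(G)|}{2^{k}(k^{k}-k)}$.

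I expect the obstacle to lie entirely in the first step. The counting in the second step is essentially Pippenger--Golumbic once the framework is in place; but correctly defining locally directed graphs and their blow‑ups, proving the correspondence between induced copies of $G$ in honest graphs and induced copies of $\vec G$ in locally directed graphs up to the factor $\frac{|\Aut(G)|}{2^{k}}$, and — most delicately — arranging the realization as an honest graph so that the non‑matching edges of $G$ remain sparse (so that $e(H)$ does not blow up), is the real work, and is the subject of \cref{section: local digraph}.
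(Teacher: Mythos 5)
Your overall route is the same as the paper's: pass from $(G,M)$ to the local digraph $\ldg(G,M)$ on $k$ vertices, lower-bound its inducibility by the Pippenger--Golumbic iterated-blowup recursion (your step 2 is fine and matches the paper's $\aind(\ldg(G,M))\geq \frac{k!}{k^k-k}$ after adjusting normalizations), and transfer back with the factor $\frac{|\Aut(G)|}{2^k}$. But the transfer step, which you yourself identify as "where essentially all the content sits," is not proved in your proposal, and the justification you sketch for it is wrong in a way that matters. Uniqueness of the fractional perfect matching is \emph{not} needed to make the local orientations well defined -- the signs $\sgn_e$ are an arbitrary choice per matching edge, made with or without uniqueness. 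What uniqueness actually buys (this is \cref{prop:unique-pm-imply-ldag}) is that $\ldg(G,M)$ is \emph{acyclic}, and acyclicity is precisely the ingredient that makes the "sparse realization" possible: one takes a topological sort (\cref{prop:topological-sort}) and blows up the two endpoints of the $i$-th matching edge into parts of sizes $k^{\alpha_i}$ and $k^{1-\alpha_i}$ with $\alpha_1<\cdots<\alpha_n<\tfrac12$ increasing along the sort, so that every cross (non-matching) complete bipartite piece has $k^{\alpha_i+\alpha_j}$ or $k^{\alpha_i+1-\alpha_j}=o(k)$ edges and the total edge count is $(1+o(1))nk$. The realization you describe -- replace each vertex of the blow-up by a single copy of a matching edge and insert non-matching edges "according to the local orientations" -- produces at least one honest edge per local-digraph edge, i.e.\ $\Theta(s_\ell^2)$ edges, which destroys the bound; and your alternative remark that the insertions are "not complete bipartite graphs" does not correspond to any construction you give.

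A second, related omission: the transfer inequality $\eind(G)\geq \frac{|\Aut(G)|}{2^k}\cdot\frac{N_{\ind}(\ldg(G,M),H)}{n^k}$ (the lower-bound half of \cref{thm:aind-eind-ind}) is only available when the host local digraph $H$ is an LDAG, since the sparse realization hinges on a topological sort. So you must also check that your iterated blow-ups $D_\ell$ are acyclic; in the paper this is \cref{lem:LDAG-blowup}, proved by concatenating topological sorts, and your phrase "compatibly oriented copy of $D_{\ell-1}$" does not address it. In short: the recursion in your step 2 is essentially the paper's argument, but the reduction in step 1 is asserted rather than proved, its mechanism is misidentified (acyclicity via uniqueness, not well-definedness of orientations), and the acyclicity of the blow-up hosts, needed for the reduction to apply to them at all, is never verified.
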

Note that in this case, $\alpha^*(G)=k$ and $e(G)\geq k$, showing that this is indeed always an improvement whenever the theorem applies.

In addition, for concrete graphs that satisfy the assumption, we may apply the generalization of \cref{thm:hardness-result} to have a fairly good control on the edge inducibility.
Instead of stating the most general result, we state some key examples that we deal with in the paper.

\begin{theorem}\label{thm:local-digraph-small-case}
    Let $K_3^+$ be the graph obtained by attaching an edge to one of the vertices of a triangle $K_3$.
    Let $P_k$ be a path on $k$ vertices.
    Then
    \[\eind(K_3^+)=\eind(P_4)=\frac{1}{4}\]
    and
    \[\frac{5}{372}\leq \eind(P_6)\leq \frac{1}{36}.\]
\end{theorem}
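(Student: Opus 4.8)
The plan is to set up the "locally directed graph" machinery referenced in the excerpt and then specialize it. Let me think about what's going on. $K_3^+$ has a perfect matching on 4 vertices: the triangle is on vertices $\{a,b,c\}$ and pendant $d$; the edge $cd$ is in the matching and... wait, $K_3^+$ has 4 vertices and 4 edges. A perfect matching: $\{ab, cd\}$ where $c$ is the vertex of the triangle attached to $d$. Actually the triangle is $abc$, and $d$ is attached to (say) $c$. So edges are $ab, bc, ca, cd$. Perfect matching $\{ab, cd\}$, size $k=2$. Is it the unique fractional perfect matching? A fractional perfect matching assigns weights to edges summing to 1 at each vertex. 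At $d$: weight on $cd$ must be 1. Then at $c$: weights on $ca, cb, cd$ sum to 1, so $ca, cb$ get weight 0. At $a$: $ab + ac = 1$ so $ab = 1$. At $b$: similar. So yes, unique. Good. $P_4 = a-b-c-d$: matching $\{ab, cd\}$; at $a$, $ab=1$; then $bc=0$; then $cd=1$. Unique, $k=2$. $P_6$: similarly unique perfect matching, $k=3$.

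First I would invoke the generalization of Theorem 1.3 (stated in Section 5 per the excerpt) which, for a graph $G$ with a perfect matching $M$ of size $k$ that is the unique fractional perfect matching, relates $\eind(G)/|\Aut(G)|$ to the vertex inducibility of the locally directed graph $\vec{G}$ obtained by contracting/orienting along $M$ — each matching edge becomes a "super-vertex" carrying the local structure, and edges of $G$ between matching edges induce the locally directed adjacencies. For $K_3^+$ and $P_4$, the resulting locally directed graph on $k=2$ super-vertices should turn out to be the same small object (this is why the two values coincide at $\tfrac14$), and the relevant inducibility computation on two super-vertices is elementary — the extremal configuration is the iterated blowup, giving the lower bound $\tfrac14$ via Theorem 1.4 (with $k=2$: $\frac{|\Aut(G)|}{2^2(2^2-2)} = \frac{|\Aut(G)|}{8}$, and then $\eind(G) \ge \tfrac18 \cdot |\Aut(G)|$; with $|\Aut(K_3^+)|=2$ and $|\Aut(P_4)|=2$ this gives $\tfrac14$). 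The matching upper bound $\tfrac14$ requires showing no locally directed graph on two super-vertices beats the blowup; I would do this by a direct convexity/LP argument on the 2-vertex density profile, exactly analogous to the known proof that $\ind(P_3)$-type two-vertex problems are solved by blowups — this is the part to write carefully but it is routine.

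For $P_6$ ($k=3$) the locally directed graph $\vec{P_6}$ lives on 3 super-vertices with a specific "path-like" locally directed structure. The lower bound $\tfrac{5}{372}$ should come from an explicit construction: one takes a near-optimal (not necessarily iterated-blowup) weighting/construction on the 3-super-vertex locally directed graph, plugs it into the generalized hardness correspondence, and optimizes; the denominator $372$ and numerator $5$ will emerge from solving a small finite optimization (likely a low-dimensional LP or a rational optimum of a polynomial in a couple of blowup-ratio parameters). The upper bound $\tfrac1{36}$ — note $\frac{|\Aut(P_6)|}{2^3(3^3-3)} = \frac{2}{8\cdot 24} = \frac{1}{96}$, which is the pure-iterated-blowup bound and is \emph{smaller} than $\tfrac1{36}$, consistent with $P_6$ not being blowup-optimal — I would obtain by a more clever but still non-computer-assisted argument bounding the vertex inducibility of $\vec{P_6}$ on 3 super-vertices: partition an extremal host according to which super-vertex each part "belongs" to, bound the contribution of induced copies crossing the three parts by a product/entropy estimate, and feed back through the correspondence.

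The main obstacle I expect is the $P_6$ upper bound $\tfrac1{36}$: unlike the $k=2$ cases, the three-super-vertex locally directed inducibility does not reduce to a one-dimensional convexity argument, and one must carefully track the interaction between the three "orientation classes" of the locally directed graph while avoiding flag algebras. I would attempt this by a stability-flavored case analysis — first show an approximately extremal configuration must split into three dominant parts aligned with the super-vertices, then optimize the residual cross-terms by hand — and the bookkeeping of which locally directed patterns can coexist is where the real work lies. The $P_6$ lower bound, by contrast, is "just" exhibiting the right construction and checking its value, so I would do that first to pin down the target constant $\tfrac{5}{372}$ and guide the search for a matching upper bound.
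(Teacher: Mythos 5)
Your framework is the right one: both bounds do go through the paper's generalization (Theorem \ref{thm:aind-eind-ind}), and your treatment of $K_3^+$ and $P_4$ essentially works --- the lower bound $1/4$ follows from \cref{thm:better-construction-intro} with $k=2$, and the upper bound is in fact trivial once you are in the local-digraph picture, since $\ind$ of any $2$-vertex local digraph is at most $1$, giving $\tfrac{2^2\cdot 2!}{|\Aut(G)|}\eind(G)\le 1$; no convexity/LP argument is needed. One factual slip: $\ldg(P_4,M)$ and $\ldg(K_3^+,M)$ are \emph{not} the same object ($P_4$ yields a single locally directed edge, while $K_3^+$ yields two parallel edges between the two super-vertices with different sign patterns); the values coincide because both $2$-vertex local digraphs have acyclic inducibility equal to $1$, not because the local digraphs agree.

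The genuine gap is the $P_6$ case, where both stated bounds are left as plans rather than proofs. For the lower bound, the constant $\tfrac{5}{372}$ does not come out of a ``low-dimensional LP'': via $\tfrac{2^3\cdot 3!}{|\Aut(P_6)|}=24$ it is equivalent to $\aind(LDP_3)\ge \tfrac{10}{31}$, and you must exhibit an explicit \emph{acyclic} local-digraph host --- the acyclicity requirement, which you never mention, is exactly what the lower-bound direction of \cref{thm:aind-eind-ind} needs. The paper's construction is a specific LDAG $G_k$ on $4k$ vertices (a half-graph-type structure on $v_1,\dots,v_{2k}$ together with two blown-up vertices $u_1,u_2$) achieving induced-$LDP_3$ density $\tfrac{5}{16}$, followed by a recursive insertion of the construction into the copies of $u_1$ and $u_2$, giving $\tfrac{5}{16}\sum_{k\ge 0}(2/4^3)^k=\tfrac{10}{31}$; none of this is recoverable from your sketch. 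For the upper bound, $\tfrac{1}{36}$ is equivalent to $\ind(LDP_3)\le \tfrac23$, and this must hold for \emph{all} local-digraph hosts, not approximately extremal ones; your stability-flavored plan of splitting an extremal host into three parts ``aligned with the super-vertices'' is not a proof and is unlikely to be completable as stated (the extremal hosts are not known, and $2/3$ is probably not even the truth --- the paper's best construction gives only $2/5$). The paper instead proves $\ind(LDP_3)\le\tfrac23$ by a short per-vertex double count: with $A$ the number of triples carrying a $+/-$ cherry at the middle vertex and $B$ the number of (edge, non-edge) triples at a vertex, one has $N_{\ind}(LDP_3,H)\le\min\{A,\tfrac12 B\}\le\tfrac13 A+\tfrac13 B\le \tfrac{n^3}{9}$. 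Without these two concrete arguments, the $P_6$ part of the theorem is unproved in your proposal.
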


In addition to small graphs and paths, we also consider cycles in this paper.
The bound in \eqref{eq: trivial bounds on edge inducibility} gives $\eind(C_k) \ge 2k\left(\frac{1}{2k}\right)^{k/2}$, which comes from counting induced copies in the balanced blowup of $C_k$. 
We conjecture that this lower bound is in fact tight whenever $k\geq 4$. 
\begin{conjecture}\label{conj:cycle}
	For every integer $k\geq 4$, we have $\eind(C_k) = 2k\left(\frac{1}{2k}\right)^{k/2}$.
\end{conjecture}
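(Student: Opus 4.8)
The plan is to prove the matching upper bound $N_{\ind}(C_k,H)\le(1+o(1))(m/k)^{k/2}$ for every graph $H$ with $m$ edges, as $m\to\infty$; the lower bound is already contained in \cref{eq: trivial bounds on edge inducibility} and is witnessed by the balanced blowup $C_k[t]$ with $t=\sqrt{m/k}$. For $k\ge5$ the graph $C_k$ has no twin vertices, so the induced copies of $C_k$ in $C_k[t]$ are exactly its $t^k$ transversals, giving $N_{\ind}(C_k,C_k[t])=(m/k)^{k/2}$ and hence $\eind(C_k)\ge 2k(2k)^{-k/2}$. (The case $k=4$ is special: $C_4=K_{2,2}$ has twin vertices, so its induced copies in a blowup are not only transversals, and its edge inducibility is the balanced-complete-bipartite case studied by Bollob\'as, Nara, and Tachibana.) Note that \cref{thm: Alon} already yields the correct order $m^{k/2}$; the entire difficulty is the leading constant, i.e.\ improving the trivial upper constant by the multiplicative factor $(2k)^{k/2-1}$, and this is exactly the regime in which soft entropy/linear-programming arguments no longer suffice.

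For the upper bound I would study the entropy of a uniformly random induced copy. Fix $H$, let $\Phi$ be the set of injective maps $\ZZ/k\to V(H)$ preserving both edges and non-edges, and let $X=(X_0,\dots,X_{k-1})$ be uniform on $\Phi$, so $H(X)=\log\bigl(2k\cdot N_{\ind}(C_k,H)\bigr)$. Because the cover $\{\{i,i+1\}\}_{i\in\ZZ/k}$ uses each vertex of $C_k$ twice, Shearer's inequality gives $H(X)\le\tfrac12\sum_i H(X_i,X_{i+1})\le\tfrac k2\log(2m)$, which is the inequality behind \cref{eq: trivial bounds on edge inducibility}; the conjecture is equivalent to sharpening it by an additive $(\tfrac k2-1)\log(2k)-o(1)$, precisely the deficit exhibited already by $H=C_k$ and by $H=C_k[t]$. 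To locate that slack I would expand $H(X)$ by the chain rule along $X_0,\dots,X_{k-1}$ and use that the cycle is both closed and induced: each $X_{i+1}$ is a neighbor of $X_i$ that avoids $X_0,\dots,X_{i-1}$ together with the neighborhoods induced-ness forbids, and $X_{k-1}$ must in addition be a common neighbor of $X_{k-2}$ and $X_0$; recording these as conditional-entropy bounds $H(X_{i+1}\mid X_0,\dots,X_i)\le\EE\log\sabs{S_i}$ for the explicit set $S_i$ of admissible extensions. Averaging these estimates over the $k$ rotations and the two reflections of the base point ---legitimate since $\Phi$ is $\Aut(C_k)$-invariant--- symmetrizes the per-coordinate costs (including the otherwise problematic base term $H(X_0)$) and reduces the whole bound to a finite convex optimization over the \emph{local profile} of $H$: the joint law, for a uniformly random oriented edge $(U,W)$, of $\deg U$, $\deg W$, $\sabs{N(U)\cap N(W)}$, and the relevant codegrees. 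Since $C_k$ is vertex- and edge-transitive, the cyclic symmetrization should force the extremizer of the relaxed program to be the symmetric profile, namely that of $C_k[t]$, which satisfies all the estimates with equality and yields exactly $(1+o(1))(m/k)^{k/2}$.

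The main obstacle ---and the reason this is still only a conjecture--- is upgrading ``the symmetric profile extremizes the relaxation'' to ``graphs $H$ with nearly the extremal count genuinely resemble $C_k[t]$''. The convex relaxation will admit asymmetric near-optima, and excluding them requires a real stability analysis on $H$ itself: one would want to show that the relation ``$(u,v)$ lies in many induced copies of $C_k$'' is approximately a blowup of a single arc of $C_k$, deduce that almost all of $E(H)$ is partitioned into $k$ nearly equal cyclic parts, and rerun the count robustly on such $H$. Crucially, $C_k$ falls outside the scope of \cref{thm:hardness-result} and its generalization ---for even $k$ the alternating matchings span a whole polytope of fractional perfect matchings, and for odd $k$ there is no perfect matching at all--- so the locally-directed-graph machinery supplies no rigidity for free, and this stability step has to be built from scratch. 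For fixed small $k$ the finite optimization could instead be discharged by a flag-algebra computation, as in the resolution of the $C_5$ inducibility, but the clean non-computer-assisted statement of \cref{conj:cycle} for all $k$ requires the symmetrization-plus-stability route to be made unconditional; the even case, where induced copies of $C_k$ can be organized around their two perfect matchings, is a natural first target, although even there the naive such count is off by a constant factor and still needs the same structural input.
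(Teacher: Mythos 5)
This statement is \cref{conj:cycle}, which the paper leaves open: there is no proof of it in the paper to compare your argument against. The paper's only results in its direction are the verification for $k=4$ in \cref{section: graphs smaller than 4} (an elementary double count via \cref{claim:upper-bound}, using that $K_{t,t}$ has $t!$ perfect matchings) and the partial bound $\eind(C_5)\leq \tfrac{1}{\sqrt{250}}$ of \cref{thm:C5}, proved by the chain rule plus the mixture bound (\cref{prop:mix}), which is still a multiplicative factor of $2$ away from the conjectured $\tfrac{1}{\sqrt{1000}}$. Your side remark about $k=4$ is well taken and worth making explicit: the balanced blowup of $C_4$ is $K_{2t,2t}$, its induced copies are not only transversals, and indeed $\eind(C_4)=\tfrac12$ (\cref{tab:small-summary}) whereas the displayed constant $2k(1/(2k))^{k/2}$ equals $\tfrac18$ at $k=4$; correspondingly your target upper bound $N_{\ind}(C_k,H)\leq(1+o(1))(m/k)^{k/2}$ is false for $k=4$ and can only be the goal for $k\geq 5$.

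As a proof attempt, however, your write-up has a genuine gap, which you yourself name: everything beyond the lower bound is a plan, not an argument. The only inequality actually established is the Shearer/uniform bound $\HH(X)\leq\tfrac{k}{2}\log_2(2m)$, which merely reproduces \cref{eq: trivial bounds on edge inducibility} and hence the trivial constant; the entire content of the conjecture is the gain of the factor $(2k)^{k/2-1}$, and the steps meant to produce it---the conditional-entropy bounds on the sets of admissible extensions, the averaging over $\Aut(C_k)$ to reduce to a finite convex program over local edge profiles, the identification of the symmetric profile as the optimizer, and above all the stability argument upgrading near-optimal profiles of $H$ to an approximate blowup structure---are described but not carried out. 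Note also that the paper's $C_5$ argument is exactly this kind of chain-rule analysis sharpened by the mixture bound, and it currently loses a factor of $2$ even in that single case; moreover, as you observe, $C_k$ lies outside the scope of \cref{thm:aind-eind-ind} (no unique fractional perfect matching), so the locally directed machinery gives no extra rigidity. So your proposal is a reasonable research program consistent with the paper's partial results, but it does not prove the statement, which remains a conjecture.
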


Towards this, in Section \ref{section: graphs smaller than 4} we confirm the conjecture for $k=4$ (which was also done implicitly in both \cite{BolNarTac1986,Coh2024}). Moreover, using the entropy method in a delicate way, inspired by the recent work of the first and last author \cite{CY24turan}, we prove the following bound in the case of $k=5$, which is far by a multiplicative factor of $2$ from the conjectured lower bound $\frac{1}{\sqrt{1000}}$.

\begin{theorem}\label{thm:C5}
	We have $\eind(C_5)\leq \frac{1}{\sqrt{250}}$.  
\end{theorem}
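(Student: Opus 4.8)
The plan is to bound $\eind(C_5,m)$ by an entropy argument. Since $\alpha^*(C_5)=\tfrac52$ and $\abs{\Aut(C_5)}=10$, the statement is equivalent to showing that every graph $H$ with $m$ edges satisfies $N_{\ind}(C_5,H)\le(1+o(1))\,(2m)^{5/2}/(10\sqrt{250})$. Fix such an $H$ and let $\mathbf X=(X_1,\dots,X_5)$ be a uniformly random \emph{ordered} induced copy of $C_5$ in $H$, where the reference $C_5$ has its vertices labelled $1,\dots,5$ cyclically. The number of ordered induced copies equals $\abs{\Aut(C_5)}N_{\ind}(C_5,H)=10\,N_{\ind}(C_5,H)$ and also equals $2^{\HH(\mathbf X)}$, where $\HH(\cdot)$ denotes base-$2$ Shannon entropy; so it suffices to prove $\HH(\mathbf X)\le\tfrac52\log_2(2m)-\tfrac12\log_2 250+o(1)$. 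Throughout, the law of $\mathbf X$ is invariant under the dihedral group $D_5$ acting on coordinates, so any coordinate-symmetric statistic (e.g.\ $\EE[\log_2 d_H(X_i)]$) does not depend on which coordinate it is read off.

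The heart of the matter is a chain-rule expansion, rooted at an edge and grown breadth-first, that exploits the non-edges $X_iX_{i+2}$ of the induced copy. Revealing $\mathbf X$ in the order $\{X_1,X_2\},X_3,X_5,X_4$,
\[
\HH(\mathbf X)=\HH(X_1X_2)+\HH(X_3\mid X_1X_2)+\HH(X_5\mid X_1X_2X_3)+\HH(X_4\mid X_1X_2X_3X_5).
\]
Here $\HH(X_1X_2)\le\log_2(2m)$ since $(X_1,X_2)$ ranges over ordered edges of $H$; and each later conditional entropy is at most the logarithm of the number of vertices consistent with every adjacency \emph{and} non-adjacency of $C_5$ already forced. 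Writing $N[v]=\{v\}\cup N(v)$, this yields $\HH(X_3\mid X_1X_2)\le\EE\log_2|N(X_2)\setminus N[X_1]|=:T_2$, $\HH(X_5\mid X_1X_2X_3)\le\EE\log_2|N(X_1)\setminus(N[X_2]\cup N[X_3])|=:T_3$, and $\HH(X_4\mid X_1X_2X_3X_5)\le\EE\log_2\bigl|(N(X_3)\cap N(X_5))\setminus(N[X_1]\cup N[X_2])\bigr|=:T_4$. On the balanced blow-up of $C_5$ with parts of size $t$ --- the conjectural extremiser, in which every neighbourhood is an independent set of size $2t$ and the two codegrees that govern the savings in $T_3,T_4$ equal $t$ --- the four bounds evaluate to $\log_2(2m),\log_2(2t),\log_2 t,\log_2 t$, whose sum is exactly $\tfrac52\log_2(2m)-\tfrac12\log_2 250$; thus this particular chain rule is, by design, tight against the desired conclusion, while still a factor of $2$ above the true value $10t^5$ of the number of ordered copies --- which is why matching Conjecture~\ref{conj:cycle} would require a finer chain rule or a different idea altogether. (Routing instead through $\HH(\mathbf X)=\HH(X_1X_2X_3X_4)+\HH(X_5\mid X_1X_2X_3X_4)$ and the value $\eind(P_4)=\tfrac14$ from Theorem~\ref{thm:local-digraph-small-case} also works but is strictly weaker on the blow-up, so we do not pursue it.)

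It remains to prove $T_2+T_3+T_4\le\tfrac32\log_2(2m)-\tfrac12\log_2 250+o(1)$ for every $H$, and this is the main obstacle. The delicacy is that $T_2,T_3,T_4$ are, up to the non-edge corrections, logarithms of degrees and codegrees of a $C_5$-weighted random vertex, and there is no a priori bound on the relevant degree moments in terms of $m$ alone (already $\sum_v d_H(v)^2$ is unbounded for a star), so one must genuinely exploit that $H$ is rich in induced copies of $C_5$. After symmetrising over $D_5$, the strategy is to play the non-edge savings in $T_3$ and $T_4$ --- which are controlled by the codegree of the non-edge $X_1X_3$, by the codegree of $X_3X_5$, and by how these neighbourhoods interleave with $N[X_1],N[X_2]$ --- against the degree contributions and against the edge bound $\HH(X_1X_2)\le\log_2(2m)$, using convexity together with an auxiliary Kruskal--Katona / Lov\'asz-type inequality (in the spirit of $c(K_r,\binom{x}{2})\le\binom{x}{r}$ and its entropic proof) to keep every quantity controlled by $m$. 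This is precisely the kind of bookkeeping carried out in \cite{CY24turan}, and it is also where the factor of $2$ is conceded: in the regimes where this trade-off is only handled up to a constant, one loses one bit, replacing the conjectural $\tfrac12\log_2 1000$ by $\tfrac12\log_2 250$. I expect essentially all of the work, and the only genuinely new content, to live in this last step; the chain-rule skeleton above is forced and routine.
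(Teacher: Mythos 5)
There is a genuine gap: you never prove the inequality that carries all the content. Your chain-rule skeleton and the normalization are fine, and your target $T_2+T_3+T_4\le\tfrac32\log_2(2m)-\tfrac12\log_2 250+o(1)$ is indeed consistent with the balanced blow-up, but the proposal stops exactly where the theorem begins. You yourself identify the obstruction (already for a star, $\sum_v d_H(v)^2$ is not controlled by $m$, so pointwise bounds on $\HH(X_3\mid X_1X_2)$ etc.\ by expected log-degrees and log-codegrees cannot be summed against $m$ alone), and then you defer its resolution to unspecified ``convexity plus Kruskal--Katona bookkeeping in the spirit of \cite{CY24turan}.'' No mechanism is given for trading the non-edge savings against the degree terms, and it is not at all clear that one exists along this route: once you replace conditional entropies by $\EE\log_2$ of candidate sets, you have committed to controlling degree/codegree statistics of the $C_5$-weighted vertex distribution, which is precisely the quantity that $m$ does not control.

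The paper's proof avoids this issue entirely by never leaving entropy space. It expands $2\HH(X_1,\dots,X_5)$ by the chain rule and symmetry, and then extracts the constant $\log_2 250=\log_2(2\cdot 5^3)$ from two applications of the mixture bound (\cref{prop:mix}): first, $(X_1,X_2,X_3,X_4)$ and $(X_1,X_2,X_4,X_3)$ have disjoint supports (since $X_2X_3$ is an edge and $X_2X_4$ is not), which after a symmetry argument gives $\HH(X_1\mid X_2,X_3,X_5)\le\HH(X_1\mid X_2)-\log_2 2$; second, conditionally independent resamplings $Y_1,\dots,Y_5$ of $X_i$ given $(X_{i+1},X_{i+2},X_{i+3})$ have mutually disjoint supports, giving $\HH(X_1\mid X_2,X_3,X_4)\le\HH(X_1)-\log_2 5$. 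Every remaining term is then a copy of $\HH(X_1)$ or $\HH(X_1\mid X_2)$, which recombine into $5\HH(X_1,X_2)\le 5\log_2(2m)$ --- no degree or codegree of $H$ is ever bounded separately. If you want to complete your write-up, the missing ingredient is exactly such comparisons between entropies of different random tuples (via disjoint supports and mixtures), not an analytic bound on $T_2,T_3,T_4$ in terms of $m$.
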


We remark that there is a recent independent work by Wang, Zhao and Lu \cite{WanZhaLu2025} that also poses and studies the edge inducibility problem.
Our \cref{conj:cycle} coincides with their Conjecture 1.7, and \cref{thm:local-digraph-small-case} disproves their Conjecture 1.8 for $k=4,6$.
We remark that \cref{thm:better-construction-intro} can be used to disprove their Conjecture 1.8 for all $k\geq 4$, and we make this remark after its proof in \cref{section: local digraph}.

\subsection{Outline of the paper}
We begin in \cref{sec:prelim} by introducing the necessary notation and the Shannon entropy, both of which will be used frequently throughout the paper. We also prove \cref{eq: trivial bounds on edge inducibility} in this section.
Then in \cref{sec: edge ind is hard}, we state a more precise version of \cref{thm:hardness-result} and give a proof.
In \cref{section: local digraph}, we first define locally directed graphs and what it means for them to be acyclic.
We then use the built terminology and basic properties to state and prove the generalization of \cref{thm:hardness-result}, which allows us to prove \cref{thm:better-construction-intro} and also understand the edge inducibility of $P_6$.
In the following section, we include our results about small graphs, including all graphs on at most $4$ vertices and also the cycle of length $5$.
Finally, we close with some remarks and conjectures in \cref{sec:conclusion}.

\section{Preliminaries}\label{sec:prelim}
\subsection{Notations}
Throughout the paper, we use the following standard graph theoretic notations.  For a graph $G$, we write $V(G)$ to denote its vertex set, and write $v(G)$ to denote its number of vertices. Furthermore, we write $E(G)$ for the edge set of $G$ and write $e(G)$ to denote the size of $E(G)$. 

We also use the following standard notions from fractional graph theory. 
Suppose that $G$ is a graph. A function $\alpha\colon V(G)\to [0,1]$ is said to be a \emph{fractional independent set} of $G$ if for every $uv\in E(G)$ we have $\alpha(u)+\alpha(v)\le 1$.
The \emph{fractional independence number} of $G$, denoted by $\alpha^*(G)$, is the maximum of $\sum_{v\in V(G)}\alpha(v)$, running over all fractional independent sets $\alpha$ of $G$.
Lastly, a function $w\colon E(G)\to [0,1]$ is called a \emph{fractional matching} of $G$ if for every $v\in V(G)$, we have $\sum_{u:uv\in E(G)}w(uv)\leq 1$.
If the equality holds at every vertex, then we call it a \emph{fractional perfect matching}.

Finally, we will also use some notations regarding homomorphisms and embeddings when it is more convenient for us.
Let $G,H$ be two graphs.
We say that a function $\varphi\colon V(G)\to V(H)$ is a \emph{homomorphism} from $G$ to $H$ if for every $uv\in E(G)$, we have $\varphi (uv)\eqdef \varphi(u)\varphi(v)\in E(H)$. 
An injective homomorphism $\varphi$ is called an \emph{embedding}. 
If an embedding $\varphi$ from $G$ to $H$ furthermore satisfies $\varphi(uv)\not\in E(H)$ for every $uv\not\in E(G)$, we call $\varphi$ an \emph{induced embedding}. 
We denote by $E_{\ind}(G,H)$ the number of induced embeddings from $G$ to $H$, and note that $E_{\ind}(G,H)=\abs{\Aut(G)} \cdot N_{\ind}(G,H)$.
Let us remark that with this notation, for every graph $G$ we have
\[
	\eind(G) = \limsup_{m\to\infty} \max_{H: e(H)=m}\frac{E_{\ind}(G,H)}{(2m)^{\alpha^*(G)}},
\]
which we sometime use.

\subsection{Shannon entropy}
For any discrete random variable $X$, its support $\supp(X)$ is the set $\{x\mid \PP(X=x)>0\}$. Throughout this paper, every random variable mentioned is discrete and has a finite support, i.e.\ the size of its support is finite. Now, we may introduce the definition of Shannon entropy of a random variable.

\begin{definition}
    Given a random variable $X$, its Shannon entropy is given by
    \[\HH(X)\eqdef \sum_{x\in\supp(X)}-\PP(X=x)\log_2\PP(X=x).\]
    For any sequence of random variables $X_1,\dots,X_k$, the joint entropy $\HH(X_1,\dots,X_k)$ is the entropy of the random tuple $(X_1,\dots,X_k)$. 

    We also define the conditional entropy of $X$ given $Y$, 
    \[\HH(X\mid Y)\eqdef \HH(X,Y)-\HH(Y).\]
\end{definition}

Next, we state some properties that are frequently used in \cref{sec: five cycle}. We refer the readers to \cite{AS00,Gal2014} for more details about entropy. First, we have the uniform bound which helps us connect the number of objects we are counting and the entropy of random variables.

\begin{proposition}[Uniform bound]
    For any random variable $X$, we have
    \[\HH(X)\leq \log_2\abs{\supp(X)},\]
    where equality holds if and only if $X$ is a uniform random variable on $\supp(X)$.
\end{proposition}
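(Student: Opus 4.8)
The plan is to recognize this as a direct consequence of the concavity of the logarithm, i.e.\ of Jensen's inequality. Write $S\eqdef\supp(X)$, $n\eqdef\abs{S}$, and $p_x\eqdef\PP(X=x)$ for $x\in S$, so that $p_x>0$ and $\sum_{x\in S}p_x=1$. The first step is to rewrite the entropy as $\HH(X)=\sum_{x\in S}p_x\log_2(1/p_x)$, exhibiting it as a convex combination (with weights $p_x$) of the values $1/p_x$ plugged into the function $t\mapsto\log_2 t$. Note that because the sum in the definition ranges only over the support, every $p_x$ appearing is strictly positive, so $1/p_x$ is well defined and no $0\log 0$ convention is needed.

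Since $\log_2$ is concave on $(0,\infty)$, Jensen's inequality then gives
\[
\HH(X)=\sum_{x\in S}p_x\log_2\!\left(\frac{1}{p_x}\right)\le\log_2\!\left(\sum_{x\in S}p_x\cdot\frac{1}{p_x}\right)=\log_2\!\left(\sum_{x\in S}1\right)=\log_2 n,
\]
which is exactly the claimed bound. (Alternatively one can invoke the non-negativity of the Kullback--Leibler divergence between the law of $X$ and the uniform distribution on $S$, which repackages the same computation; I would use whichever the paper's preferred toolkit is, but the Jensen formulation is self-contained.)

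For the equality characterization I would use that $\log_2$ is \emph{strictly} concave, so equality in the Jensen step forces the quantities $1/p_x$ to be equal across all $x\in S$ — equivalently, $p_x$ is constant on the support. Together with $\sum_{x\in S}p_x=1$ this yields $p_x=1/n$ for every $x\in S$, i.e.\ $X$ is uniform on its support; conversely, substituting $p_x=1/n$ into the definition gives $\HH(X)=\log_2 n$ immediately. I do not expect any real obstacle here: the only points that deserve a sentence of care are stating Jensen's inequality together with its equality case for strictly concave functions, and noting (as above) that restricting to $\supp(X)$ is what makes all the $p_x$ positive so that the rewriting is legitimate.
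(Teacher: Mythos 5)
Your proof is correct: the rewriting $\HH(X)=\sum_{x\in\supp(X)}p_x\log_2(1/p_x)$, the application of Jensen's inequality for the concave $\log_2$, and the use of strict concavity for the equality case are all sound, and this is the standard argument. The paper itself does not prove this proposition — it states it as a known fact and refers the reader to standard references on entropy — so your self-contained Jensen derivation is exactly the proof one would supply.
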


Next, we have the chain rule, which follows from the telescoping sum of conditional entropy.
\begin{proposition}[Chain rule]
    For any random variables $X_1,\dots,X_k$, we have
    \[\HH(X_1,\dots,X_k)= \HH(X_1)+\HH(X_2\mid X_1)+\dots+\HH(X_k\mid X_1,\dots,X_{k-1}).\]
\end{proposition}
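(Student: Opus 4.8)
The plan is to derive the identity directly from the definition of conditional entropy by a telescoping argument. Recall that the paper sets $\HH(X\mid Y)\eqdef \HH(X,Y)-\HH(Y)$, where a joint entropy is the entropy of the corresponding random tuple. The first step is to record the obvious specialization of this definition: for each $i\in\{2,\dots,k\}$, apply it with $X=X_i$ and $Y=(X_1,\dots,X_{i-1})$, and use that the tuple $\big((X_1,\dots,X_{i-1}),X_i\big)$ has the same distribution (up to relabeling its support) as $(X_1,\dots,X_i)$, hence the same entropy. This yields
\[\HH(X_i\mid X_1,\dots,X_{i-1})=\HH(X_1,\dots,X_i)-\HH(X_1,\dots,X_{i-1}).\]

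Next I would sum this identity over $i=2,\dots,k$ and add $\HH(X_1)$ to both sides. On the right-hand side the sum $\sum_{i=2}^k\big(\HH(X_1,\dots,X_i)-\HH(X_1,\dots,X_{i-1})\big)$ telescopes to $\HH(X_1,\dots,X_k)-\HH(X_1)$, so after adding $\HH(X_1)$ the right-hand side collapses to $\HH(X_1,\dots,X_k)$, while the left-hand side is exactly $\HH(X_1)+\HH(X_2\mid X_1)+\dots+\HH(X_k\mid X_1,\dots,X_{k-1})$. This is the claimed equation.

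There is essentially no obstacle here; the only points worth spelling out are the harmless identification of $\big((X_1,\dots,X_{i-1}),X_i\big)$ with $(X_1,\dots,X_i)$ so that ``joint entropy'' is unambiguous, and the degenerate case $k=1$, where the statement reads $\HH(X_1)=\HH(X_1)$. If one prefers an argument that avoids even this identification, one can instead compute directly: expand $\HH(X_1,\dots,X_k)$ from the definition, factor each joint probability as $\prod_{i=1}^k\PP(X_i=x_i\mid X_1=x_1,\dots,X_{i-1}=x_{i-1})$, split the logarithm of the product into a sum of logarithms, and regroup each resulting block of terms to recognize it as $\HH(X_i\mid X_1,\dots,X_{i-1})$. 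The telescoping version is cleaner, so that is the one I would present.
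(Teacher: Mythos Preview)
Your proposal is correct and matches the paper's approach exactly: the paper does not give a detailed proof but simply remarks that the chain rule ``follows from the telescoping sum of conditional entropy,'' which is precisely the argument you wrote out.
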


We also need the following two inequalities about entropy.
\begin{proposition}[Dropping conditioning]
    For any random variables $X,Y,Z$, we have
    \[\HH(X\mid Y,Z)\leq\HH(X\mid Y).\]
\end{proposition}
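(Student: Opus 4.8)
The plan is to prove this by a direct computation in terms of the underlying joint distribution, reducing the inequality to Gibbs' inequality (nonnegativity of relative entropy), which in turn follows from the elementary bound $\ln t\le t-1$. First I would unfold both sides using the definition $\HH(X\mid Y)=\HH(X,Y)-\HH(Y)$ together with the Shannon entropy formula, writing everything in terms of the joint law $p$ of $(X,Y,Z)$ and its marginals. A short manipulation gives $\HH(X\mid Y)=\sum_{x,y,z}-p(x,y,z)\log_2\frac{p(x,y)}{p(y)}$ and $\HH(X\mid Y,Z)=\sum_{x,y,z}-p(x,y,z)\log_2\frac{p(x,y,z)}{p(y,z)}$, so that
\[
    \HH(X\mid Y)-\HH(X\mid Y,Z)=\sum_{x,y,z}p(x,y,z)\log_2\frac{p(x,y,z)\,p(y)}{p(x,y)\,p(y,z)}.
\]

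Next I would group this sum according to the value $y$ of $Y$. For each $y\in\supp(Y)$, factoring out $\PP(Y=y)=p(y)$ turns the inner sum over $(x,z)$ into $\sum_{x,z}a_{x,z}\log_2\frac{a_{x,z}}{b_{x,z}}$, where $a_{x,z}=\PP(X=x,Z=z\mid Y=y)$ is the conditional joint law and $b_{x,z}=\PP(X=x\mid Y=y)\,\PP(Z=z\mid Y=y)$ is the product of conditional marginals; both are probability distributions on $\supp(X\mid Y=y)\times\supp(Z\mid Y=y)$. Applying $\log_2 t\le(t-1)/\ln 2$ with $t=b_{x,z}/a_{x,z}$ over the set where $a_{x,z}>0$ (on which $b_{x,z}>0$ as well) yields $\sum_{x,z}a_{x,z}\log_2\frac{a_{x,z}}{b_{x,z}}\ge-\frac{1}{\ln 2}\sum_{x,z\colon a_{x,z}>0}(b_{x,z}-a_{x,z})\ge 0$, the last step because $\sum_{a_{x,z}>0}a_{x,z}=1\ge\sum_{a_{x,z}>0}b_{x,z}$. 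Since each inner sum is nonnegative and $\PP(Y=y)\ge 0$, the full difference is nonnegative, which is exactly the assertion. (Conceptually this is just the nonnegativity of the conditional mutual information of $X$ and $Z$ given $Y$, equivalently conditional subadditivity of entropy, but the above makes it self-contained.)

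The only genuine subtlety, and the step I would be most careful with, is the bookkeeping around zero-probability outcomes: I must verify that whenever $a_{x,z}>0$ all denominators appearing ($p(x,y)$, $p(y,z)$, $b_{x,z}$) are strictly positive so that no undefined logarithm occurs, and that the terms with $a_{x,z}=0$ contribute $0$ under the usual convention $0\log_2 0=0$; restricting every sum to the relevant support handles this cleanly. Everything else is routine rearrangement of finite sums. I would also record the one-line justification of $\ln t\le t-1$ (the function $t\mapsto t-1-\ln t$ is nonnegative with minimum $0$ at $t=1$), since it is the sole analytic input and has not yet appeared in the paper.
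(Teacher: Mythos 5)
Your proof is correct. Note that the paper does not actually prove this proposition: it is listed among the standard properties of Shannon entropy in the preliminaries, with the reader referred to the cited entropy references, so there is no in-paper argument to compare against. Your computation is the standard textbook one — rewriting $\HH(X\mid Y)-\HH(X\mid Y,Z)$ as the conditional mutual information of $X$ and $Z$ given $Y$ and invoking Gibbs' inequality via $\ln t\le t-1$ — and your bookkeeping around zero-probability outcomes (observing that $p(x,y,z)>0$ forces $p(x,y)>0$ and $p(y,z)>0$, so all logarithms are defined) is exactly the care the argument needs. Nothing is missing.
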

\begin{proposition}[Subadditivity]
    For any random variables $X,Y,Z$, we have
    \[\HH(X,Y\mid Z)\leq\HH(X\mid Z)+\HH(Y\mid Z).\]
\end{proposition}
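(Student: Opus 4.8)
The plan is to derive the inequality purely by combining two facts already established above: the chain rule and ``dropping conditioning''. First I would record the conditional form of the chain rule. Applying the (unconditional) chain rule to the triple $(Z,X,Y)$ gives $\HH(Z,X,Y)=\HH(Z)+\HH(X\mid Z)+\HH(Y\mid Z,X)$, while the definition of conditional entropy gives $\HH(X,Y\mid Z)=\HH(X,Y,Z)-\HH(Z)$. Subtracting the latter's right-hand side bookkeeping from the former yields the conditional chain rule
\[
\HH(X,Y\mid Z)=\HH(X\mid Z)+\HH(Y\mid X,Z).
\]

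Next I would bound the second term on the right. By the ``dropping conditioning'' proposition, with the pair $(X,Z)$ in the role of the conditioning variables and $Z$ the smaller conditioning block, we have $\HH(Y\mid X,Z)\leq \HH(Y\mid Z)$. Substituting this into the displayed identity gives
\[
\HH(X,Y\mid Z)=\HH(X\mid Z)+\HH(Y\mid X,Z)\leq \HH(X\mid Z)+\HH(Y\mid Z),
\]
which is exactly the claimed subadditivity.

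There is essentially no obstacle here; the only point needing a little care is that the chain-rule and dropping-conditioning statements as quoted are (respectively) unconditional and conditional on a single block, so one must first massage them into the forms used above — but both manipulations are immediate from the defining identity $\HH(A\mid B)=\HH(A,B)-\HH(B)$. If one instead wanted a proof from scratch without invoking dropping conditioning, the genuine analytic input would be the non-negativity of the conditional mutual information $I(X;Y\mid Z)=\HH(X\mid Z)-\HH(X\mid Y,Z)\geq 0$, which follows from Jensen's inequality applied to the convex function $t\mapsto t\log_2 t$ (equivalently, from non-negativity of Kullback--Leibler divergence); one then observes the algebraic identity $I(X;Y\mid Z)=\HH(X\mid Z)+\HH(Y\mid Z)-\HH(X,Y\mid Z)$ and rearranges.
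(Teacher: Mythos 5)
Your argument is correct: the conditional chain rule $\HH(X,Y\mid Z)=\HH(X\mid Z)+\HH(Y\mid X,Z)$ follows directly from the definition $\HH(A\mid B)=\HH(A,B)-\HH(B)$, and combining it with the paper's dropping-conditioning proposition (applied as $\HH(Y\mid X,Z)\leq\HH(Y\mid Z)$) yields exactly the claimed inequality. The paper itself states this proposition without proof, deferring to standard references, and your derivation is precisely the standard argument it implicitly relies on, so there is nothing to fault.
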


The last proposition we need is the mixture bound. This bound plays an important role in the recent development of proving the Kruskal--Katona theorem and Tur\'an's theorem with the entropy method \cite{CDSY24,ChaYu2024KK,CY24turan}. Here, we only state a simplified version that we need. The full statement and proof can be found in \cite{CY24turan}.

\begin{definition}[Mixture]
    Given random variables $X_1,\dots,X_k$, we say that $Z$ is a mixture of $X_1,\dots,X_k$ if $Z$ is of the form $X_{\mathbf{i}}$, where $\mathbf{i}\in [k]$ is an independent random index. 
\end{definition}
\begin{proposition}[Mixture bound]\label{prop:mix}
    Let $X_1,\dots,X_k$ be random variables with mutually disjoint supports. That is, $\supp(X_i)\cap\supp(X_j)=\varnothing$ holds for any distinct $i,j\in [k]$. Then there exists a mixture $Z$ of $X_1,\dots,X_k$ such that
    \[2^{\HH(X_1)}+\dots+2^{\HH(X_k)}\leq 2^{\HH(Z)}.\]
\end{proposition}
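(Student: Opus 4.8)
The statement to prove is the mixture bound (Proposition \ref{prop:mix}): given random variables $X_1,\dots,X_k$ with pairwise disjoint supports, there exists a mixture $Z = X_{\mathbf{i}}$ such that $2^{\HH(X_1)}+\dots+2^{\HH(X_k)} \le 2^{\HH(Z)}$.

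\medskip

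The plan is to choose the mixing distribution on the index $\mathbf{i}$ cleverly and then compute $\HH(Z)$ directly using the disjointness of supports. First I would set $p_i \eqdef \PP(\mathbf{i}=i)$ and recall that $Z$ is the random variable whose law is $\sum_i p_i \, \mathrm{law}(X_i)$; because the supports $\supp(X_i)$ are mutually disjoint, for $x \in \supp(X_i)$ we simply have $\PP(Z=x) = p_i \PP(X_i = x)$, with no overlap to worry about. Expanding the entropy,
\[
\HH(Z) = \sum_{i} \sum_{x\in\supp(X_i)} -p_i\PP(X_i=x)\log_2\big(p_i\PP(X_i=x)\big) = \sum_i p_i\HH(X_i) + \sum_i p_i \log_2(1/p_i),
\]
using $\sum_{x}\PP(X_i=x)=1$. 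So $2^{\HH(Z)} = \prod_i$ — no: rather $\HH(Z) = \sum_i p_i\big(\HH(X_i) + \log_2(1/p_i)\big)$. The second key step is to choose $p_i$ to maximize this, or at least to make it large enough.

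\medskip

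The natural choice is $p_i \eqdef 2^{\HH(X_i)} / S$ where $S \eqdef \sum_j 2^{\HH(X_j)}$, so that $\HH(X_i) + \log_2(1/p_i) = \HH(X_i) + \log_2 S - \HH(X_i) = \log_2 S$ is \emph{constant} in $i$. Then
\[
\HH(Z) = \sum_i p_i \log_2 S = \log_2 S = \log_2\Big(\sum_j 2^{\HH(X_j)}\Big),
\]
which gives exactly $2^{\HH(Z)} = \sum_j 2^{\HH(X_j)}$, i.e.\ the inequality holds with equality for this particular choice. I should double-check that $p_i$ is a valid probability distribution (it is, since each $2^{\HH(X_i)} > 0$ — the support is nonempty — and they sum to $S$) and that $\mathbf{i}$ can indeed be taken independent of everything, which is fine since a mixture is defined by first sampling $\mathbf{i}$ and then sampling $X_{\mathbf{i}}$ from its own law.

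\medskip

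I do not expect any serious obstacle here; the only thing requiring mild care is the bookkeeping in the entropy expansion, specifically making sure that the disjoint-support hypothesis is invoked precisely where needed (so that $\PP(Z=x)$ factors cleanly with no cross terms) and that degenerate cases — e.g.\ some $\HH(X_i)=0$, or $k=1$ — are covered, which they are automatically. One could alternatively phrase the argument as: the function $(p_i) \mapsto \sum_i p_i(\HH(X_i)+\log_2(1/p_i))$ is a standard Gibbs-type expression maximized at the stated $p_i$, but since we only need existence of \emph{one} good mixture, exhibiting the weights $p_i \propto 2^{\HH(X_i)}$ and computing is the cleanest route. Hence the proposed proof is essentially: define the weights, expand $\HH(Z)$ via the chain rule or directly, and observe the telescoping that yields $\HH(Z) = \log_2\sum_j 2^{\HH(X_j)}$.
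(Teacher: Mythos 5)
Your proof is correct: with the weights $p_i \propto 2^{\HH(X_i)}$ and the disjoint-support hypothesis, the identity $\HH(Z)=\sum_i p_i\HH(X_i)+\sum_i p_i\log_2(1/p_i)=\log_2\bigl(\sum_j 2^{\HH(X_j)}\bigr)$ holds, giving the bound with equality. The paper itself states the proposition without proof, deferring to the cited reference, and your argument is essentially the standard one given there, so nothing further is needed.
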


\subsection{Simple bounds on the edge inducibility}

We conclude this section with the following lemma, which as mentioned in the introduction, is a simple corollary of Theorem \ref{thm: Alon}.

\begin{lemma}\label{lem:simple bound}
    For every graph $G$ without isolated vertices and a nonnegative integer $m$ we have
    \[
        (1-o_G(1))\left(\frac{m}{e(G)}\right)^{\alpha^*(G)}\le \eind(G,m)\le \frac{(2m)^{\alpha^*(G)}}{\abs{\Aut(G)}}.
    \]
\end{lemma}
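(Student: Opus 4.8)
The plan is to deduce both inequalities directly from \cref{thm: Alon}, which gives the corresponding bounds for $c(G,m)$, the maximum \emph{non-induced} copy count. The upper bound is immediate: for any $m$-edge graph $H$ we trivially have $N_{\ind}(G,H)\le N(G,H)\le c(G,m)$, and the right-hand bound $c(G,m)\le (2m)^{\alpha^*(G)}/\abs{\Aut(G)}$ from \cref{thm: Alon} passes through verbatim after taking the supremum over $H$. So the substance is the lower bound.

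For the lower bound, I would take the extremal (or near-extremal) host graph $H^*$ furnished by the lower bound of \cref{thm: Alon}, so that $N(G,H^*)\ge (1-o_G(1))(m/e(G))^{\alpha^*(G)}$ with $e(H^*)=m$. The issue is that these copies of $G$ in $H^*$ need not be induced. The standard fix is to pass to a random induced subgraph: sample a set $S\subseteq V(H^*)$ by including each vertex independently with probability $p$, let $H'=H^*[S]$, and observe that each fixed copy of $G$ survives (all $v(G)$ vertices chosen) with probability $p^{v(G)}$, so $\EE[N(G,H')]\ge p^{v(G)}N(G,H^*)$. This does not yet make the copies induced either. The cleaner route that I expect the authors to use is instead a \emph{blowup} construction: take $H^*$ and replace each vertex by an independent set, or rather build the $t$-blowup and then use that a copy of $G$ in the blowup landing on $v(G)$ distinct branch-vertices forming a copy of $G$ in $H^*$, with the internal non-edges automatically absent, is induced — but blowing up changes the edge count and may not respect $\alpha^*$ exactly.

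Given that the lemma only claims the same $(1-o_G(1))(m/e(G))^{\alpha^*(G)}$ bound that already appears in \cref{thm: Alon} for $c(G,m)$, and given the phrasing ``this is an immediate corollary,'' I suspect the intended argument is even simpler: the lower bound constructions achieving $c(G,m)$ in Alon's theorem — weighted blowups of $G$ itself — actually produce \emph{induced} copies, so the very same construction witnesses the lower bound for $\eind(G,m)$. Concretely, one takes a blowup of $G$ along the optimal fractional independent set weights (the construction behind the lower bound in \cref{thm: Alon}): assign each vertex $v$ of $G$ a part of size proportional to a power reflecting $\alpha(v)$, put a complete bipartite graph between parts corresponding to edges of $G$ and nothing between parts corresponding to non-edges. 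Then a transversal picking one vertex from each part spans an \emph{induced} copy of $G$, because non-adjacent pairs in $G$ map to parts with no edges between them. Counting these transversals reproduces the $(1-o_G(1))(m/e(G))^{\alpha^*(G)}$ lower bound, and since they are induced this bounds $\eind(G,m)$ from below.

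I would therefore structure the proof as: (i) state the upper bound via $N_{\ind}\le N$ and \cref{thm: Alon}; (ii) recall the lower-bound construction in the proof of \cref{thm: Alon} — the appropriately weighted blowup of $G$ — and note explicitly that every transversal copy it counts is induced, not merely a homomorphic/subgraph copy; (iii) conclude that the same lower bound $(1-o_G(1))(m/e(G))^{\alpha^*(G)}$ holds for $\eind(G,m)$. The main obstacle is purely expository: one must be careful that the blowup construction realizing Alon's lower bound indeed has \emph{no} edges between parts of non-adjacent vertices of $G$ (rather than, say, being a quasirandom or tensor-power construction where spurious edges could appear), so that the counted copies are genuinely induced; if the construction in the cited form of \cref{thm: Alon} is stated only for subgraph counts, a one-line remark identifying the explicit extremal weighted blowup of $G$ and its edge count $m' = (1+o(1))\,m$ suffices to close the gap.
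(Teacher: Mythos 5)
Your proposal is correct and matches the paper's argument: the upper bound via $N_{\ind}\le N\le c(G,m)$ together with \cref{thm: Alon}, and the lower bound via the explicit weighted blowup of $G$ with parts of size about $(m/e(G))^{\alpha(v)}$, edges only between parts corresponding to edges of $G$, so that every transversal spans an induced copy and the edge count stays at most $m$. The paper simply writes out this blowup directly (using floors to keep $e(G^*)\le m$ exactly) rather than invoking Alon's construction by reference, but the substance is identical.
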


\begin{proof}
    Note that $\eind(G,m)\leq c(G,m)$ always, and thus Theorem \ref{thm: Alon} implies the required upper bound.
    For the lower bound, let $\alpha\colon V(G) \to [0,1]$ be a fractional independence set achieving $\alpha^*(G)$. Let $(V_v:v\in V(G))$ be a sequence of mutually disjoint sets satisfying $\abs{V_v}= \left\lfloor\left({m}/{e(G)}\right)^{\alpha(v)}\right\rfloor$. Define $G^*$ to be the graph with vertex set $\bigcup_{v\in V(G)} V_v$ and edge set consisting of all pairs $xy$, where $x\in V_v$ and $y\in V_u$ such that $uv\in G$. Note that 
    \[
        e(G^*) = \sum_{uv\in E(G)} \abs{V_v}\abs{V_u} \le \sum_{uv\in E(G)} \left(\frac{m}{e(G)}\right)^{\alpha(v)+\alpha(u)} \leq m.
    \]
    Moreover, 
    \[
        N_{\ind}(G,G^*) \geq \prod_{v\in V(G)} \abs{V_v} =(1-o_G(1))  \left(\frac{m}{e(G)}\right)^{\alpha^*(H)}.
    \]
    This implies the required lower bound.
\end{proof}

By definition, we immediately have the following corollary.

\begin{corollary}\label{cor:simple bound}
    For every graph $G$ without isolated vertices,
    \[\frac{\abs{\Aut(G)}}{(2e(G))^{\alpha^*(G)}}\leq \eind(G,M)\leq 1.\]
\end{corollary}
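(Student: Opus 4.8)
The plan is to deduce this directly from \cref{lem:simple bound} by unwinding the definition
\[\eind(G)=\limsup_{m\to\infty}\frac{\abs{\Aut(G)}\,\eind(G,m)}{(2m)^{\alpha^*(G)}}\]
and passing to the limit term by term; note that the quantity written $\eind(G,M)$ in the statement is exactly this constant $\eind(G)$, which is why the corollary follows ``by definition'' from the lemma.

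First, for the upper bound, I would use the right-hand inequality of \cref{lem:simple bound}, namely $\eind(G,m)\le (2m)^{\alpha^*(G)}/\abs{\Aut(G)}$, which rearranges to say that every term $\abs{\Aut(G)}\,\eind(G,m)/(2m)^{\alpha^*(G)}$ appearing in the $\limsup$ is at most $1$; hence $\eind(G)\le 1$.

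Next, for the lower bound, I would substitute the left-hand inequality $\eind(G,m)\ge (1-o_G(1))(m/e(G))^{\alpha^*(G)}$ into the same expression, so that
\[\frac{\abs{\Aut(G)}\,\eind(G,m)}{(2m)^{\alpha^*(G)}}\ \ge\ (1-o_G(1))\cdot\frac{\abs{\Aut(G)}}{(2e(G))^{\alpha^*(G)}},\]
using the identity $(2m)^{-\alpha^*(G)}(m/e(G))^{\alpha^*(G)}=(2e(G))^{-\alpha^*(G)}$. Taking $\limsup$ as $m\to\infty$ and observing that the $o_G(1)$ error vanishes for fixed $G$ then yields $\eind(G)\ge \abs{\Aut(G)}/(2e(G))^{\alpha^*(G)}$, which together with the previous paragraph completes the proof.

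There is essentially no obstacle here: the statement is a one-line consequence of \cref{lem:simple bound}. The only point worth flagging is that the $o_G(1)$ term in \cref{lem:simple bound} originates from the floor functions in the blowup $G^*$ constructed in its proof, and since $G$ — and hence $e(G)$ and $\alpha^*(G)$ — is held fixed while $m\to\infty$, this error genuinely tends to $0$ and therefore does not affect the limit.
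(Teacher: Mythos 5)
Your proposal is correct and matches the paper, which states the corollary as an immediate consequence of \cref{lem:simple bound} and the definition of $\eind(G)$; your write-up simply spells out that one-line deduction (including correctly reading $\eind(G,M)$ in the statement as the constant $\eind(G)$). The handling of the $o_G(1)$ term and the algebraic identity $(m/e(G))^{\alpha^*(G)}/(2m)^{\alpha^*(G)}=(2e(G))^{-\alpha^*(G)}$ is exactly what the paper leaves implicit.
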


\section{Edge inducibility is harder than vertex inducibility}\label{sec: edge ind is hard}

The starting point for our discussion is the edge inducibility problem for $P_4$, which is notably the only 4-vertex graph for which vertex inducibility in the sense of Pippenger and Golumbic \cite{PipGol1975} is not completely understood. We think of $P_4$ as the union of a matching and an edge; for each of these components (matching and edge) we know the extremal host graph construction (matching and complete graph, respectively). A natural question would be whether we can suitably ``stitch together'' these two constructions into an extremal host graph for $P_4$.

\begin{example}[Edge inducibility for $P_4$]\label{ex:P4}
    One natural construction for the host graph $H$ is to start with a complete graph $K_t$, for $t$ to be chosen, and then attach an $s$-star to each vertex of the $K_t$ (see \cref{fig:P4-host}). Sometimes this graph can be written using the Corona product \cite{FH70} as $K_t \circ \overline{K_s}$.
    In this case, $e(H) = st + \binom{t}{2}$. The number of embeddings of $P_4$ in $H$ can be counted by selecting an edge in the $K_t$ and then choosing a vertex in each of the stars at the endpoints, for a total of $2 \cdot \binom{t}{2} \cdot s^2$. By taking $t \ll s$, and letting $s,t \to \infty$, we can check that $\eind(P_4)\geq \frac{E_{\ind}(P_4,H)}{(2e(H))^2} \to \frac{1}{4}$. 
    
    On the other hand, we may also note that taking every alternate edge of $P_4$ gives a matching on two edges. We may easily upper bound the number of induced embeddings of $P_4$ in the host graph $H$ by the number of ordered pairs of edges, giving $E_{\ind}(P_4,H) \leq m^2-m$. That is, $\eind(P_4) \leq \lim_{m \to \infty} \frac{m^2 - m}{(2m)^2} = \frac{1}{4}$ showing that the lower bound is in fact tight and we have $\eind(P_4) = \frac{1}{4}$.

    \begin{figure}[!htp] \begin{tikzpicture}[
    scale=0.8,
    vertex/.style={circle, fill=black, inner sep=1.6pt},
    leaf/.style={circle, fill=red!70!black, inner sep=1.2pt},
    edge/.style={line width=0.8pt},
    leafedge/.style={draw=red, line width=0.8pt}
  ] \def\t{5}       
\def\s{4}       
\def\R{1.25}    
\def\Rleaf{2.05}
\def\spread{12}

\foreach \i in {1,...,\t}{
  \pgfmathsetmacro{\ang}{90 + 360*(\i-1)/\t}
  \node[vertex] (v\i) at (\ang:\R) {};
}

\foreach \i in {1,...,\t}{
  \pgfmathtruncatemacro{\next}{\i}
  \foreach \j in {\next,...,\t}{
    \draw[edge] (v\i) -- (v\j);
  }
}

\foreach \i in {1,...,\t}{
  \pgfmathsetmacro{\baseang}{90 + 360*(\i-1)/\t}
  \foreach \j in {1,...,\s}{
    \pgfmathsetmacro{\offset}{(\j-(\s+1)/2)*\spread}
    \node[leaf] (u\i-\j) at (\baseang+\offset:\Rleaf) {};
    \draw[leafedge] (v\i) -- (u\i-\j);
  }
}

\node at (270:\R+0.2) {$K_t$};
\node[text=red!70!black] at (90:\Rleaf+0.4) {$s$ vertices};

\end{tikzpicture}
\caption{Host graph for edge inducibility of $P_4$.}
\label{fig:P4-host}
\end{figure}
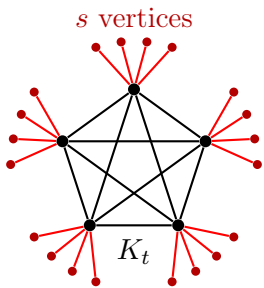
\end{example}

This example suggests why the edge inducibility problem for $P_4$ is more tractable than the vertex inducibility problem for $P_4$, since it is in fact in some sense equivalent to the vertex inducibility problem for an edge. A moment of thought reveals that this same kind of argument shows that for general graphs we expect the edge inducibility problem to be basically at least as difficult as the vertex inducibility problem. Indeed, for any graph $G$, construct $G'$ by attaching a pendant edge to each vertex of $G$. The next theorem shows that the edge inducibility problem for $G'$ can be reduced to the vertex inducibility problem of $G$. 

\begin{theorem}[Edge inducibility is ``as hard as'' vertex inducibility]\label{thm:eind-as-hard-as-ind}
    Let $G$ be a graph on $n$ vertices, and let $G'$ be obtained by attaching a pendant edge to each vertex of $G$. Then
\[ \frac{\ind(G)}{2^n n!}=\frac{ \eind(G')}{|\Aut(G')|} . \]
\end{theorem}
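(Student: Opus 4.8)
The plan is to set up a bijective-type correspondence between induced embeddings of $G'$ into edge-extremal host graphs and induced embeddings of $G$ into vertex-extremal host graphs, matching the two optimization problems up to the stated normalization. First I would record the basic structural facts about $G'$: it has $2n$ vertices, the perfect matching $M$ consisting of the $n$ pendant edges is its unique fractional perfect matching, so $\alpha^*(G') = n$; and $|\Aut(G')| = |\Aut(G)|$ since each automorphism of $G'$ must fix the set of degree-one vertices and hence is determined by its action on the copy of $G$ (note the degree-one vertices of $G'$ are exactly the pendant leaves, as $G$ has no isolated vertices — or one can arrange this; and a leaf's neighbor is forced, so there is no extra freedom). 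These facts pin down the denominators on both sides.

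For the lower bound $\eind(G') \ge \frac{|\Aut(G')|}{2^n n!}\,\ind(G)$, I would take a near-optimal $n$-vertex host $H_0$ with $N_{\ind}(G,H_0) \ge (1-o(1))\ind(G)\binom{n}{v(G)}\cdots$ — more precisely a host on $N$ vertices achieving close to $\ind(G)$ — and build an $m$-edge host $H$ for $G'$ by replacing each vertex $x$ of $H_0$ with $s$ pendant leaves attached to $x$ (a blow-up of the pendant part only), exactly mirroring \cref{ex:P4}. Then $e(H) = e(H_0) + sN$, and an induced embedding of $G'$ into $H$ is obtained by choosing an induced embedding of $G$ into $H_0$ and then, for each of the $n$ image vertices, choosing one of the $s$ leaves hanging off it; one must check no spurious edges appear, which holds because distinct leaves are nonadjacent and a leaf is adjacent only to its own center. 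This gives $E_{\ind}(G',H) \ge E_{\ind}(G,H_0)\cdot s^n$ (with the $s(s-1)\cdots$ corrections being lower-order since we send $s\to\infty$ with $H_0$ fixed, then let $H_0$ grow), and dividing by $(2e(H))^n \sim (2sN)^n$ yields, after taking $H_0$ to be an iterated-blow-up-type or flag-algebra-optimal host, the bound $\eind(G') \ge \frac{E_{\ind}(G,H_0)}{(2N)^n}$; optimizing over $H_0$ and using $\ind(G) = \lim \ind(G,N)/\binom{N}{v(G)}$ together with $N_{\ind}(G,H_0)\le \binom{N}{n}\cdots$ bookkeeping converts this to $\frac{|\Aut(G)|\,\ind(G)}{2^n n!}$.

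For the upper bound I would argue that any $m$-edge host $H$ for $G'$ can be assumed, at the cost of lower-order loss, to have a special structure: since $M$ is the unique fractional perfect matching of $G'$, in an induced embedding $\varphi$ of $G'$ the $n$ "center" images $\varphi(V(G))$ span the copy of $G$ while the $n$ "leaf" images are private pendant-like choices, so counting $E_{\ind}(G',H)$ amounts to summing, over induced copies of $G$ in $H$ on a vertex set $S$, the product over $v\in S$ of the number of available leaf-targets — neighbors of $\varphi(v)$ that are nonadjacent to everything else in the image. Bounding the number of leaf choices at $v$ by $\deg_H(\varphi(v))$ and using that $\sum_x \deg_H(x) = 2m$, an AM–GM / convexity step (exactly the mechanism behind the $(2m)^{\alpha^*}$ normalization in Theorem \ref{thm: Alon} and \cref{lem:simple bound}) shows $E_{\ind}(G',H) \le (1+o(1))\,(2m/n)^n \cdot$ (density of induced $G$-copies in $H$), and the latter density is at most $(1+o(1))\,\frac{\ind(G)}{n!}$ by definition of vertex inducibility applied to the graph on the center-vertices. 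Combining and dividing by $(2m)^n$ gives $\eind(G') \le \frac{\ind(G)}{2^n n!} = \frac{|\Aut(G')|\,\ind(G)}{2^n n!}/|\Aut(G')|$.

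The main obstacle I expect is making the upper-bound reduction fully rigorous: one must show that when counting induced embeddings of $G'$ into an arbitrary $m$-edge $H$, the "leaf" vertices really can be charged to (nearly disjoint) neighborhoods of the "center" vertices with total degree sum $2m$, and that the local choice count at each center behaves like an independent factor — the subtlety being leaves that could be reused or centers whose neighborhoods overlap. This is precisely where the hypothesis that $M$ is the \emph{unique} fractional perfect matching does the work (it forces the $G/M$ bipartition of the embedding to be essentially rigid), and handling the $o(1)$ error terms cleanly — for instance via the entropy/LP-duality formulation underlying Theorem \ref{thm: Alon} rather than a bare-hands union bound — is the technical heart of the argument. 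The lower bound, by contrast, is a direct explicit construction and should be routine.
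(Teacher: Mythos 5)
Your lower bound is exactly the paper's construction (attach $s$ leaves to every vertex of a near-extremal host for $\ind(G)$ and let $s$ grow much faster than the host), and that half is fine.

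The upper bound, however, has a genuine gap. Your key claimed inequality --- that $E_{\ind}(G',H)$ is at most $(1+o(1))\left(\frac{2m}{n}\right)^{n}$ times the density of induced $G$-copies in $H$, with that density then bounded by vertex inducibility --- is false, and it fails already on the extremal host. Take $G=K_2$, $G'=P_4$, and let $H$ be $K_t$ with an $s$-star attached to each vertex, $s\gg t$, so $m\approx st$. Then $E_{\ind}(P_4,H)\approx t^2s^2\approx m^2$, while your right-hand side is $m^{2}$ times the fraction of vertex pairs of $H$ that are edges, which is about $2/(st)=2/m$, giving a bound of order $m$ --- far below the truth. The reason is that the degree-weighted sum $\sum_{\varphi}\prod_{v}\deg_H(\varphi(v))$ over induced embeddings of $G$ concentrates on high-degree vertices, whereas $\ind(G)$ controls the density of induced copies with respect to the \emph{uniform} measure on $V(H)$; the two do not combine multiplicatively, and no AM--GM/convexity step in the spirit of \cref{thm: Alon} bridges them, because the host for the vertex-inducibility comparison must have its vertex count tied to $m$, not to $v(H)$. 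What you would actually need is a degree-biased (or edge-indexed) notion of inducibility bounded by $\ind(G)$, and supplying that is precisely the missing mechanism. The paper's proof avoids the issue by ``pinching'': it builds an auxiliary graph $H_m$ on $m$ vertices, one vertex $v_e$ per edge $e$ of the host, joining $v_e v_f$ when $e,f$ are disjoint and some edge of the host meets both; since the pendant edges of an induced copy of $G'$ form its unique perfect matching, each induced copy of $G'$ injects into an induced copy of $G$ in $H_m$, so $N_{\ind}(G',H')\le N_{\ind}(G,H_m)$ and the normalization $(2m)^n$ converts directly into the vertex-inducibility normalization $\binom{m}{n}$. Nothing in your sketch performs this edge-to-vertex transfer, and your closing sentence only gestures at the difficulty rather than resolving it.

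A secondary but relevant slip: your final display asserts $\eind(G')\le \frac{\ind(G)}{2^n n!}$, which contradicts the theorem (and your own lower bound) whenever $\abs{\Aut(G')}>1$; e.g.\ for $G'=P_4$ it would give $\eind(P_4)\le \frac18$ while $\eind(P_4)=\frac14$. The correct target is $\eind(G')\le \frac{\abs{\Aut(G')}\,\ind(G)}{2^n n!}$, so the automorphism bookkeeping needs to be redone alongside fixing the main counting step.
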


\begin{proof}
    For the upper bound on $\eind(G')$, the main idea is that ``pinching'' the pendant edges of $G'$ in the extremal host graph $H'$ gives $G$. As such, for every host graph $H_m'$ with $m$ edges, that satisfies $N_{\ind}(G',H'_m)=\eind(G',m)$, we construct an auxiliary host graph $H_m$ on $m$ vertices. This is done as follows: each edge $e$ of $H_m'$ corresponds to a vertex $v_e$ of $H_m$, and the edge set of $H_m$ is defined to be $v_ev_f$ for $e,f \in H_m'$ such that $e \cap f = \varnothing$, and there is an edge of $H'_m$ intersecting both $e$ and $f$. This way, every induced copy of $G'$ in $H'_m$ gives rise to an induced copy of $G$ in the host graph $H_m$ that we constructed. By accounting for automorphisms of $G'$ and the fact that $\alpha^*(G) = n$, we get
    \begin{align*}
        \eind(G') &=  \limsup_{m \to \infty}\frac{\abs{\Aut(G')}N_{\ind}(G',H_m')}{(2m)^n}\\
&\leq  \limsup_{m\to\infty}\left(\frac{n!\cdot N_{\ind}(G,H_m)}{m^n}\right)\cdot
\left(\frac{\abs{\Aut(G')} }{2^n n!}\right)\\
&=   \ind(G)\cdot \frac{\abs{\Aut(G')}}{2^{n}n!}.
    \end{align*}
    
    To prove the lower bound on $\eind(G')$, we draw inspiration from Example~\ref{ex:P4}. We start with the extremal host graph $H$ for the vertex inducibility of $G$, and add a star with $s$ vertices to each vertex of $H$ to form the host graph $H'$ for $G'$. More precisely, let $H$ be the extremal host graph for the vertex inducibility of $G$ on $t$ vertices. As in Example~\ref{ex:P4}, for some $s$ to be chosen we define $H' \eqdef H \circ \overline{K_s}$. Now, each induced copy of $G$ in $H$ corresponds to $s^{n}$ many induced copies of $G'$ in $H'$, and the total number of edges in $H'$ is bounded above by $e(H') \leq \binom{t}{2} + st$.  By taking $s \gg t$ so that $s$ grows sufficiently fast relative to $t$, and letting $t \to \infty$, we can bound the number of induced copies of $G'$ in $H'$ to get 
    \[ \mathrm{eind}(G') \geq \abs{\Aut(G')} \cdot \limsup_{t \to \infty}\frac{s^{n} \cdot\ind(G,t)}{\left(2 \cdot \left(\binom{t}{2} + st \right) \right)^{n}} = \frac{\abs{\Aut(G')} \cdot \ind(G)}{2^n n!}, \]
    as desired. 
\end{proof}

\section{Generalization: local digraph} \label{section: local digraph}

In the proof of \cref{thm:eind-as-hard-as-ind}, the strategy to relate the edge inducibility of some graph to the vertex inducibility of some other graph is the following.
We think of edges in a host graph $H$ as vertices in some other auxiliary graph $H'$. 
In addition, for any two edges $e,f\in E(H)$ with $e\cap f=\varnothing$, we may record whether there are any edges going across $e$ and $f$ using edges of $H'$.
However, this transformation going from $H$ to $H'$ is somewhat lossy, as there are actually four possible edges going between $e$ and $f$.
The motivation of considering locally directed graphs is to fully retain information from $H$ when transforming to $H'$ in the hope that we can generalize \cref{thm:eind-as-hard-as-ind} further.
It turns out that with the correct setup, we not only can generalize the upper bound on the edge inducibility in \cref{thm:eind-as-hard-as-ind}, but we can also extend the argument for the lower bound.

\subsection{Locally directed graphs}
In this subsection, we first define locally directed graphs and then define some operations that will be useful later.

A directed graph can be thought of as a graph where for each edge, it is an out-edge at one endpoint and an in-edge at the other endpoint.
If we allow an edge to be an out-edge at each endpoint, or an in-edge at each endpoint, we obtain what we call a \emph{locally directed graph}.
To avoid confusion and for our convenience, we will use plus and minus signs instead to specify the ``direction'' of an edge at a vertex.
\begin{definition}[Locally directed graph]\label{def:local-digraph}
    A \emph{locally directed graph}, or \emph{local digraph}, is a multigraph with no self-loops with the following additional data: for each vertex $v$ and each edge $e$ incident to it, the pair $(v,e)$ has either a positive sign or a negative sign.
    Let $\textup{sgn}(v,e)$ be the sign of $e$ at $v$.
    
    Two local digraphs $G,G'$ are thought to be isomorphic if there are bijections $f = (f_V,f_E)$ where $f_V\colon V(G)\to V(G')$ and $f_E\colon E(G)\to E(G')$ so that it is an isomorphism between multigraphs and also for any two edges $e,e'\in E(G)$ with some vertex $v\in e\cap e'$, we have $\textup{sgn}(v,e)=\textup{sgn}(v,e')$ if and only if $\textup{sgn}(f_V(v),f_E(e))=\textup{sgn}(f_V(v),f_E(e'))$.
\end{definition}

It turns out that each isomorphism class of local digraphs has a representation in graphs and a representation in digraphs, and both will be useful for us.
We first begin by defining the graphical representation.

\begin{center}
    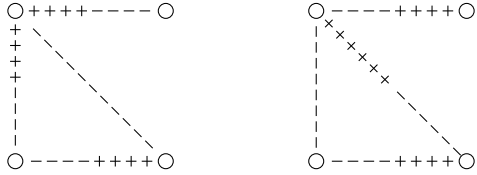
\begin{figure}[h]
        
        \begin{tikzpicture}[
  plusmarks/.style={
    postaction=decorate,
    decoration = {
            markings,
            mark = 
                between positions 0.08 and 0.92 step 6pt 
                with
                {
                    \draw (0pt, 2pt) -- (0pt, -2pt);
                    \draw (-2pt, 0pt) -- (2pt, 0pt);
                }
            }
  },
  minusmarks/.style={
    postaction=decorate,
    decoration = {
            markings,
            mark = 
                between positions 0.08 and 0.92 step 6pt 
                with
                {
                    \draw (-2pt, 0pt) -- (2pt, 0pt);
                }
            }
  },
  halfplus/.style={
    postaction=decorate,
    decoration={
            markings,
            mark = 
                between positions 0.08 and 0.45 step 6pt 
                with
                {
                    \draw (0pt, 2pt) -- (0pt, -2pt);
                    \draw (-2pt, 0pt) -- (2pt, 0pt);
                }
    }
  },
  halfminus/.style={
    postaction=decorate,
    decoration={markings,
      mark=between positions 0.55 and 0.92 step 6pt with
        {
                    \draw (-2pt, 0pt) -- (2pt, 0pt);}
    }
  }
        ]
        \begin{scope}
        \node[circle, draw, fill=white, inner sep=2pt] (A1) at (0,0) {};
        \node[circle, draw, fill=white, inner sep=2pt] (B1) at (2,0) {};
        \node[circle, draw, fill=white, inner sep=2pt] (C1) at (2,2) {};
        \node[circle, draw, fill=white, inner sep=2pt] (D1) at (0,2) {};
        \path[halfplus] (D1) -- (A1);
        \path[halfminus] (D1) -- (A1);
        \path[halfplus] (B1) -- (A1);
        \path[halfminus] (B1) -- (A1);
        \path[minusmarks] (B1) -- (D1);
        \path[halfplus] (D1) -- (C1);
        \path[halfminus] (D1) -- (C1);
        \end{scope}

        \begin{scope}[xshift=4cm]
        \node[circle, draw, fill=white, inner sep=2pt] (A2) at (0,0) {};
        \node[circle, draw, fill=white, inner sep=2pt] (B2) at (2,0) {};
        \node[circle, draw, fill=white, inner sep=2pt] (C2) at (2,2) {};
        \node[circle, draw, fill=white, inner sep=2pt] (D2) at (0,2) {};
        \path[minusmarks](A2)--(D2);
        \path[halfplus] (C2) -- (D2);
        \path[halfminus] (C2) -- (D2);
        \path[halfplus] (0,2) -- (2+0.1,0-0.1);
        \path[halfminus] (0,2) -- (2+0.05,0-0.05);
        \path[halfplus] (B2) -- (A2);
        \path[halfminus] (B2) -- (A2);
        \end{scope}
        \end{tikzpicture}

        \caption{Two isomorphic local digraphs}
        \label{fig:iso-digraph}
    \end{figure}
    
\end{center}
\begin{definition}[Graphification]\label{def:graphification}
    Let $G$ be a local digraph.
    Its \emph{graphification} $\gr(G)$ is a multigraph on $V(G)^+\sqcup V(G)^-$, where $V(G)^+$ and $V(G)^-$ are disjoint copies of $V(G)$ with signs, and 
    \[E(\gr(G)) = \left\{u^{\textup{sgn}(u,e)}v^{\textup{sgn}(v,e)}: e\in E(G)\textup{ with endpoints }u,v \right\}.\]
\end{definition}
See \cref{fig:graphification} for an example of graphification.

In the opposite direction, for any graph $G$ with a perfect matching $M$, we may reverse the process and define a local digraph with $M$ as its vertex set and $G\backslash M$ as its graphification.

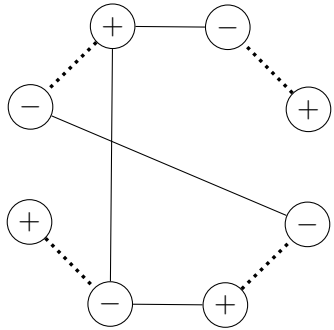
\begin{figure}[h]
    \begin{center}
        \begin{tikzpicture}
          \foreach \i/\lab in {22/+,67/-,112/+,157/-,202/+,247/-,292/+,337/-}
            \node[circle, draw, fill=white, inner sep=2pt] (v\i) at (\i:2cm) {$\lab$};
        
          \draw (v67)--(v112)--(v247)--(v292);
          \draw (v157)--(v337);
          \draw[dotted,line width=1.3pt] (v22)--(v67);
          \draw[dotted,line width=1.3pt] (v112)--(v157);
          \draw[dotted,line width=1.3pt] (v202)--(v247);
          \draw[dotted,line width=1.3pt] (v292)--(v337);
        \end{tikzpicture}
    \end{center}
    \caption{Graphification of the local digraph in \cref{fig:iso-digraph}. Dotted lines connect pairs of vertices that come from the same vertices in the original local digraph.}
    \label{fig:graphification}
\end{figure}

\begin{definition}\label{def:corresp-local-digraph}
    Let $G$ be a graph and $M$ be a perfect matching in $G$.
    Let $\ldg(G,M)$ be the local digraph whose vertex set is $\{v_e:e\in E(M)\}$ defined as follows.
    For every $e\in E(M)$, label its endpoints with a plus sign and a minus sign arbitrarily with a bijection $\sgn_e\colon e\to \{\pm\}$.
    Then $\ldg(G,M)$ is the unique local digraph whose graphification is $G\backslash M$.
    To be more specific, for each $e\in E(G)\backslash E(M)$, if $e_1,e_2\in E(M)$ are the two edges containing the endpoints of $e$, then create an edge $\Tilde{e}$ in $\ldg(G,M)$ between $e_1,e_2$ with $\sgn(v_{e_i},\Tilde{e}) = \sgn_{e_i}(e_i\cap e)$ for every $i\in\{1,2\}$.
\end{definition}

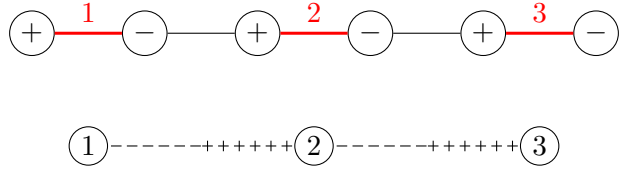
\begin{figure}[h]
    \begin{center}
        \begin{tikzpicture}[
  plusmarks/.style={
    postaction=decorate,
    decoration = {
            markings,
            mark = 
                between positions 0.08 and 0.92 step 6pt 
                with
                {
                    \draw (0pt, 2pt) -- (0pt, -2pt);
                    \draw (-2pt, 0pt) -- (2pt, 0pt);
                }
            }
  },
  minusmarks/.style={
    postaction=decorate,
    decoration = {
            markings,
            mark = 
                between positions 0.08 and 0.92 step 6pt 
                with
                {
                    \draw (-2pt, 0pt) -- (2pt, 0pt);
                }
            }
  },
  halfplus/.style={
    postaction=decorate,
    decoration={
            markings,
            mark = 
                between positions 0.04 and 0.47 step 6pt 
                with
                {
                    \draw (0pt, 2pt) -- (0pt, -2pt);
                    \draw (-2pt, 0pt) -- (2pt, 0pt);
                }
    }
  },
  halfminus/.style={
    postaction=decorate,
    decoration={markings,
      mark=between positions 0.53 and 0.96 step 6pt with
        {
                    \draw (-2pt, 0pt) -- (2pt, 0pt);}
    }
  }
        ]
            \begin{scope}
             \foreach \i/\lab in {1/+,2/-,3/+,4/-,5/+,6/-}
            \node[circle, draw, fill=white, inner sep=2pt] (v\i) at (1.5*\i,0) {$\lab$};
            \draw[line width=0.4mm, red ] (v1) -- (v2) node[midway,above] {$1$};
            \draw (v2)--(v3);
            \draw[line width=0.4mm, red] (v3)--(v4) node[midway,above]{$2$};
            \draw (v4)--(v5);
            \draw[line width=0.4mm, red] (v5)--(v6) node[midway,above]{$3$};
            \end{scope}

            \begin{scope}[yshift=-1.5cm]
                \foreach \i in {1,2,3}
            \node[circle, draw, fill=white, inner sep=2pt] (v\i) at (3*\i-0.75,0) {$\i$};
            \path[halfplus] (v2)--(v1);
            \path[halfminus] (v2)--(v1);
            \path[halfplus] (v3)--(v2);
            \path[halfminus] (v3)--(v2);
            \end{scope}
          
        \end{tikzpicture}
    \end{center}
    \caption{$P_6$ with the perfect matching $M$, and the corresponding local digraph $\ldg(P_6,M)$.}
    \label{fig:ldg}
\end{figure}

To end this subsection, we define an operation that lifts a local digraph to a digraph.

\begin{definition}[Double cover]\label{def:double-cover}
    Let $G$ be a local digraph.
    Its \emph{double cover} $\overline{G}$ is a multi-digraph on $V(G)^+\sqcup V(G)^-$ so that for every edge $e\in E(G)$ with $e=\{uv\}$, we have two corresponding directed edges: $e_1$ that goes from $u^{\textup{sgn}(u,e)}$ to $v^{-\textup{sgn}(v,e)}$, and $e_2$ that goes from $v^{\textup{sgn}(v,e)}$ to $u^{-\textup{sgn}(u,e)}$.
\end{definition}

Note that each vertex is replaced with two vertices, and each edge corresponds to two directed edges in the digraph, which is why we call it a double cover.
We remark that this double cover is ``trivial'', i.e.\ it splits into two layers of digraphs, if and only if the local digraph corresponds to some digraph.
As we will not use this, we omit the proof here.

\begin{figure}[h]
    \centering
        \begin{tikzpicture}[
  plusmarks/.style={
    postaction=decorate,
    decoration = {
            markings,
            mark = 
                between positions 0.08 and 0.92 step 6pt 
                with
                {
                    \draw (0pt, 2pt) -- (0pt, -2pt);
                    \draw (-2pt, 0pt) -- (2pt, 0pt);
                }
            }
  },
  minusmarks/.style={
    postaction=decorate,
    decoration = {
            markings,
            mark = 
                between positions 0.08 and 0.92 step 6pt 
                with
                {
                    \draw (-2pt, 0pt) -- (2pt, 0pt);
                }
            }
  },
  halfplus/.style={
    postaction=decorate,
    decoration={
            markings,
            mark = 
                between positions 0.08 and 0.45 step 6pt
                with
                {
                    \draw (0pt, 2pt) -- (0pt, -2pt);
                    \draw (-2pt, 0pt) -- (2pt, 0pt);
                }
    }
  },
  halfminus/.style={
    postaction=decorate,
    decoration={markings,
      mark=between positions 0.55 and 0.92 step 6pt with
        {
                    \draw (-2pt, 0pt) -- (2pt, 0pt);}
    }
  }
        ]
        \begin{scope}
        \node[circle, draw, fill=white, inner sep=2pt] (A1) at (0,0) {};
        \node[circle, draw, fill=white, inner sep=2pt] (B1) at (2,0) {};
        \node[circle, draw, fill=white, inner sep=2pt] (C1) at (2,2) {};
        \node[circle, draw, fill=white, inner sep=2pt] (D1) at (0,2) {};
        \path[halfplus] (D1) -- (A1);
        \path[halfminus] (D1) -- (A1);
        \path[halfplus] (B1) -- (A1);
        \path[halfminus] (B1) -- (A1);
        \path[minusmarks] (B1) -- (D1);
        \path[halfplus] (D1) -- (C1);
        \path[halfminus] (D1) -- (C1);
        \end{scope}

        \begin{scope}[xshift=4cm,yshift=-1cm]
            \foreach \x/\y in {0/0,2/0,0/2,2/2} \node[circle,draw,fill=white,inner sep=2pt] (plus\x\y) at (\x*1.5,\y*1.5) {$+$};
            \foreach \x/\y in {0/0,2/0,0/2,2/2} \node[circle,draw,fill=white,inner sep=2pt] (minus\x\y) at (\x*1.5+1,\y*1.5+1) {$-$};
            \tikzset{>={Stealth[scale=1.6]}};
            \draw[>->] (plus20)--(plus00);
            \draw[>->] (minus00)--(minus20);
            \draw[>->] (plus02)--(plus00);
            \draw[>->] (minus00)--(minus02);
            \draw[>->] (plus02)--(plus22);
            \draw[>->] (minus22)--(minus02);
            \draw[>->] (minus20)--(plus02);
            \draw[>->] (minus02)--(plus20);
            
          \draw[dotted,line width=1.3pt] (plus20)--(minus20);
          \draw[dotted,line width=1.3pt] (plus02)--(minus02);
          \draw[dotted,line width=1.3pt] (plus00)--(minus00);
          \draw[dotted,line width=1.3pt] (plus22)--(minus22);
        \end{scope}

    \end{tikzpicture}
    \caption{A local digraph and its double cover. Dotted lines connect vertices in the double cover that correspond to the same vertex of the local digraph.}
    \label{fig:double-cover}
\end{figure}
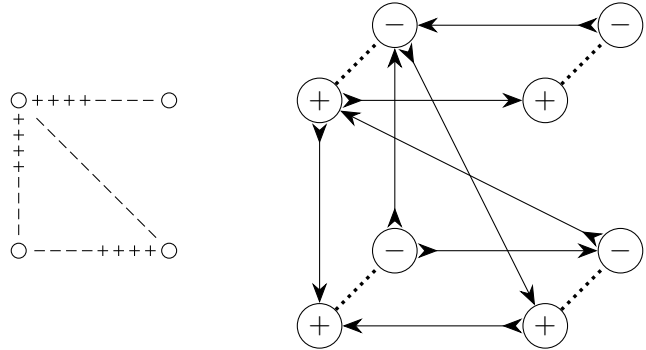

\subsection{Locally directed acyclic graphs}

Having defined what local digraphs are, we now study the local-digraph analog of acyclicity.
Later, we will show that this is the property that a local digraph would naturally possess if it comes from a graph with a perfect matching being its unique fractional perfect matching.
Along the way, we will also prove some useful properties of acyclic local digraphs.

\begin{definition}[Locally directed acyclic graph]\label{def:ldag}
    Given a local digraph $G$. A \emph{locally directed closed walk} in $G$ is a walk $v_1\xrightarrow{e_1}v_2\xrightarrow{e_2}\cdots\xrightarrow{e_{t-1}}v_t\xrightarrow{e_t=e_0}v_{t+1}=v_1$ with $\textup{sgn}(v_i,e_i)\neq \textup{sgn}(v_i,e_{i-1})$ for all $i\in[t]$.
    We call $G$ a \emph{locally directed acyclic graph} or \emph{LDAG} if it does not have any locally directed closed walk.
\end{definition}

We would like to warn the readers that unlike the case for graphs or digraphs, having a locally directed closed walk is different from having a locally directed closed cycle, a closed walk that uses no repeated vertices.
For example, the local digraph in \cref{fig:no-directed-cycle} has a directed closed walk $514523$, yet the only cycles in the local digraph are $514$ and $523$, both of which are not locally directed at $5$.
However, it is true that each locally directed closed walk can be reduced to a locally directed closed walk that uses each vertex at most twice.
We will also omit the proof of this claim as this will not be used, though it can be extracted from the proof of \cref{prop:ldag-double-cover}.

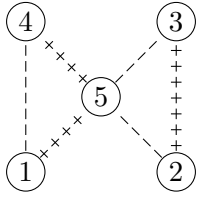
\begin{figure}[h]
    \centering
        \begin{tikzpicture}[
  plusmarks/.style={
    postaction=decorate,
    decoration = {
            markings,
            mark = 
                between positions 0.06 and 0.94 step 6pt 
                with
                {
                    \draw (0pt, 2pt) -- (0pt, -2pt);
                    \draw (-2pt, 0pt) -- (2pt, 0pt);
                }
            }
  },
  minusmarks/.style={
    postaction=decorate,
    decoration = {
            markings,
            mark = 
                between positions 0.06 and 0.94 step 6pt 
                with
                {
                    \draw (-2pt, 0pt) -- (2pt, 0pt);
                }
            }
  },
  diaplus/.style={
    postaction=decorate,
    decoration = {
            markings,
            mark = 
                between positions 0.09 and 0.91 step 6pt 
                with
                {
                    \draw (0pt, 2pt) -- (0pt, -2pt);
                    \draw (-2pt, 0pt) -- (2pt, 0pt);
                }
            }
  },
  diaminus/.style={
    postaction=decorate,
    decoration = {
            markings,
            mark = 
                between positions 0.09 and 0.91 step 6pt 
                with
                {
                    \draw (-2pt, 0pt) -- (2pt, 0pt);
                }
            }
  },]
        \node[circle, draw, fill=white, inner sep=2pt] (A1) at (0,0) {$1$};
        \node[circle, draw, fill=white, inner sep=2pt] (B1) at (2,0) {$2$};
        \node[circle, draw, fill=white, inner sep=2pt] (C1) at (2,2) {$3$};
        \node[circle, draw, fill=white, inner sep=2pt] (D1) at (0,2) {$4$};
        \node[circle, draw, fill=white, inner sep=2pt] (E1) at (1,1) {$5$};
        \path[minusmarks] (A1)--(D1);
        \path[diaminus] (B1)--(E1);
        \path[diaminus] (E1)--(C1);
        \path[plusmarks] (B1)--(C1);
        \path[diaplus] (A1)--(E1);
        \path[diaplus] (E1)--(D1);

    \end{tikzpicture}
    \caption{A local digraph with a directed closed walk but no directed cycle}
    \label{fig:no-directed-cycle}
\end{figure}

Next, we show that acyclicity is preserved under taking double covers.

\begin{proposition}\label{prop:ldag-double-cover}
    A local digraph is acyclic if and only if its double cover is acyclic.
\end{proposition}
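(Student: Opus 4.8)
The plan is to prove the two directions simultaneously by exhibiting an explicit correspondence between \emph{locally directed closed walks} in $G$ and \emph{closed directed walks} in $\overline{G}$; since a local digraph is an LDAG exactly when it has no locally directed closed walk, and a digraph is acyclic exactly when it has no closed directed walk, this correspondence gives the claimed equivalence immediately. The first step is to unpack \cref{def:double-cover} at the level of a single incidence. Fixing a vertex $v$ of $G$, an edge $e$ incident to $v$, and writing $s=\textup{sgn}(v,e)$, one reads off directly that $e$ contributes to $\overline{G}$ exactly one directed edge leaving $v^{s}$ and exactly one directed edge entering $v^{-s}$. Phrased the other way around: every directed edge of $\overline{G}$ leaving a copy $v^{t}$ comes from an underlying edge $e$ of $G$ with $\textup{sgn}(v,e)=t$, and every directed edge entering $v^{t}$ comes from an edge $e$ with $\textup{sgn}(v,e)=-t$. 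This single observation is the engine for both directions.

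For the forward direction, take a closed directed walk $w_1\to w_2\to\cdots\to w_t\to w_1$ in $\overline{G}$ along directed edges $f_1,\dots,f_t$, and write $w_i=v_i^{t_i}$. Forgetting the decorations $t_i$ and which of the two parallel directed copies of each underlying edge was used, we obtain a closed walk $v_1\xrightarrow{f_1}v_2\xrightarrow{f_2}\cdots\xrightarrow{f_t}v_1$ in $G$; this is a genuine walk because $\overline{G}$, like $G$, has no self-loops, so consecutive $v_i$ are distinct. At the copy $w_i=v_i^{t_i}$ the walk enters along $f_{i-1}$ and leaves along $f_i$, so by the observation above $\textup{sgn}(v_i,f_{i-1})=-t_i$ while $\textup{sgn}(v_i,f_i)=t_i$; in particular $\textup{sgn}(v_i,f_{i-1})\neq\textup{sgn}(v_i,f_i)$, which is precisely the defining condition of a locally directed closed walk. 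Hence $\overline{G}$ non-acyclic implies $G$ has a locally directed closed walk.

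For the reverse direction, start from a locally directed closed walk $v_1\xrightarrow{e_1}\cdots\xrightarrow{e_{t-1}}v_t\xrightarrow{e_t}v_1$ in $G$ (indices of the $e_i$ read modulo $t$, so $e_0=e_t$), put $s_i=\textup{sgn}(v_i,e_i)$, and lift the $i$-th step to the unique directed edge of $\overline{G}$ that comes from $e_i$ and leaves $v_i^{s_i}$; this exists since $\textup{sgn}(v_i,e_i)=s_i$. By \cref{def:double-cover} the head of this directed edge is $v_{i+1}^{-\textup{sgn}(v_{i+1},e_i)}$, and since $e_i$ is the edge along which the walk enters $v_{i+1}$, the sign condition at $v_{i+1}$ forces $\textup{sgn}(v_{i+1},e_i)=-\textup{sgn}(v_{i+1},e_{i+1})=-s_{i+1}$, so the head equals $v_{i+1}^{s_{i+1}}$ — exactly the tail of the next lifted step, and for $i=t$ this closes the walk back at $v_1^{s_1}$. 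Thus $G$ non-acyclic implies $\overline{G}$ has a closed directed walk, completing the equivalence.

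I do not expect a real obstacle here, since everything reduces to bookkeeping with the $\pm$ decorations; the one place that needs care is getting the direction of the lift right — namely that the correct copy of $v_i$ to use is the sign of the \emph{outgoing} edge $e_i$ rather than the incoming edge $e_{i-1}$ — and then checking that consecutive lifted steps share an endpoint, including the wrap-around from step $t$ to step $1$. It is also worth stating explicitly that the projection on walks is well defined even though each edge of $G$ has two parallel directed lifts in $\overline{G}$, and (consistently with the remark following \cref{def:ldag}) that this correspondence need not send closed \emph{cycles} to closed \emph{cycles}, which is why the statement is phrased in terms of closed walks on both sides.
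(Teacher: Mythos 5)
Your proposal is correct and follows essentially the same argument as the paper: lift a locally directed closed walk to the double cover by sending the $i$-th step to the directed edge leaving $v_i^{\sgn(v_i,e_i)}$, and conversely project a directed closed walk down, with the sign bookkeeping forcing the alternation condition. The only difference is that you spell out the projection direction explicitly, which the paper compresses into ``reversing each step.''
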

\begin{proof}
    We first show that if a local digraph has a locally directly closed walk, then its double cover has a directed closed walk.
    To show this, it suffices to consider the double cover of a locally directed closed walk $v_1\xrightarrow{e_1}v_2\xrightarrow{e_2}\cdots\xrightarrow{e_{t-1}}v_t\xrightarrow{e_t=e_0}v_{t+1}=v_1$.
    Now set $s_i =  \sgn(v_i,e_i)$ for each $i\in[t]$ and set $s_{t+1}=s_1$.
    Then one of the edges that $e_i$ is lifted up to is $v_i^{s_i}\to v_{i+1}^{-\sgn(v_{i+1},e_i)}$, which is simply $v_i^{s_i}\to v_{i+1}^{s_{i+1}}$ as $s_{i+1}=\sgn(v_{i+1},e_{i+1})\neq \sgn(v_{i+1},e_i)$.
    Therefore we have found a directed closed walk $v_1^{s_1}\to v_2^{s_2}\to\cdots\to v_t^{s_t}\to v_1^{s_1}$, as desired.
    Reversing each step, we can also see that each directed closed walk in the double cover projects down to a locally directed closed walk in the original local digraph.
\end{proof}

\begin{figure}[h]
    \centering
        \begin{tikzpicture}[
  plusmarks/.style={
    postaction=decorate,
    decoration = {
            markings,
            mark = 
                between positions 0.08 and 0.92 step 6pt 
                with
                {
                    \draw (0pt, 2pt) -- (0pt, -2pt);
                    \draw (-2pt, 0pt) -- (2pt, 0pt);
                }
            }
  },
  minusmarks/.style={
    postaction=decorate,
    decoration = {
            markings,
            mark = 
                between positions 0.08 and 0.92 step 6pt 
                with
                {
                    \draw (-2pt, 0pt) -- (2pt, 0pt);
                }
            }
  },
  halfplus/.style={
    postaction=decorate,
    decoration={
            markings,
            mark = 
                between positions 0.08 and 0.45 step 6pt 
                with
                {
                    \draw (0pt, 2pt) -- (0pt, -2pt);
                    \draw (-2pt, 0pt) -- (2pt, 0pt);
                }
    }
  },
  halfminus/.style={
    postaction=decorate,
    decoration={markings,
      mark=between positions 0.55 and 0.92 step 6pt with
        {
                    \draw (-2pt, 0pt) -- (2pt, 0pt);}
    }
  }
        ]
        \begin{scope}
        \node[circle, draw, fill=white, inner sep=2pt] (A1) at (0,0) {};
        \node[circle, draw, fill=white, inner sep=2pt] (B1) at (2,0) {};
        \node[circle, draw, fill=white, inner sep=2pt] (C1) at (2,2) {};
        \node[circle, draw, fill=white, inner sep=2pt] (D1) at (0,2) {};
        \path[halfplus] (D1) -- (A1);
        \path[halfminus] (D1) -- (A1);
        \path[plusmarks] (A1) -- (B1);
        \path[minusmarks] (B1) -- (D1);
        \path[halfplus] (D1) -- (C1);
        \path[halfminus] (D1) -- (C1);
        \end{scope}

        \begin{scope}[xshift=4cm,yshift=-1cm]
            \foreach \x/\y in {0/0,2/0,0/2,2/2} \node[circle,draw,fill=white,inner sep=2pt] (plus\x\y) at (\x*1.5,\y*1.5) {$+$};
            \foreach \x/\y in {0/0,2/0,0/2,2/2} \node[circle,draw,fill=white,inner sep=2pt] (minus\x\y) at (\x*1.5+1,\y*1.5+1) {$-$};
            \tikzset{>={Stealth[scale=1.6]}};
            \draw[>->,red] (plus20)--(minus00);
            \draw[>->,blue] (plus00)--(minus20);
            \draw[>->,blue] (plus02)--(plus00);
            \draw[>->,red] (minus00)--(minus02);
            \draw[>->] (plus02)--(plus22);
            \draw[>->] (minus22)--(minus02);
            \draw[>->,blue] (minus20)--(plus02);
            \draw[>->,red] (minus02)--(plus20);
          \draw[dotted,line width=1.3pt] (plus20)--(minus20);
          \draw[dotted,line width=1.3pt] (plus02)--(minus02);
          \draw[dotted,line width=1.3pt] (plus00)--(minus00);
          \draw[dotted,line width=1.3pt] (plus22)--(minus22);
        \end{scope}

    \end{tikzpicture}
    \caption{A local digraph with a locally directed closed walk, and its double cover with directed cycles. Dotted lines connect vertices in the double cover that correspond to the same vertex of the local digraph.}
    \label{fig:cyclic-double-cover}
\end{figure}
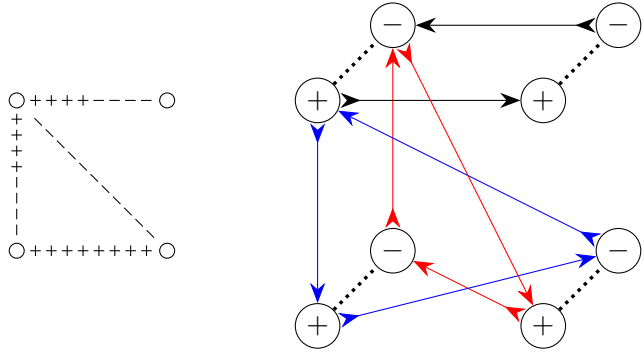

For example, in \cref{fig:double-cover} we see that both the local digraph and its double cover are acyclic, whereas in \cref{fig:cyclic-double-cover} we can find (locally) directed closed walks in both the local digraph and its double cover.

Now we can transfer properties of acyclic digraphs, also called DAGs, to properties of LDAGs. 
One property that we need regards what is known as \emph{topological sorting}.

\begin{definition}[Topological sort]\label{def:topological-sort}
    For any LDAG $G$, an ordering $v_1,\ldots, v_n$ of $V(G)$ is a \emph{topological sort} of $G$ if there is a choice of signs $s_1,\ldots,s_n$ such that for any $1\leq i<j\leq n$ and any edge $e$ between $v_i$ and $v_j$, we have  $\textup{sgn}(v_i,e)=s_i$.
\end{definition}

\begin{proposition}\label{prop:topological-sort}
    Every LDAG admits a topological sort.
\end{proposition}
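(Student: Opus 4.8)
The plan is to deduce the statement from \cref{prop:ldag-double-cover}. Since a finite directed acyclic multigraph always admits a topological ordering of its vertices (e.g.\ by repeatedly deleting a source), and the double cover $\overline{G}$ of an LDAG $G$ is acyclic by \cref{prop:ldag-double-cover}, it suffices to push a topological order of $\overline{G}$ down to $G$. So first I would fix a linear order $\prec$ on $V(\overline G)=V(G)^+\sqcup V(G)^-$ in which every directed edge of $\overline G$ points forward.

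The second step is to read off from $\prec$ an ordering of $V(G)$ together with a sign at each vertex. For $v\in V(G)$ the two lifts $v^+,v^-$ are distinct vertices of $\overline G$; let $s_v$ be the sign for which $v^{s_v}\prec v^{-s_v}$, i.e.\ $v^{s_v}$ is the lift of $v$ that comes first in $\prec$. Then order $V(G)$ as $v_1,\dots,v_n$ so that $v_1^{s_{v_1}}\prec v_2^{s_{v_2}}\prec\cdots\prec v_n^{s_{v_n}}$. I claim that $v_1,\dots,v_n$ together with the signs $s_{v_1},\dots,s_{v_n}$ is a topological sort of $G$ in the sense of \cref{def:topological-sort}.

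To verify this, take $i<j$ and an edge $e$ of $G$ between $u:=v_i$ and $w:=v_j$; we must check $\sgn(u,e)=s_{v_i}=s_u$. Suppose not, so $\sgn(u,e)=-s_u$. By \cref{def:double-cover}, one of the two lifts of $e$ in $\overline G$ is the directed edge from $w^{\sgn(w,e)}$ to $u^{-\sgn(u,e)}=u^{s_u}$, so forwardness of $\prec$ gives $w^{\sgn(w,e)}\prec u^{s_u}$. Since $w^{s_w}$ is by definition the $\prec$-smallest lift of $w$, we have $w^{s_w}\preceq w^{\sgn(w,e)}\prec u^{s_u}$, hence $w^{s_w}\prec u^{s_u}$; this contradicts $v_i^{s_{v_i}}\prec v_j^{s_{v_j}}$, i.e.\ $u^{s_u}\prec w^{s_w}$. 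Therefore $\sgn(u,e)=s_u=s_{v_i}$, as needed.

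The argument is short, and the only place I expect to need care is the orientation bookkeeping in the last step: tracking which of the two lifts of $e$ from \cref{def:double-cover} to use, and noting that under the contrary hypothesis $-\sgn(u,e)$ is exactly $s_u$. (An alternative route avoids the double cover: one shows directly that an LDAG contains a vertex $v$ all of whose incident edges carry the same sign at $v$ — otherwise, repeatedly leaving each vertex along an edge whose sign there differs from that of the edge just used produces, by pigeonhole on the finite state set $V(G)\times\{+,-\}$, a locally directed closed walk — and then inducts on $\abs{V(G)}$ by placing such a $v$ first with that sign. I would keep the double-cover proof as the main one, since it directly reuses \cref{prop:ldag-double-cover}.)
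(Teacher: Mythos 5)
Your proposal is correct and follows essentially the same route as the paper: pass to the double cover (acyclic by \cref{prop:ldag-double-cover}), take a topological order of it, keep for each vertex its earlier lift together with that sign, and derive the same contradiction from a hypothetical edge with the wrong sign. The sign bookkeeping via \cref{def:double-cover} is handled correctly, so no changes are needed.
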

\begin{proof}
    Let $G$ be an LDAG with $n$ vertices, and let $u_1,\ldots, u_{2n}$ be a topological sort of the double cover $\overline{G}$ of $G$, so that each directed edge goes from $u_i$ to $u_j$ for some $j>i$. 
    For each $v\in V(G)$, delete the later appearance of $v^+$ and $v^-$ in the ordering $u_1,\ldots,u_{2n}$. 
    Suppose that $v_1^{s_1},\ldots, v_n^{s_n}$ is the resulting ordering.
    We claim that any edge $e\in E(G)$ between $v_i,v_j$ with $i<j$ must satisfy $\textup{sgn}(v_i,e)=s_i$, which would imply that $v_1,\ldots, v_n$ is a topological sort for $G$.

    For the sake of contradiction, assume that $\sgn(v_i,e)=-s_i$ for some $1\leq i<j\leq n$ and edge $e$ between $v_i$ and $v_j$.
    Then we know that there is a directed edge in $\overline{G}$ going from $v_j^{\sgn(v_j,e)}$ to $v_i^{-\sgn(v_i,e)} = v_i^{s_i}$.
    However, by the choice of the ordering and the signs, we know that $v_j^{\sgn(v_j,e)}$ does not appear earlier than $v_j^{s_j}$, which appears later than $v_i^{s_i}$ in the topological sort of $\overline{G}$.
    This is a contradiction, as desired.
\end{proof}

Now we can prove the main proposition of this subsection, which explains why we are interested in LDAG.

\begin{proposition}\label{prop:unique-pm-imply-ldag}
    Let $G$ be a graph and $M$ be a perfect matching.
    Then $M$ is the only fractional perfect matching of $G$ if and only if $\ldg(G,M)$ is acyclic.
\end{proposition}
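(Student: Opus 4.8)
The plan is to reduce the statement to the cleaner claim that $M$ is the unique fractional perfect matching of $G$ if and only if $G$ contains no \emph{$M$-alternating closed walk} (a closed walk whose edges alternate between $E(M)$ and $E(G)\setminus E(M)$), and then to note that this latter condition is precisely acyclicity of $\ldg(G,M)$. The equivalence with acyclicity is essentially definitional. Unwinding \cref{def:corresp-local-digraph}, a vertex of $\ldg(G,M)$ is a matching edge $m\in E(M)$, and the sign at $v_m$ of the arc coming from a non-matching edge $e\in E(G)\setminus E(M)$ records which of the two endpoints of $m$ the edge $e$ meets. Hence in a locally directed closed walk $v_{m_1}\xrightarrow{e_1}v_{m_2}\xrightarrow{e_2}\cdots\xrightarrow{e_t}v_{m_1}$, the condition $\sgn(v_{m_i},e_i)\neq\sgn(v_{m_i},e_{i-1})$ says exactly that $e_{i-1}$ and $e_i$ meet $m_i$ at its two distinct endpoints; splicing the matching edge $m_i$ between consecutive $e_{i-1},e_i$ converts such a walk into an $M$-alternating closed walk in $G$, and conversely every $M$-alternating closed walk arises this way. (Alternatively one may route this through the double cover $\overline{\ldg(G,M)}$ via \cref{prop:ldag-double-cover}, after identifying $V(\overline{\ldg(G,M)})$ with $V(G)$ and checking that its arcs are $x\to y$ whenever $x$ is joined by a non-matching edge to the $M$-partner of $y$; a locally directed closed walk then corresponds to a closed directed walk, hence a directed cycle.) It thus remains to prove the fractional-matching characterization, which I would do by contrapositive in both directions.

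For the ``only if'' direction, suppose $G$ has an $M$-alternating closed walk $W$. Define $f\colon E(G)\to\RR$ by letting $f_e$ be the multiplicity of $e$ in $W$ if $e\notin E(M)$ and minus that multiplicity if $e\in E(M)$. Then $f\not\equiv 0$, $f\le 0$ on $E(M)$, $f\ge 0$ off $E(M)$, and $\sum_{e\ni v}f_e=0$ for every vertex $v$, because each passage of $W$ through $v$ uses one edge of $M$ and one edge outside $M$ and so contributes $(+1)+(-1)=0$ to the sum. Writing $\chi_M$ for the $0/1$-vector of $M$ (the fractional perfect matching given by $M$), it follows that $\chi_M+\varepsilon f$ has all entries in $[0,1]$ and satisfies $\sum_{e\ni v}(\chi_M+\varepsilon f)_e=1$ for all sufficiently small $\varepsilon>0$; hence it is a fractional perfect matching distinct from $\chi_M$, so $M$ is not unique.

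For the ``if'' direction, suppose $w$ is a fractional perfect matching with $w\neq\chi_M$, and set $f=w-\chi_M$, so that $\sum_{e\ni v}f_e=0$ for all $v$, with $f\le 0$ on $E(M)$, $f\ge 0$ off $E(M)$, and $f\not\equiv 0$. First, some matching edge must have $f_e<0$: otherwise the vertex equations, together with $f\ge0$ off $E(M)$, force $f\equiv 0$. Starting from such an edge $v_0v_1\in E(M)$, I would grow an $M$-alternating walk $v_0,v_1,v_2,\dots$ as follows. Having just arrived at a vertex along a matching edge with $f<0$, the vertex equation (all incident non-matching $f$-values being $\ge0$, one of them making up the deficit) yields an outgoing non-matching edge with $f>0$; having just arrived along a non-matching edge with $f>0$, the vertex equation forces its unique incident matching edge to carry $f<0$, and we continue along that edge to the $M$-partner of the current vertex. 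The walk therefore never stalls, so by finiteness of $G$ it revisits some vertex at two positions of the same parity; the enclosed portion is a closed walk whose edges still alternate between $E(M)$ and $E(G)\setminus E(M)$, i.e.\ an $M$-alternating closed walk. (Equivalently, negating $f$ on $E(M)$ produces a nonzero nonnegative circulation on $\overline{\ldg(G,M)}$, whose flow decomposition contains a directed cycle.)

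The main obstacle is the walk-growing step of the ``if'' direction: one has to check carefully that the sign bookkeeping at every vertex really does let the walk continue — in particular, that the needed incident non-matching edge with positive $f$-value exists, and that the matching partner is always available (this uses that $M$ is a \emph{perfect} matching) — and that the repeated vertex can be taken at ``matching parity'', so that the extracted closed sub-walk is itself $M$-alternating and not, say, an even walk that begins and ends with a non-matching edge at the same vertex in an incompatible way. Everything else — the definitional translation to $\ldg(G,M)$ and the construction in the ``only if'' direction — is routine verification.
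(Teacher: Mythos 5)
Your proof is correct, and it splits against the paper's as follows. The direction ``closed walk $\Rightarrow$ second fractional perfect matching'' is essentially identical to the paper's: the paper also translates a locally directed closed walk in $\ldg(G,M)$ into an $M$-alternating closed walk in $G$ and then perturbs $\chi_M$ by $\pm\varepsilon$ times the edge multiplicities, exactly as you do. Where you genuinely diverge is the converse. The paper, given a second fractional perfect matching $f$, first deletes every matching edge $e$ with $f(e)=1$ together with its endpoints, notes that in the resulting $\ldg(G',M')$ every vertex is incident to edges of both signs, and concludes from \cref{prop:topological-sort} that no topological sort exists, hence $\ldg(G',M')$ (an induced sub-local-digraph of $\ldg(G,M)$) is not acyclic; it never exhibits a closed walk. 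You instead take $f=w-\chi_M$, which is $\le 0$ on $M$, $\ge 0$ off $M$, and has zero sum at every vertex, and grow an alternating walk by alternately following a non-matching edge with $f>0$ and the unique (since $M$ is perfect) matching edge, which is forced to have $f<0$; a pigeonhole on (vertex, arrival-type) states then yields a cyclically alternating closed sub-walk, which projects to a locally directed closed walk. This is the classical ``difference of two fractional matchings supports an alternating closed structure'' argument: it is self-contained (no topological sorts, no double cover, no preliminary deletion of $f(e)=1$ edges, since strictness comes from $f\not\equiv 0$) and it is constructive, producing the closed walk explicitly; the cost is exactly the bookkeeping you flag, which does go through — at a vertex reached by a matching edge with $f<0$ the vertex equation forces a non-matching edge with $f>0$ and vice versa, and repeating a state of the form (vertex, arrived via its matching edge) gives a segment that alternates also across the base point and, in a simple graph, has length at least four, so it corresponds to a locally directed closed walk with at least two steps. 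The paper's route is shorter given the machinery it has already built; yours trades that machinery for a direct, standard combinatorial argument.
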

\begin{proof}
    We first show that if there is another fractional perfect matching, then $\ldg(G,M)$ is not acyclic.
    Let $f\colon E(G)\to [0,1]$ be such that $\sum_{e\ni v}f(e)=1$ for every $v\in V(G)$.
    Now whenever $f(e)=1$ for any edge $e$ in the matching $M$, remove both its endpoints together with all edges that use any of the two endpoints from $G$, and also remove it from $M$.
    We repeat this process until we end up with a graph $G'$ and a perfect matching $M'$ where $f(e)<1$ for every $e\in M'$.
    Then we see that $\ldg(G',M')$ is an induced subgraph of $\ldg(G,M)$ as local digraphs, and so it suffices to show that $\ldg(G',M')$ is not acyclic.
    Note that since $f(e)<1$ for every $e\in M'$, any $v\in V(G')$ is incident to some edge not in $M'$.
    As a consequence, for any vertex $v\in V(\ldg(G',M'))$ and sign $s\in\{\pm\}$ there exists an edge $e\in E(\ldg(G',M'))$ with $\sgn(v,e)=s$.
    However, this shows that there cannot be any topological sort for $\ldg(G',M')$ as no vertex can be the first in the ordering, which shows that $\ldg(G',M')$ is not acyclic by \cref{prop:topological-sort}.

    Now suppose that $\ldg(G,M)$ is not acyclic, which shows that there is a locally directed closed walk in $\ldg(G,M)$.
    Suppose that $v_1\xrightarrow{e_1}v_2\xrightarrow{e_2}\cdots\xrightarrow{e_{t-1}}v_t\xrightarrow{e_t=e_0}v_{t+1}=v_1$ is the locally directed closed walk, where each $v_i\in V(\ldg(G,M))$ corresponds to the edge $v_i^+v_i^-$ in the matching $M$.
    Let $u_i=v_i^{\sgn(v_i,e_{i-1})}$ and $w_i=v_{i}^{\sgn(v_i,e_i)}$ for each $i\in[t]$.
    By the definition of $\ldg(G,M)$, we know that the edges $e_1,\ldots, e_t$ in $\ldg(G,M)$ corresponds to the edges $w_1u_2,w_2u_3,\ldots, w_tu_1$ in $G$.
    By the definition of locally directed closed walk, we see that $u_i$ and $w_i$ must be two different endpoints of some edge in $M$, showing that $u_iw_i\in M$.
    Note that this shows that $u_1w_1\cdots u_tw_tu_1$ is an alternating closed walk for the matching $M$, which means that we can swap some weight on $M$ using the alternating walk to get another fractional perfect matching.
    To be more specific, let $c\colon E(G)\to\ZZ_{\geq 0}$ be the function that counts, for each edge $e$, how many times it appears as an edge in the walk $u_1w_1\cdots u_tw_tu_{1}$, and for every $\varepsilon>0$ let $f_{\varepsilon}:E(G)\to \RR$ be such that $f_{\varepsilon}(e)=\varepsilon c(e)$ if $e\not\in M$, and $f_{\varepsilon}(e) = 1-\varepsilon c(e)$ if $e\in M$.
    It is easy to verify that $f_{\varepsilon}$ is another fractional perfect matching for all $\varepsilon \leq \left(\max_{e\in E(G)}c(e)\right)^{-1}$.
\end{proof}

We end with an auxiliary lemma that will be useful later when we would like to show that iterated blowup preserves acyclicity.

\begin{lemma}\label{lem:LDAG-blowup}
    Let $G_1,G_2$ be two LDAGs, and let $v$ be a vertex of $G_1$.
    Let $G$ be a graph where, starting from $G_1$, $v$ is blown-up to a set of size $v(G_2)$ in $G_1$, and $G_2$ is put on this set.
    Then $G$ is an LDAG as well.
\end{lemma}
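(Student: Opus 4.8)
The plan is to produce a topological sort of $G$ out of topological sorts of $G_1$ and $G_2$, and then deduce that $G$ is an LDAG. For this last implication I would first record the converse of \cref{prop:topological-sort}, which is not stated above but is immediate: if a local digraph $\Gamma$ has a topological sort $v_1,\dots,v_n$ with signs $s_1,\dots,s_n$, then the ordering $v_1^{s_1},\dots,v_n^{s_n},v_n^{-s_n},\dots,v_1^{-s_1}$ is a topological order of the double cover $\overline\Gamma$ (one checks every lifted edge points forward), so $\overline\Gamma$ is acyclic and hence so is $\Gamma$ by \cref{prop:ldag-double-cover}. Thus it suffices to exhibit a topological sort of $G$.

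Fix topological sorts $a_1,\dots,a_n$ of $G_1$ (signs $s_1,\dots,s_n$), with $v=a_p$, and $b_1,\dots,b_k$ of $G_2$ (signs $t_1,\dots,t_k$), and identify the blown-up set $S$ with $\{b_1,\dots,b_k\}=V(G_2)$. Recall that $G$ has the edges of $G_1-v$; a copy joining $b_j$ to $w$ for each $G_1$-edge $vw$ and each $j$ (with the sign at $w$ unchanged and the sign at $b_j$ the one the blow-up inherits from $v$); and the edges of $G_2$ on $\{b_j\}$. I would order $V(G)$ as $a_1,\dots,a_{p-1},b_1,\dots,b_k,a_{p+1},\dots,a_n$, keeping sign $s_i$ on each $a_i$ and putting on $b_j$ the sign $\sigma_j$ that the blow-up's sign-alignment assigns to $b_j$ in view of $G_2$'s topological sort. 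Verifying that this is a topological sort of $G$ is then a case check on edges: an edge of $G_1-v$ is fine because the $a_i$ keep their relative order and signs; a copy of a $G_1$-edge $va_i$ with $i<p$ has earlier endpoint $a_i$, whose sign $s_i$ is already correct, while a copy with $i>p$ has earlier endpoint $b_j$, and here one uses that every edge of $G_1$ from $v$ to a topologically later vertex carries the sign $s_p$ at $v$; finally a $G_2$-edge $b_jb_{j'}$ with $j<j'$ has earlier endpoint $b_j$ with sign $\sigma_j$, which is correct because $b_1,\dots,b_k$ is a topological sort of $G_2$. The cases where $v$ has no topologically later neighbour, or $G_2$ is a single vertex, are immediate.

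The one genuinely delicate point — and the only place where more than just ``$G_1,G_2$ have topological sorts'' is used — is the case $i>p$ above: it requires that the common sign $s_p$ carried by all forward edges of $v$ in $G_1$, transported along the blow-up to a vertex $b_j$, matches the sign that $G_2$'s topological sort wants $b_j$ to have. This is exactly the compatibility built into how $G_2$ is ``put on'' the set $S$ (i.e.\ how the signs of $G_2$ are aligned with those inherited from $v$), and spelling this out cleanly after fixing the precise conventions is what I expect to be the main work in a careful write-up. An essentially equivalent route is to argue with double covers: $\overline G$ is obtained from the DAG $\overline{G_1}$ by splitting each of $v^+,v^-$ into $k$ clones — harmless, since cloning a vertex preserves acyclicity and $\{v^+,v^-\}$ is an independent set of $\overline{G_1}$ as $G_1$ is loopless — and then adding the DAG $\overline{G_2}$ on the resulting $2k$ clone-vertices; merging a topological order of $\overline{G_1}$ with one of $\overline{G_2}$ (possible again precisely because of the sign-alignment) gives a topological order of $\overline G$, and \cref{prop:ldag-double-cover} concludes.
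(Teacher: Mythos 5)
Your proposal follows the paper's own proof: the paper likewise takes topological sorts of $G_1$ and $G_2$ (via \cref{prop:topological-sort}), splices them as $v_1,\ldots,v_{i-1},v^{u_1},\ldots,v^{u_t},v_{i+1},\ldots,v_s$, and declares this a topological sort of $G$, implicitly using the converse direction of \cref{prop:topological-sort} that you make explicit through the double cover and \cref{prop:ldag-double-cover}. The sign-compatibility point you flag (the forward sign inherited from $v$ versus the sign that $G_2$'s topological sort assigns to $b_j$) is genuinely the crux and is exactly what the paper passes over with ``it is clear'', so your write-up matches the paper's argument and is, if anything, more careful about where the blow-up's sign conventions are actually used.
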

\begin{proof}
    Suppose that the vertex set $V(G)$ of $G$ can be written as $(V(G_1)\backslash \{v\})\cup \{v^u:u\in V(G_2)\}$, where $G$ induced on $\{v^u:u\in V(G_2)\}$ is isomorphic to $G_2$ via the bijection $v^u\mapsto u$ for each $u\in V(G_2)$.
    Let $v_1,\ldots, v_s$ be a topological sort of $G_1$ (where $s=v(G_1)$) and $u_1,\ldots, u_t$ be a topological sort of $G_2$ (where $t=v(G_2)$), which exist by \cref{prop:topological-sort}.
    Suppose that $v_i=v$.
    Then it is clear that $v_1,\ldots, v_{i-1}, v^{u_1},\ldots, v^{u_t},v_{i+1},\ldots, v_s$ is a topological sort of $G$, showing that $G$ is an LDAG once again by \cref{prop:topological-sort}.
\end{proof}

\subsection{From edge inducibility to vertex inducibility}

In this section, we will generalize \cref{thm:eind-as-hard-as-ind} to any graph with a perfect matching that has no other fractional perfect matchings.
It turns out that the edge inducibilities of such graphs are related to vertex inducibilities of certain local digraphs.
We will thus have to define vertex inducibility for local digraphs.

\begin{definition}[Vertex inducibility and acyclic vertex inducibility for local digraphs]\label{def:local-digraph-inducibility}
    Let $G$ be any local digraph.
    For any local digraph $H$, let $N_{\ind}(G,H)$ be the number of induced local digraphs of $H$ that are isomorphic to $G$.
    The \emph{vertex inducibility} of $G$ is
    \[\ind(G)\eqdef\limsup_{n\to\infty}\max_{v(H)=n}\frac{N_{\ind}(G,H)}{\binom{n}{v(G)}}\]
    where the maximum ranges through all local digraphs of size $n$.
    Its \emph{acyclic vertex inducibility} is denoted by $\aind(G)$, and is defined by the same expression except that $H$ is required to be acyclic.
    In particular, if $G$ is not an LDAG, then $\aind(G)=0$.
\end{definition}

\begin{theorem}\label{thm:aind-eind-ind}
    Let $G$ be a graph and $M$ be a perfect matching in $G$ so that $M$ is a unique fractional perfect matching.
    Then 
    \[\aind(\ldg(G,M))\leq \frac{2^{\abs{M}}\abs{M}!}{\abs{\Aut(G)}}\eind(G)\leq \ind(\ldg(G,M)).\]
\end{theorem}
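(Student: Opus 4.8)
This is a two-sided inequality generalizing Theorem~\ref{thm:eind-as-hard-as-ind}, and the natural strategy is to generalize the two halves of that proof, using the local-digraph machinery to lose no information when passing between a host graph $H$ (with $m$ edges) and an auxiliary local digraph. Throughout, write $k=\abs M$, so $\alpha^*(G)=k$ (since $M$ is the unique fractional perfect matching, the LP has a unique optimum, and by complementary slackness $\alpha^*(G)=k$). The key dictionary is: a host graph $H$ with a chosen perfect matching $M_H$ on $2m'$ vertices corresponds, via $\ldg(H,M_H)$, to a local digraph on $m'$ vertices; and an induced copy of $G$ in $H$ that maps $M$ into $M_H$ corresponds to an induced copy of $\ldg(G,M)$ in $\ldg(H,M_H)$.

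\textbf{Upper bound $\aind(\ldg(G,M))\le \frac{2^k k!}{\abs{\Aut(G)}}\eind(G)$.} Start from an acyclic local digraph $D$ on $n$ vertices with $N_{\ind}(\ldg(G,M),D)=(1-o(1))\aind(\ldg(G,M))\binom{n}{k}$. I would build a host graph $H$ on vertex set $V(D)^+\sqcup V(D)^-$ by taking $\gr(D)$ together with the ``dotted'' perfect matching $M_H=\{v^+v^-:v\in V(D)\}$; then $e(H)=n+e(D)$, but to control the edge count I first pass to a \emph{balanced blowup} of $D$ (replace each vertex by a large independent set, which stays acyclic by \cref{lem:LDAG-blowup} applied to an edgeless LDAG, or just directly), so that $e(\gr(D))$ is negligible next to $n$ and $e(H)=(1+o(1))n$. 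Each induced copy of $\ldg(G,M)$ in $D$ lifts to an induced copy of $G$ in $H$ (the $2^k$ comes from the choice, at each of the $k$ matched pairs, of which endpoint plays the plus-role — actually this is already recorded by $\ldg$, so the copies are in bijection up to relabeling signs on $M$), giving $E_{\ind}(G,H)\ge (1-o(1)) 2^k k!\, \aind(\ldg(G,M))\binom{n}{k}$; dividing by $(2e(H))^k=(1+o(1))(2n)^k$ and taking $\limsup$ over blowup sizes gives $\eind(G)\ge (1-o(1))\frac{2^k k!}{k!\cdot 2^k}\cdot 2^k k!\,\aind(\ldg(G,M))/(2^k k!)$... — more carefully, $\binom nk/(2n)^k\to 1/(2^k k!)$, so $\eind(G)\ge \frac{2^k k!}{2^k k!}\cdot\aind(\ldg(G,M))\cdot\frac{1}{\abs{\Aut(G)}}\cdot\abs{\Aut(G)}$, i.e.\ $\frac{2^k k!}{\abs{\Aut(G)}}\eind(G)\ge\aind(\ldg(G,M))$ after tracking $\abs{\Aut(G)}$ via $E_{\ind}(G,H)=\abs{\Aut(G)}N_{\ind}(G,H)$.

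\textbf{Lower bound $\frac{2^k k!}{\abs{\Aut(G)}}\eind(G)\le\ind(\ldg(G,M))$.} Conversely, take host graphs $H_m$ with $e(H_m)=m$ and $E_{\ind}(G,H_m)=(1-o(1))\eind(G)\,(2m)^k$. Here is the crucial point and the main obstacle: an induced copy of $G$ in $H_m$ picks out a copy of $M$ inside $H_m$, but that copy of $M$ need not extend to a perfect matching of $H_m$, nor even to a consistent choice across different copies — so I cannot directly form a single local digraph. The fix, as in the proof of \cref{thm:eind-as-hard-as-ind} (the $H_m'\mapsto H_m$ ``pinching''), is to form a local digraph $D_m$ whose \emph{vertex set is $E(H_m)$} itself: for $e,f\in E(H_m)$ disjoint, record an edge of $D_m$ for each edge of $H_m$ between $e$ and $f$, with signs given by which endpoint of $e$ (resp.\ $f$) it meets, after fixing an arbitrary $\pm$-labeling of the two endpoints of every edge of $H_m$. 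Then $v(D_m)=m$, and every induced copy of $G$ in $H_m$ — via the distinguished perfect matching $M$ — yields an induced copy of $\ldg(G,M)$ in $D_m$; the only loss is the $2^k k!$ factor from reconstructing the vertex-labeling and sign choices on $M$, and the $\abs{\Aut(G)}$ factor converting $E_{\ind}$ to $N_{\ind}$. One must check this map is well-defined (disjointness of the $k$ matching edges guarantees the $k$ vertices of $D_m$ are distinct, and the induced-ness of $G$ in $H_m$ forces exactly the right edges and non-edges of $\ldg(G,M)$ in $D_m$), which is the routine but slightly fiddly verification. We do \emph{not} need $D_m$ acyclic for this direction, which is why the right-hand side is $\ind$ and not $\aind$. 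Counting: $N_{\ind}(\ldg(G,M),D_m)\ge \frac{1}{2^k k!}\cdot\frac{1}{\abs{\Aut(G)}}E_{\ind}(G,H_m)=(1-o(1))\frac{\eind(G)(2m)^k}{2^k k!\abs{\Aut(G)}}$, and dividing by $\binom{m}{k}=(1+o(1))m^k/k!$ and taking $\limsup$ yields $\ind(\ldg(G,M))\ge \frac{2^k k!}{\abs{\Aut(G)}}\eind(G)\cdot\frac{m^k/k!}{m^k/k!}$ after simplifying $(2m)^k/(2^k k!)\cdot k!/m^k=1$ — so $\ind(\ldg(G,M))\ge\frac{2^k k!}{\abs{\Aut(G)}}\eind(G)$ as claimed.

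\textbf{Where the work is.} The genuinely new content over \cref{thm:eind-as-hard-as-ind} is: (i) checking in the lower-bound direction that the local-digraph structure on $E(H_m)$ is exactly the right ``lossless'' refinement, i.e.\ that induced copies of $G$ respecting $M$ correspond bijectively (up to the $2^k k!$ ambiguity) to induced copies of $\ldg(G,M)$ — this uses that $\ldg(G,M)$ was defined precisely so that its graphification is $G\setminus M$; and (ii) in the upper-bound direction, confirming that when $D$ is acyclic one really does get a \emph{valid} host graph and that the blowup trick makes $e(\gr(D))=o(n)$ while preserving acyclicity via \cref{lem:LDAG-blowup}. I expect (i) — the careful bookkeeping of signs and of which endpoint-labeling of $E(H_m)$ is used — to be the main obstacle, though it is conceptually straightforward once the definitions of $\gr$ and $\ldg$ are unwound. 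The appearance of $\aind$ on the left but $\ind$ on the right is then explained: the construction direction must start from an acyclic $D$ (so that pinching its dotted matching yields a graph with $M$ still the unique fractional perfect matching, cf.\ \cref{prop:unique-pm-imply-ldag}), whereas the reduction direction imposes no acyclicity on $D_m$.
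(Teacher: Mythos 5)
Your proof of the second inequality follows the paper's route: form a local digraph on vertex set $E(H_m)$ with an arbitrary $\pm$-labelling of the endpoints of each edge, and note that each induced copy of $G$ determines, via its unique perfect matching, a $k$-subset of $E(H_m)$ that induces a copy of $\ldg(G,M)$. One bookkeeping slip: there is no $2^k k!$ loss here. The map from induced copies of $G$ to $k$-subsets of $E(H_m)$ is injective, and the isomorphism class of the induced local digraph does not depend on the arbitrary sign choices, since \cref{def:local-digraph} only records the partition of incident edges into sign classes at each vertex; hence $N_{\ind}(\ldg(G,M),D_m)\geq N_{\ind}(G,H_m)=E_{\ind}(G,H_m)/\abs{\Aut(G)}$ directly. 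If you really divided by the extra $2^k k!$ as written, the computation would only give $\ind(\ldg(G,M))\geq \eind(G)/\abs{\Aut(G)}$, which is weaker than the claimed bound, so that factor must be removed rather than ``simplified away.''

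The genuine gap is in the first inequality. A \emph{balanced} blowup of the LDAG $D$ does not make the graphification edges negligible; it does the opposite. If every vertex of $D$ is blown up to an independent set of size $t$, then the matching $M_H$ contributes $nt$ edges while $\gr$ of the blowup contributes about $e(D)t^2$ edges, so $e(H)\sim e(D)t^{2}\gg nt$; the induced copies of $G$ arising from copies of $\ldg(G,M)$ number only about $N_{\ind}(\ldg(G,M),D)\,t^{k}$, and after dividing by $(2e(H))^{k}\sim(2e(D))^{k}t^{2k}$ the normalized count tends to $0$ as $t\to\infty$. Even with no blowup at all, $e(D)$ may be of order $n^{2}$ (the extremal LDAGs are dense, e.g.\ the one used for $P_6$), so $e(H)\gg n$ and the bound is lost. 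What the paper actually does is an \emph{unbalanced} blowup guided by a topological sort (\cref{prop:topological-sort}): after relabelling so that every edge between $v_i$ and $v_j$ with $i<j$ has sign $+$ at $v_i$, one blows up $v_i^{+}$ to size $k^{\alpha_i}$ and $v_i^{-}$ to size $k^{1-\alpha_i}$ with $0<\alpha_1<\cdots<\alpha_n<1/2$; then each matching pair contributes exactly $k$ edges, while every crossing edge class has size $k^{\alpha_i+\alpha_j}$ or $k^{\alpha_i+1-\alpha_j}$, both $o(k)$ precisely because $\alpha_i<\alpha_j$. This is where acyclicity is genuinely used --- not, as you suggest, to guarantee that the pinched graph has a unique fractional perfect matching (the host graph need not have that property), but to supply the topological order that makes all non-matching edges asymptotically negligible in the edge count. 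Without this idea your construction yields no lower bound on $\eind(G)$ in terms of $\aind(\ldg(G,M))$.
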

\begin{proof}
    We first prove the second half of the inequality.
    The proof is similar to that of \cref{thm:eind-as-hard-as-ind}.
    Suppose that $H$ is a graph with $m$ edges.
    We will construct a local digraph $H'$ with $m$ vertices such that every induced copy of $G$ in $H$ gives rise to many distinct induced copies of $\ldg(G,M)$ in $H'$. This is done as follows.
    For each $e\in E(H)$, let $v_e$ be a vertex in $H'$ and if $e=\{u,v\}$, fix any bijection $\sgn_e\colon\{u,v\}\to \{\pm\}$.
    For any edges $e,e_1,e_2\in E(H)$ where $e_1,e_2$ are disjoint and $e$ goes between $e_1$ and $e_2$, we create an edge $\Tilde{e}$ between $v_{e_1},v_{e_2}$. 
    We also set
    \[\sgn(v_{e_i},\Tilde{e}) = \sgn_{e_i}(e_i\cap e)\]
    for $i\in\{1,2\}$.
     From this construction and the definition of $\ldg(G,M)$, in any induced copy of $G$ in $H$, the vertices corresponding to the unique perfect matching of the induced copy induce a local digraph isomorphic to $\ldg(G,M)$ in $H'$.
    Therefore, we know that 
    \[\frac{\abs{M}!\cdot N_{\ind}(\ldg(G,M),H')}{m^{\abs{M}}}\geq \frac{\abs{M}!\cdot N_{\ind}(G,H)}{m^{\abs{M}}}= \frac{2^{\abs{M}}\abs{M}!}{\abs{\Aut(G)}}\cdot\frac{ \abs{\Aut(G)}N_{\ind}(G,H)}{(2m)^{\abs{M}}}.\]
    By the definitions of edge inducibility for graphs and vertex inducibility for local digraphs, we get that
    \[\ind(\ldg(G,M))\geq  \frac{2^{\abs{M}}\abs{M}!}{\abs{\Aut(G)}}\eind(G),\]
    as desired.

    To show the first half of the inequality, we begin with an LDAG $H$ with $n$ vertices.
    We will construct a graph $H'$ with many induced copies of $G$ corresponding to indueced copies of $\ldg(G,M)$ in $H$, though the number of edges of $H'$ would not be $n$.
    Instead, we will construct a graph $H'$ with $(n+o_{n;k\to\infty}(1))k$ edges where $k\in\NN$ is a sufficiently large positive integer.
    To do so, let $v_1,\ldots, v_n$ be a topological sort of $H$, and also relabel the signs so that $\sgn(v_i,e)=+$ for any $e$ going between $v_i,v_j$ and $i<j$.
    Take the graphification $\gr(H)$ of $H$, and let $M_H$ be the perfect matching $\{v_1^+v_1^-,\ldots, v_n^+v_n^-\}$.
    Then any induced copy of $\ldg(G,M)$ in $H$ corresponds to an induced copy of $G$ in $\gr(H)\cup M_H$ where the unique matching $M$ of $G$ is mapped to a submatching of $M_H$.
    We can also, without losing any induced copies of $G$, assume that $\gr(H)\cup M_H$ is simple.
    The graph $H'$ will then be chosen to be an appropriate blowup of $\gr(H)\cup M_H$ constructed as follows.
    Fix some rational numbers $0<\alpha_1<\cdots <\alpha_n<1/2$, and let $k$ be a large positive integer such that $k^{\alpha_i}$ is also a positive integer for any $i\in[n]$.
    Let $H'$ be the blowup of $\gr(H)\cup M_H$ where $v_i^+$ is replaced by an independent set of size $k^{\alpha_i}$ and $v_i^-$ is replaced by an independent set of size $k^{1-\alpha_i}$.
    Then the number of edges in $H'$ is at least $nk$ and can be upper bounded by
    \[nk+\sum_{1\leq i<j\leq n}k^{\alpha_i}\left(k^{\alpha_j}+k^{1-\alpha_j}\right) = (n+o_{n;k\to\infty}(1))k.\]
    This shows that $e(H') = (n+o_{n;k\to\infty}(1))k$.
    Moreover, for any induced copy of $G$ in $\gr(H)\cup M_H$ whose unique perfect matching $M$ is a subset of $M_H$, it is blown-up to $k^{\abs{M}}$ induced copies in $H'$.
    Therefore,
    \[ N_{\ind}(\ldg(G,M),H)\leq  \frac{N_{\ind}(G,H')}{k^{\abs{M}}}
        =(1+o_{n,\abs{M};k\to\infty}(1))(2n)^{\abs{M}}\cdot \frac{N_{\ind}(G,H')}{(2e(H'))^{\abs{M}}}.\]
    By taking $k$ to infinity, we see that
    \[N_{\ind}(\ldg(G,M),H)\leq \frac{(2n)^{\abs{M}}}{\abs{\Aut(G)}}\eind(G),\]
    and so
     \[\frac{\abs{M}!\cdot N_{\ind}(\ldg(G,M),H)}{n^{\abs{M}}}\leq \frac{2^{\abs{M}}\abs{M}!}{\abs{\Aut(G)}}\eind(G).\]
    By the definition of acyclic vertex inducibility, we see that $\aind(\ldg(G,M))\leq \frac{2^{\abs{M}}\abs{M}!}{\abs{\Aut(G)}}\eind(G)$, as desired.
\end{proof}

We remark that now \cref{thm:eind-as-hard-as-ind} becomes a corollary of this theorem.
Indeed, if $G$ is a graph and $G'$ is obtained by adding a pendant edge to each vertex of $G$, then the pendant edges form a perfect matching $M$ for $G'$ that is the unique fractional perfect matching.
In addition, $\ldg(G',M)$ is a local digraph that is $G$ as a graph, and all signs of the edges at any endpoint are the same.
In this case, it is easy to verify that
\[\aind(\ldg(G',M))=\ind(\ldg(G',M)) = \ind(G).\]

In general, in the setting of \cref{thm:aind-eind-ind}, if we know that $\aind(\ldg(G,M)) = \ind(\ldg(G,M))$, then we immediately know the value of $\eind(G)$.
This will be used later to determine edge inducibilities of some small graphs.
For now, we first apply \cref{thm:aind-eind-ind} to prove \cref{thm:better-construction-intro}, which is an improved lower bound compared to the simple lower bound given by \cref{cor:simple bound}.

\begin{proof}[Proof of \cref{thm:better-construction-intro}]
    We first note that $\aind(G') \geq \frac{k!}{k^k-k}$ for any LDAG $G'$ on $k$ vertices.
    Indeed, we can simply take the iterated blowups of $G'$, which are still acyclic by \cref{lem:LDAG-blowup}.
    The iterated blowups then show that $\aind(G')\geq \frac{k!}{k^k-k}$, as desired.

    Now by \cref{prop:unique-pm-imply-ldag}, we know that $\ldg(G,M)$ is an LDAG on $\abs{M}=k$ vertices.
    By \cref{thm:aind-eind-ind}, we thus have
    \[\eind(G)\geq \frac{\abs{\Aut(G)}}{2^{k}\cdot k!}\aind(\ldg(G,M))\geq \frac{\abs{\Aut(G)}}{2^{k}\cdot k!}\cdot \frac{k!}{k^{k}-k} = \frac{\abs{\Aut(G)}}{2^{k}\left(k^{k}-k\right)},\]
    which is the desired inequality.
\end{proof}

\begin{remark}
    In the recent independent work of Wang, Zhao, and Lu \cite{WanZhaLu2025}, they prove that 
    \[\frac{1}{2^{t-1}(2t+1)^{t-1}}\leq \eind(P_{2t})\leq \frac{1}{2^t(t-1)^{t-1}}\]
    when $t\geq 2$, and conjecture that the lower bound is tight.
    However, by \cref{thm:better-construction-intro} and the fact that $P_{2t}$ has a perfect matching $M$ that is the unique fractional perfect matching, we see that in fact 
    \[\eind(P_{2t})\geq \frac{1}{2^{t-1}(t^t-t)},\]
    disproving their conjecture for all $t\geq 2$.
    This lower bound also determines $\eind(P_{2t})$ within a multiplicative factor of $O(t)$.
    We also remark that, as observed by Even-Zohar and Linial \cite{EveLin2015}, an appropriate modification of Pippenger and Golumbic's upper bound for inducibilities of cycles \cite[Theorem 9]{PipGol1975} yields $\ind(P_t)\leq \frac{t!}{2(t-1)^{t-1}}$.
    Since $\ldg(P_{2t},M)$ is the locally directed path $LDP_t$ on $t$ vertices, \cref{thm:aind-eind-ind} gives
    \[\eind(P_{2t})\leq \frac{2}{2^t\cdot t!}\ind(LDP_t)\leq \frac{2}{2^t\cdot t!}\ind(P_t)\leq \frac{1}{2^t(t-1)^{t-1}},\]
    recovering Wang, Zhao, and Lu's upper bound.
    We suspect that $\ind(LDP_t)$ is always strictly smaller than $\ind(P_t)$ when $t\geq 3$, which would suggest that the upper bound is not tight either.
\end{remark}

We end with the particular case $G=P_6$, where we have an even better construction also coming from an application of \cref{thm:aind-eind-ind}.
We will also prove an upper bound on $\eind(P_6)$ that is better than what we obtained in the previous remark.

\begin{theorem}\label{thm:P6}
    We have
    \[\frac{5}{372}\leq \eind(P_6) \leq \frac{1}{36}.\]
\end{theorem}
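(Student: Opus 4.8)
The plan is to derive both inequalities from \cref{thm:aind-eind-ind}. The path $P_6=v_1v_2\cdots v_6$ has a unique perfect matching $M=\{v_1v_2,v_3v_4,v_5v_6\}$, which, being forced starting from the degree-one vertices ($w(v_1v_2)=1$, hence $w(v_2v_3)=0$, hence $w(v_3v_4)=1$, and so on), is also the unique fractional perfect matching; moreover $\abs{\Aut(P_6)}=2$, $\abs{M}=3$, and (as recorded just before the statement) $\ldg(P_6,M)=LDP_3$ is the locally directed path on three vertices. Hence \cref{thm:aind-eind-ind} gives
\[
    \frac{1}{24}\,\aind(LDP_3)\ \le\ \eind(P_6)\ \le\ \frac{1}{24}\,\ind(LDP_3),
\]
so it is enough to prove $\aind(LDP_3)\ge \tfrac{10}{31}$ and $\ind(LDP_3)\le \tfrac23$.

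For the lower bound I would exhibit an explicit locally directed acyclic graph $D$ whose induced-$LDP_3$ density equals $\tfrac{10}{31}$; equivalently, feeding $D$ into the construction in the second half of the proof of \cref{thm:aind-eind-ind} produces the desired $P_6$-host. The most natural family of candidates is iterated blow-ups: these remain acyclic by \cref{lem:LDAG-blowup}, $LDP_3$ is twin-free, and for a pattern $F$ on $t$ vertices the induced-$LDP_3$ density of the iterated blow-up equals $\tfrac{6\,N_{\ind}(LDP_3,F)}{t^3-t}$, with an analogous expression for weighted iterated blow-ups. The iterated blow-up of $LDP_3$ itself only gives $\tfrac14<\tfrac{10}{31}$, so one has to work with a larger acyclic pattern (or a weighted variant), and the remaining task is the finite computation of writing down the right $F$, counting its induced copies of $LDP_3$, and optimizing the blow-up weights.

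For the upper bound I would show that every local digraph $H$ on $n$ vertices has at most $(\tfrac23+o(1))\binom n3$ induced copies of $LDP_3$. A $\pm$-edge of $H$ that is not an arc can only destroy induced copies of $LDP_3$, so we may assume $H$ is an ordinary digraph. The point is to exploit the intransitivity built into $LDP_3$: an induced copy on $\{a,b,c\}$ has, after relabeling, arcs exactly $a\to b$ and $b\to c$ and no arc between $a$ and $c$. Counting ordered triples $(a,b,c)$ of this form, the contribution of a fixed ``middle'' vertex $b$ is $\abs{N^-(b)}\cdot\abs{N^+(b)}$ minus the number of arcs from $N^-(b)$ to $N^+(b)$, and one plays this quantity off against the number of transitive triples through $b$; alternatively, following the entropy approach of the proof of \cref{thm:C5}, one takes a uniformly random induced $LDP_3$ written as $(X\to Y\to Z)$, expands $\HH(X,Y,Z)=\HH(Y)+\HH(X\mid Y)+\HH(Z\mid X,Y)$, and uses the constraint $X\not\to Z$ to control the last term. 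I expect this upper bound to be the crux of the theorem: it is a genuine inducibility-type statement for a three-vertex digraph, the gap between $\tfrac{10}{31}$ and $\tfrac23$ suggests that neither bound is tight, and pushing the counting (or entropy) argument through the intransitivity constraint is where the real difficulty lies.
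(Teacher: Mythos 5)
Your reduction is exactly the paper's: $M=\{v_1v_2,v_3v_4,v_5v_6\}$ is the unique fractional perfect matching, $\abs{\Aut(P_6)}=2$, $\ldg(P_6,M)=LDP_3$, and \cref{thm:aind-eind-ind} turns the statement into the two bounds $\aind(LDP_3)\geq\frac{10}{31}$ and $\ind(LDP_3)\leq\frac23$. But those two bounds are the actual content of the theorem, and your proposal proves neither; moreover the one concrete step you do commit to in the upper bound is incorrect. The claim that a same-signed edge of the host ``can only destroy induced copies of $LDP_3$, so we may assume $H$ is an ordinary digraph'' fails under \cref{def:local-digraph}: isomorphism of local digraphs only compares signs of pairs of edges sharing a vertex, so the signs at the two degree-one endpoints of $LDP_3$ are immaterial. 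An edge carrying the same sign at both ends can therefore be one of the two edges of an induced $LDP_3$ (all that matters is that the two edges disagree at the middle vertex), so deleting such edges can destroy copies and the maximization cannot be restricted to digraph hosts this way; had it been, $\ind(LDP_3)$ would already follow from the announced $\ind(DP_3)=\frac25$, which the authors pointedly do not claim. The paper instead bounds $N_{\ind}(LDP_3,H)$ directly over local digraph hosts by a short double count: with $A$ the number of triples $\{u,v,w\}$ having $uv,uw\in E(H)$ with opposite signs at $u$, and $B$ the number with $uv\in E(H)$, $uw\notin E(H)$, every induced copy is counted once in $A$ and twice in $B$, so $N_{\ind}\leq\min\{A,\tfrac12 B\}\leq\tfrac13(A+B)$, and summing the per-vertex contribution $\tfrac13(d_H(u)/2)^2+\tfrac13 d_H(u)(n-d_H(u))\leq\tfrac19 n^2$ gives $N_{\ind}\leq\tfrac19 n^3$, i.e.\ $\ind(LDP_3)\leq\frac23$. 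So this half is a two-line count, not the crux you anticipated.

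The genuinely substantive half is the lower bound, which you leave as ``the remaining task'': you only note that the iterated blowup of $LDP_3$ gives $\frac14<\frac{10}{31}$ and hope some finite pattern plus blowup weights will do. The paper's construction is not of the fixed-pattern form your formula $6N_{\ind}(LDP_3,F)/(t^3-t)$ covers: for each $k$ it builds an LDAG $G_k$ on $4k$ vertices (vertices $v_1,\ldots,v_{2k},u_1,u_2$ joined exactly when indices have opposite parity, plus $u_1u_2$, signed so that $v_1,\ldots,v_{2k},u_1,u_2$ is a topological sort, and then $u_1,u_2$ blown up into independent sets of size $k$), whose induced-$LDP_3$ density tends to $\frac{5}{16}$ only as $k\to\infty$; one then recursively inserts the previous construction into the two blown-up classes only, each of relative size $\frac14$, losing no copies, which yields the geometric series $\frac{5}{16}\sum_{j\geq0}(2/4^3)^j=\frac{10}{31}$. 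Without this (or some other) explicit acyclic construction and with the flawed digraph reduction in the upper bound, the proposal establishes neither inequality of the theorem.
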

\begin{proof}
    Let $M$ be the unique matching of $P_6$ with three edges.
    It is clear that $P_6$ has no other fractional perfect matchings, and so we can apply \cref{thm:aind-eind-ind}.
    Note that $\abs{M} = 3$ and $\abs{\Aut(P_6)}=2$.
    We also know what $\ldg(P_6,M)$ is from \cref{fig:ldg}: it is the locally directed path on three vertices, which we denote as $LDP_3$.
    Then by \cref{thm:aind-eind-ind}, it suffices to show that $\ind(LDP_3)\leq\frac{2}{3}$ and $\aind(LDP_3)\geq \frac{10}{31}$.

    We first show that $\ind(LDP_3)\leq \frac{2}{3}$ using a simple double counting argument.
    For any local digraph $H$ on $n$ vertices, we will show that there are at most $\frac{n^3}{9}$ induced copies of $LDP_3$.
    We first note that we may assume $H$ is simple.
    Now let $A$ be the number of triplets $\{u,v,w\}$ with $uv,uw\in E(G)$, $\sgn(u,uv)=+$ and $\sgn(u,uw)=-$.
    Let $B$ be the number of triplets $\{u,v,w\}$ with $uv\in E(G), uw\not\in E(G)$.
    It is clear $N_{\ind}(LDP_3,H)\leq \min\{A,\frac{1}{2}B\}\leq \frac{1}{3}A+\frac{1}{3}B$.
    Hence,
    \begin{align*}
        N_{\ind}(LDP_3,H)\leq &\sum_{u\in V(H)}\left(\frac{1}{3}\left(\frac{d_H(u)}{2}\right)^2+\frac{1}{3}d_H(u)(n-d_H(u))\right)\\ = &\sum_{u\in V(H)}\left(\frac{1}{3}d_H(u)n-\frac{1}{4}d_H(u)^2\right)\\ =&\sum_{u\in V(H)}\left(\frac{1}{9}n^2-\left(\frac{1}{3}n-\frac{1}{2}d_H(u)\right)^2\right)\\
        &\leq \frac{1}{9}n^3,
    \end{align*}
    as desired.

    Now, to show that $\aind(LDP_3)\geq \frac{10}{31}$, we will begin by constructing, for each $k$, an LDAG $G_k$ on $4k$ vertices so that $N_{\ind}(LDP_3,G_k)= (\frac{5}{16}+o(1))\binom{4k}{3}$.
    Later, we will iteratively blowup this construction to get the desired construction.
    
    The construction of $G_k$ goes as follows.
    First, consider $2k+2$ vertices $v_1,v_2,\ldots, v_{2k},u_1,u_2$.
    Connect $v_iv_j$ if $2\nmid i-j$, connect $u_1u_2$ and connect $v_iu_j$ if $2\nmid i-j$ as well.
    We will orient the edges locally so that $v_1,\ldots, v_{2k},u_1,u_2$ is a valid topological sort.
    To achieve this, set $\textup{sgn}(v_i,v_iv_j)=+$ if $i<j$ and $2\nmid i-j$ and $\textup{sgn}(v_i,v_iu_j)=+$ if $2\nmid i-j$.
    For $i>j$ with $2\nmid i-j$, we set $\textup{sgn}(v_i,v_iv_j)=-$.
    Finally, we set $\textup{sgn}(u_j,v_iu_j)=-$ for every $2\nmid i-j$ and $\textup{sgn}(u_i,u_iu_j)=+$ for every permutation $(i,j)$ of $(1,2)$.
    This clearly makes the graph an LDAG as $v_1,\ldots, v_{2k},u_1,u_2$ is a topological sort.
    The LDAG $G_k$ is where we blowup $u_1$ and $u_2$ with independent sets $\{u_1^{(1)},\ldots, u_1^{(k)}\},\{u_2^{(1)},\ldots, u_2^{(k)}\}$ of size $k$.

    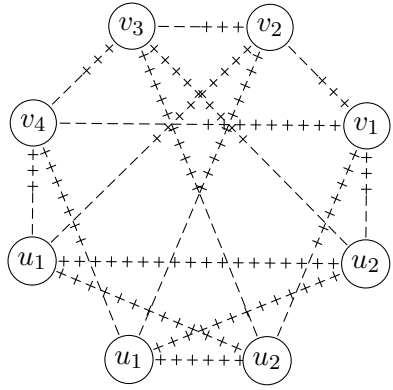
\begin{figure}[h]
    \begin{center}
        \begin{tikzpicture}[
  plusmarks/.style={
    postaction=decorate,
    decoration = {
            markings,
            mark = 
                between positions 0 and 1 step 6pt 
                with
                {
                    \draw (0pt, 2pt) -- (0pt, -2pt);
                    \draw (-2pt, 0pt) -- (2pt, 0pt);
                }
            }
  },
  minusmarks/.style={
    postaction=decorate,
    decoration = {
            markings,
            mark = 
                between positions 0 and 0.97 step 6pt 
                with
                {
                    \draw (-2pt, 0pt) -- (2pt, 0pt);
                }
            }
  },
  halfplus/.style={
    postaction=decorate,
    decoration={
            markings,
            mark = 
                between positions 0.08 and 0.45 step 6pt 
                with
                {
                    \draw (0pt, 2pt) -- (0pt, -2pt);
                    \draw (-2pt, 0pt) -- (2pt, 0pt);
                }
    }
  },
  halfminus/.style={
    postaction=decorate,
    decoration={markings,
      mark=between positions 0.55 and 0.92 step 6pt with
        {
                    \draw (-2pt, 0pt) -- (2pt, 0pt);}
    }
  }
        ]
          
          \foreach \i/\lab/\nd in {22/v_1/v1,67/v_2/v2,112/v_3/v3,157/v_4/v4,202/u_1/u11,247/u_1/u12,292/u_2/u21,337/u_2/u22}
          \coordinate (\nd) at (\i:2.4cm);
        \foreach \i/\ndi in {1/v1,3/v3}
        {
            \foreach \j/\ndj in {2/v2,4/v4}
            {
                \coordinate (midpoint\ndi\ndj) at ($(\ndi)!0.5!(\ndj)$);
                \ifthenelse{\i < \j}{
                \path[plusmarks] (\ndi)--(midpoint\ndi\ndj);
                \path[minusmarks] (\ndj)--(midpoint\ndi\ndj);
                }{\path[plusmarks] (\ndj)--(midpoint\ndi\ndj); 
                \path[minusmarks] (\ndi)--(midpoint\ndi\ndj);}
            }

            \foreach \j in {1,2}
            {
                \coordinate (midpoint\ndi u2\j) at ($(\ndi)!0.5!(u2\j)$);
                \path[plusmarks] (\ndi)--(midpoint\ndi u2\j);
                \path[minusmarks] (u2\j)--(midpoint\ndi u2\j);
            }
        }

        \foreach \i/\ndi in {2/v2,4/v4}
        {
            \foreach \j in {1,2}
            {
                \coordinate (midpoint\ndi u1\j) at ($(\ndi)!0.5!(u1\j)$);
                \path[plusmarks] (\ndi)--(midpoint\ndi u1\j);
                \path[minusmarks] (u1\j)--(midpoint\ndi u1\j);
            }
        }

        \foreach \i in {1,2}
        {
            \foreach \j in {1,2}
            \path[plusmarks] (u1\i)--(u2\j);
        }
          \foreach \i/\lab/\nd in {22/v_1/v1,67/v_2/v2,112/v_3/v3,157/v_4/v4,202/u_1/u11,247/u_1/u12,292/u_2/u21,337/u_2/u22}
            \node[circle, draw, fill=white, inner sep=2pt]at (\nd) {$\lab$};
        \end{tikzpicture}
    \end{center}
    \caption{The local digraph $G_2$}
    \label{fig:G2}
    \end{figure}

    Now let us count the number of induced $LDP_3$ in $G_k$.
    For each vertex, we count the number of induced $LDP_3$ using it as the middle vertex.
    For each $y\in[2k]$, it is clear that $v_xv_yv_z$ is an induced $LDP_3$ for each $x<y<z$ with $2\nmid x-y, 2\nmid z-y$.
    Moreover, it is also clear that $v_xv_yu_z^{(i)}$ is an induced $LDP_3$ for each $x<y$ and $z\in\{1,2\}$ with $2\nmid x-y, 2\nmid z-y$ and $i\in[k]$.
    This gives
    \[\left\lfloor\frac{y}{2}\right\rfloor\left(\left\lfloor\frac{2k+1-y}{2}\right\rfloor+k\right)\]
    induced $LDP_3$ using $v_y$ as the middle vertex.
    Now for each $y\in\{1,2\}$ and $i\in[k]$, we see that $v_xu^{(i)}_yu^{(j)}_z$ is an induced $LDP_3$ as well for each $x$ and $z$ with  $2\nmid x-y, 2\nmid z-y$, $z\in\{1,2\}$ and $j\in[k]$.
    This gives $k^2$ induced $LDP_3$.
    In total, we get
    \[\sum_{y=1}^{2k}\left\lfloor\frac{y}{2}\right\rfloor\left(\left\lfloor\frac{2k+1-y}{2}\right\rfloor+k\right)+2k\cdot k^2 = k\sum_{y=1}^{2k}y-\frac{1}{4}\sum_{y=1}^{2k}y^2+2k^3+O(k^2)=\frac{10}{3}k^3+O(k^2).\]
    Therefore, as $k$ tends to infinity, the fraction of triplets that induce $LDP_3$ in $G_k$ tends to 
    \[\frac{10\cdot 6}{3\cdot 4^3}= \frac{5}{16}\]
    as $k$ goes to infinity.
    
    Now we can do an iterated blowup as follows: for each $t$, starting with $H_{4^0}=G_1$, set $H_{4^t}$ to be $G_{4^t}$ union one copy of $H_{4^{t-1}}$ on the copies of $u_1$ and another copy of $H_{4^{t-1}}$ on the copies of $u_2$.
    By \cref{lem:LDAG-blowup} and a simple induction, we know that $H_{4^t}$ is an LDAG for every $t$.
    Moreover, whenever we insert $H_{4^{t-1}}$ in $G_{4^t}$, we do not lost any induced copies of $LDP_3$ in $G_{4^t}$, as no induced copies of $LDP_3$ use two copies of $u_1$ or two copies of $u_2$.
    The fraction of triplets that induce $LDP_3$ in $H_{4^t}$ increases to
    \[\sum_{k=0}^{\infty}\frac{5}{16}\left(\frac{2}{4^3}\right)^k = \frac{10}{31}\]
    as $t$ goes to infinity, showing that $\aind(LDP_3)\geq \frac{10}{31}$, as desired.
\end{proof}

\section{Small graphs}\label{section: small graphs}
In this section, we study the edge inducibilities of small graphs including graphs on at most $4$ vertices and cycles on five vertices.
\subsection{Graphs on at most $4$ vertices} \label{section: graphs smaller than 4}

In the vertex inducibility problem, there has been plenty of interest in studying it for small graphs \cite{EveLin2015, Hir2014}. With the exception of $P_4$, the vertex inducibility of all other graphs on at most 4 vertices is known \cite{EveLin2015}. Here we can similarly solve the edge inducibility of all graphs on at most 4 vertices, for which we split into the following categories as shown in \cref{tab:small-summary}.
\begin{table}[htbp]
  \centering
  \caption{$\eind$ of graphs on at most 4 vertices.}
  \label{tab:small-summary}
\resizebox{\textwidth}{!}{%
\begin{tabular}{L{2.4cm} C{4cm} C{2.0cm} L{4.4cm} C{4cm}}
\toprule
\textbf{$G$} & \textbf{Picture of $G$} & \textbf{$\eind(G)$} & \textbf{Extremal host graph $H$} & \textbf{Picture of $H$} \\
\midrule
$K_{1,2},\, K_{1,3}$
  & \GpThree\; and  \GstarKoneThree 
  & $\frac{1}{4}$, \, $\frac{1}{8}$& Star &  \parbox[c]{\linewidth}{\centering
  \GstarKoneSix} \newline \\

$C_4$
  & \GcFour
  & $\frac{1}{2}$ & Balanced complete bipartite graph &   \parbox[c]{\linewidth}{\centering
  \GkFourFour} \newline \\

$e+e$
  & \Gedgeedge
  & 1 & Matching &   \parbox[c]{\linewidth}{\centering
  \GmatchingFour} \newline \\

$e,\, K_3,\, K_4$
  & \Gedge\; and  \GkThree\; and \GkFour
  & 1 & Complete graph & \parbox[c]{\linewidth}{\centering
  \GkSix} \newline \\

$P_4$
  & \GpFour
  & $\frac{1}{4}$ & Construction by local digraph & \parbox[c]{\linewidth}{\centering
  \GkSixWithSixPendants} \newline \\

$K_3^+$
  & \GkThreePlusE
  & $\frac{1}{4}$ & Construction by local digraph & \parbox[c]{\linewidth}{\centering
  \GTrianglePendant} \newline \\

$K_4^-$
  & \GkFourMinusE
  & $\frac{1}{4}$ & Book graph, complete bipartite graph with one part being a clique ($K_a \lor \overline{K_b}$, with $b \gg a$) & \GkFiveFiveSplit \\
\bottomrule
\end{tabular}}
\end{table}
\begin{remark}
    Note that edge inducibility only makes sense for graphs with no isolated vertices; in fact, the edge inducilibity of a graph with isolated vertices is $\infty$.
    For this reason, in this section, we assume that all graphs $G$ have no isolated vertices.
\end{remark}

\begin{theorem}
    The values of $\eind(G)$ for the graphs $G$ with at most 4 vertices are as in Table~\ref{tab:small-summary}. 
\end{theorem}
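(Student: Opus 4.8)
The plan is to verify Table~\ref{tab:small-summary} row by row, grouping the seven cases by the technique they require. For each graph $G$ on at most four vertices one first computes $\alpha^*(G)$ and $|\Aut(G)|$ (both are routine), and then establishes a matching lower and upper bound on $\eind(G)$.

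\textbf{The easy rows.} For a single edge $e$, $\alpha^*(e)=1$ and the host $H=K_m$ already achieves $E_{\ind}(e,K_m)=2m$, so $\eind(e)=1$; the same host works for $K_3$ and $K_4$ since a complete graph on $t$ vertices with $m=\binom t2$ edges contains $\binom tr r!$ induced copies of $K_r$, and $(2m)^{\alpha^*(K_r)}=(t^2-t)^r\sim t^{2r}$, giving ratio $\to 1$. For the disjoint union $e+e$ of two edges, $\alpha^*=2$, and the matching $M_m$ on $m$ edges has $E_{\ind}(e+e,M_m)=2(m^2-m)\cdot 2$ (ordered pairs of distinct edges, times automorphisms), so the ratio tends to $1$; the upper bound $\le 1$ is \cref{cor:simple bound}. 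For the stars $K_{1,2}=P_3$ and $K_{1,3}$ one takes $H$ a star $K_{1,m}$: then $m$ edges, $\alpha^*(K_{1,s})=s$, and $E_{\ind}(K_{1,s},K_{1,m})=s!\binom ms\sim m^s$, so the ratio tends to $s!/(2^s s!)\cdot$ (appropriate constant) $=1/4$ for $s=2$ and $1/8$ for $s=3$; matching upper bounds come from the two-edge (resp.\ three-edge) sub-matching: every copy of $K_{1,s}$ contains $\binom s2$ (resp.\ $\binom s3$) induced sub-matchings of size $2$ (resp.\ non-adjacent pairs/triples), actually cleaner to just bound $E_{\ind}(K_{1,s},H)$ by counting ordered $s$-tuples of pairwise non-adjacent edges sharing a common vertex, which is $\le (2m)^s/(\text{something})$ — I would phrase it as in Example~\ref{ex:P4} by observing $\alpha^*$ is achieved on the leaves and bounding induced embeddings by tuples of mutually non-adjacent edges.

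\textbf{$C_4$.} Here $\alpha^*(C_4)=2$, $|\Aut(C_4)|=8$. The lower bound construction is the balanced complete bipartite graph $K_{t,t}$: it has $m=t^2$ edges and $N_{\ind}(C_4,K_{t,t})=\binom t2^2$, so $8 N_{\ind}/(2m)^2 = 8\binom t2^2/(4t^4)\to 1/2$. For the upper bound I would use that a $4$-cycle has two disjoint edges in $2$ ways, and bound $E_{\ind}(C_4,H)$ above by counting ordered pairs of vertex-disjoint edges with the required cross-edges present and cross-non-edges absent; a short convexity argument (or directly the Kruskal–Katona/Alon bound from \cref{thm: Alon} combined with counting that each $C_4$ uses a specific pair of opposite edges) gives $\le \tfrac12(2m)^2(1+o(1))$. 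This is essentially the computation implicit in \cite{BolNarTac1986,Coh2024}, so I would cite it and sketch.

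\textbf{The three rows needing local digraphs: $P_4$, $K_3^+$, $K_4^-$.} For $K_3^+$ and $P_4$, which are exactly the content of \cref{thm:local-digraph-small-case}, the value $\tfrac14$ is proved elsewhere in the paper via \cref{thm:aind-eind-ind} (for $P_4$ it is Example~\ref{ex:P4}, for $K_3^+$ one checks $K_3^+$ has a perfect... wait, $K_3^+$ has $4$ vertices but is not matchable by a unique fractional perfect matching in the obvious way — actually $K_3^+ = K_3$ with a pendant; its perfect matching is the pendant edge plus one triangle edge, and one verifies this is the unique fractional perfect matching, then $\ldg(K_3^+,M)$ is a two-vertex local digraph for which $\aind=\ind$ can be computed by hand, yielding $\eind(K_3^+)=|\Aut|/(2^2\cdot\text{blowup constant})$). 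For $K_4^-$ (i.e.\ $K_4$ minus an edge, the ``diamond''), $\alpha^*=2$, and the extremal host is $K_a\vee\overline{K_b}$ (a clique joined completely to an independent set) with $b\gg a$: counting induced diamonds (pick $2$ clique vertices and $2$ independent vertices, but ensuring the two independent vertices are non-adjacent, which is automatic) gives the claimed $\tfrac14$; the matching upper bound again comes from the two vertex-disjoint edges in $K_4^-$ and a counting/convexity argument, or from recognizing $K_4^-$ as having a unique fractional perfect matching and applying \cref{thm:aind-eind-ind} with $\ldg(K_4^-,M)$ a small local digraph whose acyclic and ordinary inducibilities coincide.

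\textbf{Main obstacle.} The routine parts (complete graphs, matchings, stars) are immediate. The real work is the three upper bounds $\eind(C_4)\le\tfrac12$, $\eind(K_4^-)\le\tfrac14$, and confirming the local-digraph computations for $P_4,K_3^+,K_4^-$ give exactly $\aind=\ind$ for the relevant two- and three-vertex local digraphs — the latter because small local digraphs can still have several non-isomorphic types and one must check that the iterated-blowup lower bound is actually optimal, i.e.\ prove a matching double-counting upper bound for each (exactly the kind of argument carried out for $LDP_3$ in the proof of \cref{thm:P6}, but now needing to be tight rather than off by a factor). I would organize the proof so that $P_4,K_3^+$ are dispatched by citing \cref{thm:local-digraph-small-case} and the remaining cases $C_4,K_4^-,K_{1,2},K_{1,3},e,e+e,K_3,K_4$ are handled by the elementary host constructions plus the Alon–Friedgut–Kahn upper bound of \cref{thm: Alon} specialized to each $G$, checking in each case that the extremal $H$ in the table realizes the bound asymptotically.
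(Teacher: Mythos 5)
Your proposal follows essentially the same route as the paper: the same extremal hosts (stars, $K_{a,b}$, a large matching, complete graphs, $K_a\vee\overline{K_b}$, and the local-digraph treatment of $P_4$ and $K_3^+$ via \cref{thm:aind-eind-ind}), and the same upper-bound mechanism, namely the double count through the perfect matchings of $G$ (\cref{claim:upper-bound}, used with $k=2$ for both $C_4$ and $K_4^-$) together with the trivial bound that an $m$-edge host has at most $\binom{m}{s}$ induced $s$-edge subgraphs for the stars.

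Two local points should be fixed before this stands. First, your fallback for $K_4^-$ --- ``recognizing $K_4^-$ as having a unique fractional perfect matching and applying \cref{thm:aind-eind-ind}'' --- would fail: the diamond has \emph{two} perfect matchings, and every convex combination of them (weight $x$ on one, $1-x$ on the other, $0$ on the central edge) is a fractional perfect matching, so the hypothesis of \cref{thm:aind-eind-ind} is not satisfied; the correct upper bound is exactly your first option, the two-perfect-matchings double count, and for the same reason $K_4^-$ should be removed from the list of ``local-digraph'' cases in your closing paragraph. Second, the star upper-bound discussion is muddled: a star contains no two disjoint edges, so there are no induced sub-matchings of size $2$ inside $K_{1,s}$, and ``pairwise non-adjacent edges sharing a common vertex'' is contradictory; the clean argument (and the paper's) is simply that an induced copy of $K_{1,s}$ is determined by its $s$ edges, giving $N_{\ind}(K_{1,s},H)\le\binom{m}{s}$ and hence the value $2^{-s}$.
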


\begin{proof} 
The following claim will be helpful in upper bounding the edge inducibility of the various small graphs. 
\begin{claim}\label{claim:upper-bound}
    Let $G$ be a graph containing $k$ distinct perfect matchings. Then \[\eind(G,m) \leq \binom{m}{\frac{v(G)}{2}} \cdot \frac{1}{k}.\]
\end{claim}
\begin{proof}
    Note that given a matching of size $v(G)/2$, its vertices could span at most a single induced copy of $G$. Furthermore, since $G$ contains $k$ distinct perfect matchings, it follows by double counting that for any host graph $H$ with $m$ edges, we have 
    \[ k N_{\ind}(G,H) \leq \binom{m}{\frac{v(G)}{2}},\]
    which plugging into the definition of $\eind(G,m)$ gives the desired conclusion.
\end{proof}
    \begin{enumerate}[leftmargin=*,topsep=0pt]
        \item [(1)] ($K_{1,2}$ and $K_{1,3}$) In general, we claim that $\eind(K_{1,t}) = 2^{-t}$ so when $t= 2$ we get $\eind(K_{1,2}) = \frac{1}{4}$\linebreak and when $t = 3$ we get $\eind(K_{1,3}) = \frac{1}{8}$. For the lower bound, when the host graph is the star $K_{1,m}$, we note that $N_{\ind}(K_{1,t}) = \binom{m}{t}$ and consequently, $\eind(K_{1,t}) \geq \lim_{m \to \infty}\frac{t!}{(2 m)^{t}}\binom{m}{t} = 2^{-t}$. For the upper bound, the number of induced $t$-edge subgraphs of an $m$-edge graph is at most $\binom{m}{t}$. In particular, $\eind(K_{1,t}) \leq \limsup_{m\to \infty} \frac{t!}{(2m)^{t}} \binom{m}{t}=2^{-t}$.
        
        \item [(2)] ($C_4$) While this was basically also studied by Bollob\'as, Nara and Tachibana \cite{BolNarTac1986}, which computed $\eind(K_{t,t},m)$ quite precisely, here we give a simpler proof that suffices for our purposes of showing that $\eind(K_{t,t}) = 2^{1-t}$. When $t = 2$ this recovers the special case $\eind(C_4) = \frac{1}{2}$. Recall that $\abs{\Aut(K_{t,t})}=2(t!)^2$ and that $\alpha^*(K_{t,t})=t$. For the lower bound, considering the host graph $K_{a,b}$ with $ab = m$ and $a,b \to \infty$ as $m\to\infty$, we get 
        \[
            \eind (K_{t,t})\geq \limsup_{m\to \infty}\frac{2(t!)^2}{(2m)^t} \eind(K_{t,t},m) \ge 
            \lim_{m\to \infty}\frac{2(t!)^2}{(2m)^t}\binom{a}{t}\binom{b}{t} = 2^{1-t}. 
        \]
    
        For the upper bound, since $K_{t,t}$ contains $t!$ distinct perfect matchings we have by Claim~\ref{claim:upper-bound} that $\eind(K_{t,t}) \le \lim_{m\to \infty} \frac{2(t!)^2}{t!(2m)^{t}}\binom{m}{t} = 2^{1-t}$.
    \item [(3)] ($e+e$) By taking the host graph $H$ as a large matching, it is clear that for every $t$ if $G$ is a matching on $t$ edges then $\eind(G) = 1$.  
    \item [(4)] ($K_2, \,  K_3,\, K_4$) The upper bound in  Theorem \ref{thm: Alon} is clearly achieved for cliques, and therefore our choice of normalization in $\eind(\cdot)$ gives $\eind(K_t) = 1$ for all $t$. 
    \item [(5)] ($P_4$) The case $P_4$ was already done in \cref{ex:P4}, but we will now use terminologies in \cref{section: local digraph} and apply \cref{thm:aind-eind-ind} to demonstrate how local digraphs can be useful.
    Let $P_4 = e_1- e_2 - e_3$ so that there is a unique perfect matching $M=\{e_1,e_3\}$ in $P_4$, and $\ldg(P_4, M) = e$.
    See \cref{fig:path-four} for reference.
    We have $\eind(P_4) = \frac{\ind(e)}{4} = \frac{1}{4}$ by Theorem \ref{thm:aind-eind-ind} since $\ldg(P_4, M) = e$ is undirected which clearly implies that $\aind(\ldg(P_4,M)) = \ind(\ldg(P_4, M)) = 1$.

    \begin{figure}[!htp]
        \centering
        \scalebox{0.65}{
\begin{tikzpicture}[x=1cm,y=1cm,
  plusmarks/.style={
    postaction=decorate,
    decoration = {
            markings,
            mark = 
                between positions 0.06 and 0.94 step 11.8pt 
                with
                {
                    \draw (0pt, 4pt) -- (0pt, -4pt);
                    \draw (-4pt, 0pt) -- (4pt, 0pt);
                }
            }
  }]

  \node[vtxx] (A) at (0, 1.25) {$+$};
  \node[vtxx] (B) at (0, -1.25) {$-$};
  \node[vtxx] (C) at (2.5,1.25) {$+$};
  \node[vtxx] (D) at (2.5,-1.25) {$-$};

  \draw[blackedge] (A)--(C);
  \draw[rededge]   (A)--(B) node[midway,elab,left]{\Large 1};
  \draw[rededge]   (C)--(D) node[midway,elab,right]{\Large 3};

  \begin{scope}[xshift=6.8cm]
    \node[num] (E1) at (0,0) {\large 1};
    \node[num] (E2) at (4.2,0) {\large 3};
    \path[plusmarks](E1)--(E2);
  \end{scope}
\end{tikzpicture}
}

        \caption{Local digraph for $P_4$.}
        \label{fig:path-four}
    \end{figure}
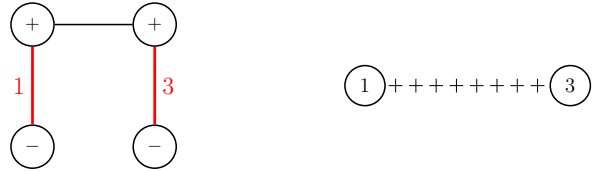
    
    \item [(6)] ($K_3^+$) Note that for the triangle with a pendant edge $K_3^+$, by taking the pendant edge and the edge of the triangle not incident to it, we get a perfect matching that is the unique fractional perfect matching (see \cref{fig:tri-pendant} below). Since $\abs{\Aut(K_3^+)}=2$ and since $\alpha^*(K_3^+)=2$, \cref{thm:aind-eind-ind} implies that $4 \eind(K_3^+) = \ind(\ldg(G,M)) = \aind(\ldg(G,M)) = 1$. The last two equalities follows from taking the host local digraph $H$ where every ordered pair of vertices $i < j$ are connected by two local edges with the sign patterns as in \cref{fig:tri-pendant}. Note that every pair of $(i,j)$ induces a $\ldg(G,M)$. Furthermore, because this host graph is acyclic, we therefore have $\aind(\ldg(K_3^+,M)) = \ind(\ldg(K_3^+,M))=1$. This gives $\eind(K_3^+) = \frac{1}{4}$, as desired. 

    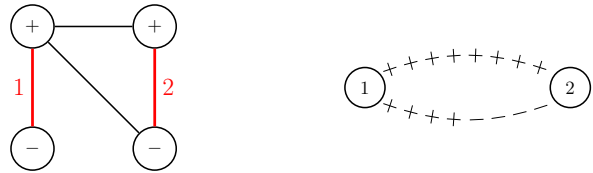
\begin{figure}[!htp]
        \centering
    
        \scalebox{0.65}{\begin{tikzpicture}[x=1cm,y=1cm, plusmarks/.style={
    postaction=decorate,
    decoration = {
            markings,
            mark = 
                between positions 0.06 and 0.94 step 12.7pt 
                with
                {
                    \draw (0pt, 4pt) -- (0pt, -4pt);
                    \draw (-4pt, 0pt) -- (4pt, 0pt);
                }
            }
  }, 
  halfplus/.style={
    postaction=decorate,
    decoration = {
            markings,
            mark = 
                between positions 0.06 and 0.48 step 12.7pt 
                with
                {
                    \draw (0pt, 4pt) -- (0pt, -4pt);
                    \draw (-4pt, 0pt) -- (4pt, 0pt);
                }
            }
  }, 
  halfminus/.style={
    postaction=decorate,
    decoration = {
            markings,
            mark = 
                between positions 0.55 and 0.94 step 12.7pt 
                with
                {
                    \draw (-4pt, 0pt) -- (4pt, 0pt);
                }
            }
  }]

  \node[vtxx] (A) at (0, 1.25) {$+$};
  \node[vtxx] (B) at (0, -1.25) {$-$};
  \node[vtxx] (C) at (2.5,1.25) {$+$};
  \node[vtxx] (D) at (2.5,-1.25) {$-$};

  \draw[blackedge] (A)--(C);
  \draw[blackedge] (A)--(D);

  \draw[rededge]   (A)--(B) node[midway,elab,left]{\Large 1};
  \draw[rededge]   (C)--(D) node[midway,elab,right]{\Large 2};

  \begin{scope}[xshift=6.8cm]
   \node[num] (A) at (0,0) {1};
\node[num] (B) at (4.2,0) {2};
\path[plusmarks] (0.3,0.3) to[out=20, in=160] (3.9,0.3);
\path[halfplus] (0.3,-0.3) to[out=-20, in=-160] (3.9,-0.3);
\path[halfminus] (0.3,-0.3) to[out=-20, in=-160] (3.9,-0.3);
  \end{scope}
\end{tikzpicture}}
        \caption{Local digraph for $K_3^+$.}
        \label{fig:tri-pendant}
    \end{figure}
    \item [(7)] ($K_4^-$) Recall that for every $s,t$ the graph $K_s \lor \overline{K_t}=2^{-t}$ is formed by a complete bipartite graph with $s$ vertices on one side and $t$ on the other, and the $s$ vertices forming $K_s$. We will show for every $t$ that $\eind(K_t \lor \overline{K_t}) = 2^{-t}$, which in particular for $t=2$ shows that $\eind(K_4^-) =\frac{1}{4}$.
        
    Indeed, fix $a,b$ such that $ab =m$ and $b \gg a \gg 1$ as $m$ tends to infinity. 
    As described in Table~\ref{tab:small-summary}, for the lower bound, we can consider the host graph given by $H=K_a \lor \overline{K_b}$. Then, any $t$ vertices from $K_a$ together with any $t$ vertices from $\overline{K_b}$ induce a $K_t \lor \overline{K_t}$, and so $N_{\ind}(K_t \lor \overline{K_t}, H) = \binom{a}{t}\binom{b}{t}$. In particular, since $\alpha^*(K_t \lor \overline{K_t}) = t$ and $\abs{\Aut(K_t \lor \overline{K_t})}=(t!)^2$ we have \[\eind(K_t \lor \overline{K_t}, m) \geq \limsup_{m\to \infty} \frac{(t!)^2}{\left(2 \cdot \left( \binom{a}{2} + ab \right) \right)^t}\binom{a}{t}\binom{b}{t} = 2^{-t}.\] 

    For the upper bound, since $K_t \lor \overline{K_t}$ contains $t!$ distinct perfect matchings, by Claim~\ref{claim:upper-bound}, it follows that $\eind(K_t \lor \overline{K_t}) \leq \lim_{m \to \infty} \frac{(t!)^2}{t!(2m)^t} \binom{m}{t} = 2^{-t}$.\qedhere
\end{enumerate}
\end{proof}

\subsection{The $5$-cycle}\label{sec: five cycle}

In this subsection, we prove \cref{thm:C5} via a simple application of the mixture bound, \cref{prop:mix}.
\begin{proof}[Proof of \cref{thm:C5}]

Let $H$ be a graph with $m$ edges. We sample an induced embedding of $C_5$, $(X_1,\dots,X_5)\in V(H)^5$ uniformly at random, so that $X_1X_2,X_2X_3,X_3X_4,X_4X_5,X_5X_1$ are the only edges among these five vertices. It follows that
\[\HH(X_1,\dots,X_5)=\log_2E_{\ind}(C_5,H).\]
Together with the uniform bound $\HH(X_1,X_2)\leq \log_2 (2m)$, it is now sufficient to prove 
\[
    2\HH(X_1,\dots,X_5)+\log_2 250\leq 5\HH(X_1,X_2).
\]
This is because substituting the uniform bound gives
\[2\log_2 E_{\ind}(C_5,H)+\log_2 250\leq 5\log (2m),\]
which implies $\eind(C_5)\leq \frac{1}{\sqrt{250}}$.

To upper bound $2\HH(X_1,\ldots, X_5)$, we use the chain rule and symmetry to get
\[2\HH(X_1,\dots,X_5)=2\HH(X_1,X_2)+2\HH(X_1\mid X_2,X_3)+2\HH(X_1\mid X_2,X_3,X_4)+2\HH(X_1\mid X_2,X_3,X_4,X_5).\]
To proceed, we need the following two claims to bound the conditional entropies. These two claims are both proved by applying the mixture bound, \cref{prop:mix}.

\begin{claim}\label{claim:EntropyIneq2}
    \[\HH(X_1\mid X_2,X_3,X_4,X_5)\leq \HH(X_1\mid X_2,X_3,X_5)\leq \HH(X_1\mid X_2)-\log_22.\]
\end{claim}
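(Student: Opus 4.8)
The first inequality is immediate from \emph{dropping conditioning}: since $X_4$ just enlarges the conditioning, $\HH(X_1\mid X_2,X_3,X_4,X_5)\le\HH(X_1\mid X_2,X_3,X_5)$, so there is nothing to do there. For the second inequality my plan is to first reduce to a statement about a single value of $X_2$: because $\HH(X_1\mid X_2,X_3,X_5)=\sum_{b}\PP(X_2=b)\,\HH(X_1\mid X_2=b,X_3,X_5)$ and likewise for $\HH(X_1\mid X_2)$, it suffices to prove, for every $b\in\supp(X_2)$, the ``local'' bound $\HH(X_1\mid X_2=b,X_3,X_5)\le\HH(X_1\mid X_2=b)-1$; equivalently, writing $I(X_1;X_3,X_5\mid X_2=b):=\HH(X_1\mid X_2=b)-\HH(X_1\mid X_2=b,X_3,X_5)$, it suffices that $I(X_1;X_3,X_5\mid X_2=b)\ge 1$.

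The source of the additive $1=\log_2 2$ is the reflection automorphism of $C_5$ fixing the vertex in position $2$, which sends $(1,2,3,4,5)\mapsto(3,2,1,5,4)$. Since the uniform law on induced embeddings of $C_5$ is $\Aut(C_5)$-invariant, conditionally on $X_2=b$ one gets $(X_1,X_3,X_4,X_5)$ equal in distribution to $(X_3,X_1,X_5,X_4)$; in particular the pair $(X_1,X_3)$ is exchangeable. As $X_1$ and $X_3$ are always distinct vertices of the induced copy, the conditional law of $X_1$ given $X_2=b$ and the \emph{unordered} pair $\{X_1,X_3\}$ is uniform on that two-element set, and this ``one of two equally likely choices'' is exactly the bit we want. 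The reason one must keep $X_5$ in the conditioning, and not only $X_3$, is that $X_3$ alone need not carry a full bit about $X_1$ (e.g.\ when $N(b)$ is independent and the copies are ``spread out'', conditioning on $X_3$ lowers the entropy only by $\log_2\frac{d}{d-1}=o(1)$ as $d\to\infty$); on the other hand $X_5$ is always adjacent to $X_1$ and never adjacent to $X_3$, so for a fixed value $e$ of $X_5$ the admissible values of $X_1$ lie in $N(b)\cap N(e)$ while those of $X_3$ lie in $N(b)\setminus N[e]$, two \emph{disjoint} subsets of $N(b)$.

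That disjointness is precisely the hypothesis of the mixture bound, Proposition~\ref{prop:mix}. I would apply it to the conditional laws of $X_1$ and of $X_3$ given $(X_2,X_5)=(b,e)$ to produce a distribution on $N(b)$ whose exponential entropy is at least $2^{\HH(X_1\mid X_2=b,X_5=e)}+2^{\HH(X_3\mid X_2=b,X_5=e)}$; comparing it with $X_1\mid X_2=b$ — again via the reflection, which gives $\HH(X_3\mid X_2=b,X_5=e)=\HH(X_1\mid X_2=b,X_4=e)$ and makes $\{X_4=e\}$ and $\{X_5=e\}$ equiprobable given $X_2=b$ — should yield the key estimate
\[
2^{\HH(X_1\mid X_2=b)}\ \ge\ 2^{\HH(X_1\mid X_2=b,X_5=e)}+2^{\HH(X_3\mid X_2=b,X_5=e)}
\]
for every admissible $e$ (an equality, in fact, on the natural extremal constructions). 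From here, taking logarithms, bounding $\HH(X_1\mid X_2=b,X_3,X_5=e)$ by the right-hand exponents, and averaging over $e$ with weights $\PP(X_5=e\mid X_2=b)$ (using $\HH(X_1\mid X_2=b)\ge\HH(X_1\mid X_2=b,X_5)$ to soak up the auxiliary conditioning) gives the local bound, and then averaging over $b$ gives the claim.

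I expect the main obstacle to be exactly that mixture-bound estimate and the bookkeeping around it. A purely entropic attempt at $I(X_1;X_3,X_5\mid X_2=b)\ge 1$ is circular: every natural use of the chain rule or submodularity just reduces the inequality to itself, so the argument must genuinely exploit the combinatorial disjointness of $N(b)\cap N(e)$ and $N(b)\setminus N[e]$ — which only becomes available after conditioning on $X_5$ — and one then has to set up the estimate so that this extra conditioning on $X_5$ is paid back cleanly at the end.
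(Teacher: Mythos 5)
Your first inequality (dropping conditioning) is fine, but the second one is where your argument genuinely fails: the displayed ``key estimate'' $2^{\HH(X_1\mid X_2=b)}\ge 2^{\HH(X_1\mid X_2=b,X_5=e)}+2^{\HH(X_3\mid X_2=b,X_5=e)}$ is false. Take $H$ to be a blowup of $C_5$ with parts $V_1,\dots,V_5$ of sizes $n_1,\dots,n_5$ (induced $5$-cycles are exactly the transversals). Fix $b\in V_2$ and $e\in V_5$. Conditioning on $X_2=b$ and $X_5=e$ forces the orientation, so $X_1$ is uniform on $V_1$ and $X_3$ is uniform on $V_3$, making the right-hand side $n_1+n_3$; whereas $X_1\mid X_2=b$ is the half-half mixture of uniform on $V_1$ and uniform on $V_3$, so the left-hand side is $2\sqrt{n_1n_3}$. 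For $n_1\neq n_3$ your estimate is the reverse of AM--GM and fails (e.g.\ $n_1=4$, $n_3=1$ gives $4<5$); the fact that it is an equality on \emph{balanced} blowups does not help, since the claim must hold for every host graph. The underlying misuse of Proposition~\ref{prop:mix} is that the mixture bound produces \emph{some} mixture of the two conditional laws (with entropy-optimal weights) whose entropy exceeds $\log_2$ of the sum, but there is no reason its entropy is at most $\HH(X_1\mid X_2=b)$: the law of $X_1\mid X_2=b$ is the half-half mixture, averaged over all values of $X_5$, not the entropy-maximizing mixture at a fixed $e$. Moreover, even granting the estimate, deducing $\HH(X_1\mid X_2=b,X_3,X_5=e)\le\HH(X_1\mid X_2=b)-1$ would require the two exponents on the right to be equal, which again fails in the same example (they are $\log_2 n_1$ and $\log_2 n_3$).

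The paper's proof takes a related but structurally different route that avoids fixing any values. It applies the mixture bound to the two jointly distributed $4$-tuples $(X_1,X_2,X_3,X_4)$ and $(X_1,X_2,X_4,X_3)$, whose supports are disjoint because $X_2X_3$ is always an edge while $X_2X_4$ never is; this gives $\HH(X_1,X_2,X_3,X_4)+1\le\HH(X_1,X_2,A,B)$ for the mixture $(X_1,X_2,A,B)$. Then subadditivity bounds this by $\HH(X_2\mid X_1)+\HH(X_1,A,B)$, and the reflection fixing position $1$ (exchangeability of $(X_3,X_4)$ given $X_1$) identifies the law of $(X_1,A,B)$ with that of $(X_1,X_3,X_4)$, yielding $\HH(X_2\mid X_1,X_3,X_4)\le\HH(X_1\mid X_2)-1$; a rotation of indices then gives the claim. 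Crucially, this only ever produces the \emph{averaged} (conditional-entropy) inequality, never a per-value statement for fixed $X_2=b$ or $X_5=e$; your reduction to a local bound at fixed $(b,e)$ is exactly where the argument breaks, and the circularity you worry about is resolved in the paper by keeping the mixture at the level of whole ordered embeddings (exploiting the edge/non-edge disjointness of $X_2X_3$ versus $X_2X_4$) rather than of a single coordinate distribution inside $N(b)$.
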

\begin{proof}
    The first inequality is simply dropping conditioning, so we will focus on the second inequality.
    Note that $(X_1,X_2,X_3,X_4)$ and $(X_1,X_2,X_4,X_3)$ have disjoint supports. This is because $X_2X_3$ is always an edge and $X_2X_4$ is always not an edge. By the mixture bound, we have
    \[2\cdot 2^{\HH(X_1,X_2,X_3,X_4)}\leq 2^{\HH(X_1,X_2,A,B)},\]
    where $(X_1,X_2,A,B)$ is a mixture of the two random tuples $(X_1,X_2,X_3,X_4)$ and $(X_1,X_2,X_4,X_3)$. Taking logarithms, we get
    \[\HH(X_1,X_2,X_3,X_4)+\log_22\leq \HH(X_1,X_2,A,B).\]
    Since $(X_3,X_4\mid X_1)$ is symmetric, we know that $(X_1,X_3,X_4)$ and $(X_1,A,B)$ have the same distribution. Thus, we have 
    \[\HH(X_1,X_2,A,B)\leq \HH(X_1,A,B)+\HH(X_2\mid X_1)=\HH(X_1,X_3,X_4)+\HH(X_1\mid X_2),\]
    where the first inequality follows from the definition of conditional entropy and dropping condition.
    Therefore, we may combine the inequalities above and get
    \[\HH(X_2\mid X_1,X_3,X_4)=\HH(X_1,X_2,X_3,X_4)-\HH(X_1,X_3,X_4)\leq \HH(X_1\mid X_2)-\log_22.\]
    By symmetry, we have
    \[\HH(X_1\mid X_2,X_3,X_5)=\HH(X_2\mid X_1,X_3,X_4),\]
    and this completes the proof.
\end{proof}

\begin{claim}\label{claim:EntropyIneq1}
    \[\HH(X_1\mid X_2,X_3,X_4)\leq \HH(X_1)-\log_2 5.\]
\end{claim}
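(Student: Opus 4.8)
We must prove $\HH(X_1\mid X_2,X_3,X_4)\le\HH(X_1)-\log_2 5$. Since dropping the conditioning on $X_4$ only decreases the left‑hand side, it suffices to establish the possibly stronger bound $\HH(X_1\mid X_2,X_3)\le\HH(X_1)-\log_2 5$, and by the definition of conditional entropy this is equivalent to
\[
  \HH(X_1,X_2,X_3)+\log_2 5\;\le\;\HH(X_1)+\HH(X_2,X_3).
\]
Here the tuple $(X_1,X_2,X_3)$ forms an induced $P_3$ occurring as three consecutive vertices of the random induced $C_5$, $(X_2,X_3)$ is the corresponding ordered edge, and $X_1$ is a single vertex lying on an induced $C_5$.

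\textbf{Strategy: the mixture bound.} The plan is to apply \cref{prop:mix} to five auxiliary tuples $T_1,\dots,T_5$ with the following three features. (a) Each $T_k$ has entropy equal to $\HH(X_1,X_2,X_3)$; this comes for free by taking the $T_k$ to be reorderings of three suitably chosen vertices of the same random induced $C_5$ that induce a $P_3$, and invoking the $\ZZ_5$‑rotational symmetry $(X_1,\dots,X_5)\overset{d}{=}(X_{1+j},\dots,X_{5+j})$ together with invariance of entropy under permuting coordinates. (b) The supports of $T_1,\dots,T_5$ are pairwise disjoint; this is arranged by choosing the five coordinate orderings so that the induced edge/non‑edge patterns on the coordinate pairs are pairwise distinct, since then a tuple in one support carries an edge in a slot in which a tuple in another carries a non‑edge. (c) Each $T_k$ is built so that one distinguished coordinate is a single cycle vertex and the remaining coordinates form an induced edge, both in a fixed orientation, so that these two marginals are distributed exactly as $X_1$ and as $(X_2,X_3)$. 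Granting (a) and (b), \cref{prop:mix} yields a mixture $Z$ of $T_1,\dots,T_5$ with $5\cdot 2^{\HH(X_1,X_2,X_3)}\le 2^{\HH(Z)}$; granting (c) and applying subadditivity to the decomposition of $Z$ into its ``$X_1$‑coordinate'' and its ``edge‑coordinates'' gives $\HH(Z)\le\HH(X_1)+\HH(X_2,X_3)$. Taking logarithms and combining the two inequalities produces the display above. (The companion bound $\HH(X_1\mid X_2,X_3,X_4,X_5)\le\HH(X_1\mid X_2)-1$ is the same template with a $\ZZ_2$ symmetry and a single swap of coordinates, exactly as in \cref{claim:EntropyIneq2}.)

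\textbf{Main obstacle.} The delicate part is realizing (a)--(c) at once. Because $C_5$ is rotationally symmetric, any ``canonical'' way of producing the five tuples yields five tuples with the \emph{same} adjacency pattern, hence overlapping supports; so the symmetry must be broken by hand, choosing five genuinely different orderings while still keeping the ``edge‑coordinates'' of the resulting mixture equidistributed with $(X_2,X_3)$ and the remaining coordinate equidistributed with $X_1$. One also has to check that each of the five auxiliary tuples has nonempty support for \emph{every} host graph $H$ (some four‑vertex configurations through a fixed $P_3$ can be empty, e.g.\ for balanced blow‑ups of $C_5$). I expect pinning down these five orderings to be the real work; if the three‑coordinate version is too rigid, a natural fallback is to keep the conditioning on $X_4$ and run the argument with four‑coordinate tuples coming from the induced $P_4$ on four consecutive vertices of the $C_5$, where there is more freedom in the choice of orderings.
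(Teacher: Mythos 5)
Your very first reduction is already fatal: the statement $\HH(X_1\mid X_2,X_3)\leq \HH(X_1)-\log_2 5$ that you propose to prove instead is false in general, so no amount of work on the mixture construction can save the three-coordinate version. To see this, take the host graph $H$ to be a typical sample of $G(n,1/2)$. Then $X_1$ is essentially uniform over the $n$ vertices, so $\HH(X_1)=\log_2 n+o(1)$, while conditionally on the ordered edge $(X_2,X_3)$ the vertex $X_1$ is essentially uniform over the roughly $n/4$ vertices adjacent to $X_2$ and non-adjacent to $X_3$ (the number of completions to an induced $C_5$ concentrates), so $\HH(X_1\mid X_2,X_3)=\log_2 n-2+o(1)$. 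Hence $\HH(X_1)-\HH(X_1\mid X_2,X_3)\to 2<\log_2 5$. The factor $5$ genuinely needs the conditioning on all three vertices $X_2,X_3,X_4$: for random-like hosts the mutual information is about $3$ bits with three conditioned vertices but only about $2$ bits with two, so dropping $X_4$ discards exactly the part of the inequality that makes it true.

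The obstacle you flag in your last paragraph is also not a technicality but a structural impossibility for your setup. If each $T_k$ is an ordering of three cycle vertices inducing a $P_3$, its coordinate pairs carry exactly two edges, so there are only three possible edge/non-edge patterns on the coordinate pairs; and two orderings with the same pattern do have overlapping supports in, say, the balanced blowup of $C_5$ (the triple $(a,b,c)$ with $a\in V_1,b\in V_2,c\in V_3$ arises both as $(X_1,X_2,X_3)$ and as $(X_3,X_4,X_5)$ of different induced copies). So at most three, not five, pairwise support-disjoint tuples of this shape exist. Your four-coordinate fallback fares no better: if the four coordinates are four consecutive cycle vertices and you insist that three fixed slots be distributed as $(X_2,X_3,X_4)$ (as your subadditivity step requires), the distinguished vertex is an endpoint of the $P_4$ and only two adjacency patterns are possible. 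The paper's proof avoids both problems by changing the shape of the tuples: it keeps the entire $5$-tuple $(X_1,\dots,X_5)$ and appends a sixth vertex $Y_i$, a conditionally independent resample of $X_i$ given $(X_{i+1},X_{i+2},X_{i+3})$. The five $6$-tuples $(X_1,\dots,X_5,Y_i)$ are pairwise support-disjoint because the extra vertex is adjacent to $x_{i+1}$ but not to $x_{i+2},x_{i+3}$, which pins down $i$; and instead of splitting $\HH(Z)$ by subadditivity into $\HH(X_1)+\HH(X_2,X_3)$, one factors out the common $\HH(X_1,\dots,X_5)$, uses $\HH(Y_i\mid X_1,\dots,X_5)=\HH(X_1\mid X_2,X_3,X_4)$, and bounds $\HH(Y\mid X_1,\dots,X_5)\leq\HH(Y)=\HH(X_1)$ to obtain $5\cdot 2^{\HH(X_1\mid X_2,X_3,X_4)}\leq 2^{\HH(X_1)}$. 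If you want to repair your argument, this ``resample one vertex and compare the five resamplings'' device is the missing idea.
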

\begin{proof}
    For $1\leq i\leq 5$, let $Y_i$ be a conditional independent resampling of $X_i$ conditioning on $X_{i+1},X_{i+2},X_{i+3}$, where the indices are considered modulo $5$.
    We claim that the supports of $(X_1,\dots,X_5,Y_i)$ for $1\leq i\leq 5$ are mutually disjoint. 
        
    Assume for the sake of contradiction that 
    \[(x_1,\dots,x_5,y)\in \supp(X_1,\dots,X_5,Y_i)\cap\supp(X_1,\dots,X_5,Y_j).\]
    By symmetry, we may assume without loss of generality that $i=1$ and $j\in\{2,3\}$. Since $(x_1,\dots,x_5,y)\in \supp(X_1,\dots,X_5,Y_1)$, we know that $yx_2$ is an edge but $yx_3,yx_4$ are not edges. If $j=2$, then we know that $yx_3$ must be an edge since $(x_1,\dots,x_5,y)\in \supp(X_1,\dots,X_5,Y_2)$, which is impossible. If $j=3$, then $yx_4$ must be an edge since $(x_1,\dots,x_5,y)\in \supp(X_1,\dots,X_5,Y_3)$, which is also impossible.

    Therefore, we may apply the mixture bound and get 
    \[\sum_{i=1}^5 2^{\HH( X_1,\dots,X_5,Y_i)}\leq 2^{\HH( X_1,\dots,X_5,Y)},\]
    where $Y$ is a mixture of $Y_1,\dots,Y_5$. Factoring out $2^{\HH( X_1,\dots,X_5)}$ from both sides and using the chain rule, we get
    \[\sum_{i=1}^5 2^{\HH(Y_i\mid X_1,\dots,X_5)}\leq 2^{\HH(Y\mid X_1,\dots,X_5)}.\]
    Note that the marginal distributions of $Y_1,\dots,Y_5$ and $X_1$ are all the same, so we have 
    \[\HH(Y\mid X_1,\dots,X_5)\leq \HH(Y)=\HH(X_1).\]
    From the definition of $Y_i$, we know that 
    \[\HH(Y_i\mid X_1,\dots,X_5)=\HH(X_i\mid X_{i+1},X_{i+2},X_{i+3})=\HH(X_1\mid X_2,X_3,X_4).\]
    Thus, we have
    \[5\cdot 2^{\HH(X_1\mid X_2,X_3,X_4)}\leq 2^{\HH(X_1)},\]
    and the claim follows from taking logarithms on both sides.
\end{proof}

Finally, we are ready to bound $2\HH(X_1,\dots,X_5)$. 
Recall that
\[2\HH(X_1,\dots,X_5)=2\HH(X_1,X_2)+2\HH(X_1\mid X_2,X_3)+2\HH(X_1\mid X_2,X_3,X_4)+2\HH(X_1\mid X_2,X_3,X_4,X_5).\]
Using \cref{claim:EntropyIneq2} and \cref{claim:EntropyIneq1}, we get
\[\HH(X_1\mid X_2,X_3,X_4,X_5)\leq \HH(X_1\mid X_2)-\log_22,\]
and
\[2\HH(X_1\mid X_2,X_3,X_4)+\HH(X_1\mid X_2,X_3,X_4,X_5)\leq 3\HH(X_1\mid X_2,X_3,X_4)\leq 3\HH(X_1)-3\log_2 5.\]
In addition, we may simply drop conditioning and get
\[2\HH(X_1\mid X_2,X_3)\leq 2\HH(X_1\mid X_2).\]
Lastly, combining the inequalities above, we get 
\begin{align*}
    2\HH(X_1,\dots,X_5)&\leq 2\HH(X_1,X_2)+3\HH(X_1)+3\HH(X_1\mid X_2)-\log_2 250\\
    &=5\HH(X_1,X_2)-\log_2 250.\qedhere
\end{align*}

\end{proof}

\section{Concluding remarks and open problems}\label{sec:conclusion}
\subsection{Locally directed graphs}
In this paper, we define locally directed graphs to generalize \cref{thm:eind-as-hard-as-ind} to \cref{thm:aind-eind-ind}, which turns out to be useful in understanding edge inducibilities of many graphs.
We remark that there seem to be many interesting problems one can ask about locally directed graphs that may or may not be related to edge inducibility itself.
Perhaps the most interesting connection we are aware of now is that deciding the vertex inducibility of $LDC_3$, the locally directed cycle of length $3$, is equivalent to the following conjecture by Erd\H{o}s and S\'os \cite{ErdSos82}.
\begin{conjecture}[Erd\H{o}s--S\'os]\label{conj:ErdosSos}
    Any $3$-uniform hypergraph on $n$ vertices with bipartite link at every vertex has at most $(\frac{1}{4}+o(1))\binom{n}{3}$ edges.
\end{conjecture}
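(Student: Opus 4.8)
The plan is to reduce to the local-digraph statement recorded just before \cref{conj:ErdosSos}: it suffices to prove $\ind(LDC_3)\le\frac14$ in the sense of \cref{def:local-digraph-inducibility}. The easy half of the dictionary is this: given a $3$-uniform hypergraph $\mathcal H$ on $n$ vertices in which every link $L_v$ is bipartite, fix for each $v$ a proper $2$-coloring $\sigma_v$ of $L_v$, and let $D$ be the simple local digraph on $V(\mathcal H)$ whose edges are the pairs lying in some hyperedge, with $\sgn(x,xy)\eqdef\sigma_x(y)$. For each $\{x,y,z\}\in\mathcal H$ all three pairs are edges of $D$ and, since $yz\in L_x$, $xz\in L_y$ and $xy\in L_z$, each of $x,y,z$ sees its two incident edges with opposite signs; hence $D$ induces a copy of $LDC_3$ on $\{x,y,z\}$, distinct hyperedges give distinct induced copies, and therefore $e(\mathcal H)\le N_{\ind}(LDC_3,D)\le(\ind(LDC_3)+o(1))\binom n3$. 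In the converse direction the iterated blowup of $LDC_3$ --- in which every triple meeting all three top-level parts induces $LDC_3$, while a triple with two vertices in one part never does, since those two send equal signs to the third --- has $LDC_3$-density tending to $\frac{3!}{3^3-3}=\frac14$, and under the same dictionary this is exactly the conjectured extremal hypergraph; note that $LDC_3$ is not acyclic, so this blowup is outside the scope of \cref{lem:LDAG-blowup} and the fact $\aind(LDC_3)=0$ plays no role. So the task is to prove $\ind(LDC_3)\le\frac14$.

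As a warm-up, $\ind(LDC_3)\le\frac12$ is immediate: in any $n$-vertex local digraph $D$, writing $N^{\pm}_D(u)$ for the positively and negatively signed neighborhoods of $u$, every induced $LDC_3$ through $u$ uses one vertex of $N^+_D(u)$ and one of $N^-_D(u)$, so the number of copies is at most $\frac13\sum_u|N^+_D(u)|\,|N^-_D(u)|\le\frac1{12}\sum_u d_D(u)^2\le\frac{n^3}{12}$. This is exactly the trivial bound $e(\mathcal H)\le(\tfrac12+o(1))\binom n3$ that bipartiteness of the links gives directly, and the whole difficulty is the factor-$2$ improvement.

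To gain that factor I would follow the entropy route the paper uses for $\ind(C_5)$ in the proof of \cref{thm:C5}, organized around the mixture bound \cref{prop:mix}, whose equality case is exactly the iterated-blowup family --- the conjectured extremal configuration here. For a near-extremal host $D$ on $n$ vertices with $N=N_{\ind}(LDC_3,D)$ induced copies, sample a uniform ordered induced copy $(X_1,X_2,X_3)$, so $\HH(X_1,X_2,X_3)=\log_2(6N)$; expanding by the chain rule and using the $\ZZ/3$-symmetry of $LDC_3$ reduces matters to bounding the conditional terms $\HH(X_1\mid X_2,X_3)$ and their analogues with more conditioning against $\HH(X_1)\le\log_2 n$. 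In the style of \cref{claim:EntropyIneq1}, one introduces conditionally independent resamplings $Y_1,Y_2,Y_3$ of the three roles of a prospective induced $LDC_3$; the key point to check is that a vertex resampled into two different roles on the same triple would need, at the vertex common to those two roles, an edge whose sign is simultaneously opposite to two already-opposite signs --- impossible --- so the extended tuples $(X_1,X_2,X_3,Y_i)$ have pairwise disjoint supports, \cref{prop:mix} applies, and one extracts additive savings. Assembling the resulting inequalities should, if the numerology cooperates, drive the bound down to $\frac14$; the computation can be calibrated against the iterated blowup of $LDC_3$, which must meet every inequality with equality.

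The main obstacle is precisely this last step. The $C_5$ computation in \cref{thm:C5} is a cautionary precedent: there, a similar use of the mixture bound lands a factor $2$ above the conjectured value, and the extra work needed to close such a gap is exactly what is still missing for \cref{conj:ErdosSos}; the entropy method as sketched may well reach only some constant strictly between $\frac14$ and $\frac12$. If so, the natural fallback is the Balogh--Hu--Lidick\'y--Pfender strategy that settled $\ind(C_5)$: a flag-algebra semidefinite bound of $\frac14$, followed by a stability argument showing that near-extremal hosts are close to an iterated blowup of $LDC_3$, and then a recursion to finish. That route would be computer-assisted, against the spirit of the rest of the paper, and --- as always with flag algebras --- carries no a priori guarantee of certifying the value $\frac14$.
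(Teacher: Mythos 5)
This statement is not proved in the paper at all: it is stated as \cref{conj:ErdosSos}, an open conjecture of Erd\H{o}s and S\'os, and the paper's only contribution concerning it is the observation (attributed to Maya Sankar) that it is \emph{equivalent} to the assertion $\ind(LDC_3)=\frac14$. Your proposal does not close this gap, and you say so yourself. The first part of your write-up --- the dictionary between $3$-uniform hypergraphs with bipartite links and local digraphs, via a proper $2$-coloring $\sigma_v$ of each link and $\sgn(x,xy)=\sigma_x(y)$, so that every hyperedge induces a copy of $LDC_3$ and hence $e(\mathcal H)\le N_{\ind}(LDC_3,D)$ --- is correct, as is the easy warm-up bound giving $\frac12$; but this is precisely the reduction the paper already alludes to, and it merely transfers the problem to proving $\ind(LDC_3)\le\frac14$, which is just as open as the original conjecture.

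The remainder is a plan, not a proof. The proposed entropy argument in the style of \cref{thm:C5} and \cref{prop:mix} is never carried out: no analogue of \cref{claim:EntropyIneq1} is actually verified for $LDC_3$, no system of inequalities is assembled, and you explicitly concede that the numerology may only yield a constant strictly between $\frac14$ and $\frac12$ (as indeed happens for $C_5$ in the paper, where the same method lands a factor $2$ above the conjectured value). The fallback via flag algebras and stability is likewise only named, with no certificate exhibited. Note also that the absence of acyclicity for $LDC_3$ is not a mere technicality you can wave away: it is exactly why \cref{thm:aind-eind-ind} and the LDAG machinery (e.g.\ the topological-sort arguments behind \cref{thm:aind-DP3}) give no leverage here, which is the paper's own explanation of why this inducibility problem is genuinely hard. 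In short, the missing idea is the entire quantitative core --- an argument pushing the bound from $\frac12$ down to $\frac14$ --- and no such argument exists in the paper or in your proposal.
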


A simple observation that might have not been written down formally is that this conjecture is equivalent to that $\ind(LDC_3)=\frac{1}{4}$ (this was communicated by Maya Sankar to the fourth author).
We remark that $LDC_3$ is not acyclic, and thus $\ind(LDC_3)$ does not give an upper bound on any edge inducibility of any graph via \cref{thm:aind-eind-ind}.
This suggests that the vertex inducibility problem of locally directed graphs should already be interesting on its own, and it can be extremely challenging even for small graphs.
Let us mention that if we instead work with directed host graphs, then it is a classical result (dating back to as early as \cite{KenBab40}) that the vertex inducibility of a directed cycle of length $3$ is exactly $\frac{1}{4}$.
This also shows that problems regarding locally directed graphs can be fundamentally harder than those for directed graphs.

Continuing this theme, in the proof of \cref{thm:P6}, we showed that $\ind(LDP_3)\leq \frac{2}{3}$ and $\aind(LDP_3)\geq \frac{10}{31}$.
It would also be interesting to determine $\ind(LDP_3)$ and $\aind(LDP_3)$ themselves.
The best lower bound we have for $\ind(LDP_3)$ is $\frac{2}{5}$ coming from the iterated blowup of $LDC_4$, and it is tempting to conjecture that the lower bounds are tight for both $\ind(LDP_3)$ and $\aind(LDP_3)$.
For comparisons, $\ind(P_3)=\frac{3}{4}$ was already known by Pippenger and Golumbic \cite{PipGol1975}, and for directed path $DP_3$ on $3$ vertices, Hladk\'y, Kr\'al' and Norin announced that $\ind(DP_3)=\frac{2}{5}$ in 2018 (see \cite{ChoLidPfe20}).
We may also define $\aind(DP_3)$ similarly to how we define acyclic vertex inducibility for local digraphs, where now we require the host graph to be acyclic.
In this case, we can prove that $\aind(DP_3)=\frac{1}{4}$, and a proof is provided in \cref{sec:aind-DP3}.

\subsection{Tightness of \cref{thm:aind-eind-ind}}
Let $G$ be a graph with a perfect matching $M$ that is the unique fractional perfect matching.
\cref{thm:aind-eind-ind} gives that $\frac{2^{\abs{M}}\abs{M}!}{\Aut(G)}\eind(G)\geq \aind(\ldg(G,M))$, and it is tight when $\aind(\ldg(G,M))=\ind(\ldg(G,M))$.
It would be interesting to know whether the equality always holds.

\begin{question}
    If $G$ is a graph with a perfect matching $M$ that is the unique fractional perfect matching, is it always true that
    \[\frac{2^{\abs{M}}\abs{M}!}{\Aut(G)}\eind(G)= \aind(\ldg(G,M))?\]
\end{question}

\subsection{Edge inducibilities of small graphs}
In this paper, the special cases that are not completely settled are $P_6$ and $C_5$.
We have conjectured in \cref{conj:cycle} that the edge inducibility of any cycle $C_k$ with $k\geq 4$ should match the one given by balanced blowups of $C_k$.
For $P_6$, we make the following bold conjecture.

\begin{conjecture}
    The edge inducibility of $P_6$ is equal to $\frac{5}{372}$.
\end{conjecture}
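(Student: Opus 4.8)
The lower bound $\eind(P_6)\ge \tfrac{5}{372}$ is already contained in \cref{thm:P6}, so the entire content of the conjecture is the matching upper bound. The first thing to record is that the easy upper bound $\eind(P_6)\le \ind(LDP_3)/24$ from \cref{thm:aind-eind-ind} cannot be pushed down to the conjectured value: as discussed in the concluding remarks one expects $\ind(LDP_3)=\tfrac{2}{5}$, which is strictly larger than $\tfrac{10}{31}$, so the inequality $24\,\eind(P_6)\le \ind(\ldg(P_6,M))$ is genuinely lossy here. Consequently the conjecture $\eind(P_6)=\tfrac{5}{372}$ is exactly equivalent to the conjunction of two statements: \textbf{(i)} $\aind(LDP_3)=\tfrac{10}{31}$, i.e.\ the iterated blow-up $H_{4^t}$ of \cref{thm:P6} is asymptotically optimal among \emph{acyclic} local digraphs, and \textbf{(ii)} the left inequality of \cref{thm:aind-eind-ind} is tight for $G=P_6$, i.e.\ $24\,\eind(P_6)=\aind(\ldg(P_6,M))$ (a special case of the tightness question raised in \cref{sec:conclusion}). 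The plan is to prove both.

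For \textbf{(ii)} it suffices to show that the local digraph produced from an extremal host graph can be taken acyclic. Concretely, take $H$ with $m$ edges achieving $\eind(P_6,m)$ and form the local digraph $H'$ on vertex set $E(H)$ exactly as in the proof of the second inequality of \cref{thm:aind-eind-ind}, so that each induced $P_6$ in $H$ yields a distinct induced $LDP_3$ in $H'$; then $N_{\ind}(LDP_3,H')/\binom{m}{3}\ge (1-o(1))\,24\,\eind(P_6)$. If $H'$ is acyclic this already gives $24\,\eind(P_6)\le\aind(LDP_3)$, so the task is to replace $H'$ by an acyclic local digraph with no fewer induced $LDP_3$'s on essentially the same vertex set. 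Since $LDP_3$ is itself acyclic one expects the ``cyclic part'' of $H'$ to be useless; I would make this precise with a switching argument in the style of stability proofs — locate a vertex of $H'$ lying on a locally directed closed walk, rewire its incident edges (equivalently, modify $H$ around the corresponding edge) so as to break the walk without decreasing $N_{\ind}(LDP_3,H')$, iterate, and invoke \cref{prop:topological-sort} once no closed walk survives — or, alternatively, a stability-plus-exact analysis showing directly that any near-extremal $H$ is close to a blow-up of the graphification of $H_{4^t}$.

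For \textbf{(i)} the lower bound is the construction in \cref{thm:P6}; the work is the upper bound $\aind(LDP_3)\le\tfrac{10}{31}$. Here I would use the topological sort of \cref{prop:topological-sort}: in an acyclic host local digraph on $n$ vertices, relabel signs so that every edge is a plus at its earlier endpoint, count induced $LDP_3$'s by their middle vertex, and bucket by the topological order. The resulting inequality must be solved not by a single blow-up but by the geometric series $\sum_{k\ge0}\tfrac{5}{16}\bigl(\tfrac{2}{64}\bigr)^k=\tfrac{10}{31}$, because the extremizer $H_{4^t}$ is self-similar (each ``$u$-part'' carries a rescaled copy of the whole construction); so the inequality one proves must be stable under the blow-up operation and tight only on recursion. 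I would look for an entropy argument modeled on the proof of \cref{thm:C5} — sample a uniform triple inducing $LDP_3$ and split on whether each partner of the middle vertex precedes or follows it in the topological order — and, if that is too rigid, fall back on a flag-algebra-type computation over local-digraph types on boundedly many vertices. The main obstacle, in both \textbf{(i)} and \textbf{(ii)}, is precisely this recursive character: unlike the clean single-blow-up extremizers behind \cref{thm:C5} and most inducibility results, the conjectured extremal host for $\eind(P_6)$ is a tower of blow-ups, so one needs a self-improving inequality for $\aind(LDP_3)$ together with a de-cycling step for $P_6$-extremal host graphs, and that is where essentially all the difficulty lies.
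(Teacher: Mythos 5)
The statement you are addressing is not proved in the paper at all: it appears only as a conjecture in \cref{sec:conclusion}, so there is no paper proof to match, and your text does not close the gap either. What you give is a (correct) reformulation, not a proof. Granting \cref{thm:P6} and \cref{thm:aind-eind-ind}, the conjecture is indeed equivalent to the conjunction of \textbf{(i)} $\aind(LDP_3)=\tfrac{10}{31}$ and \textbf{(ii)} $2^{\abs{M}}\abs{M}!\,\eind(P_6)/\abs{\Aut(P_6)}=\aind(\ldg(P_6,M))$; but these are exactly the two open problems the authors themselves single out in \cref{sec:conclusion} (the value of $\aind(LDP_3)$, and the tightness question for the left inequality of \cref{thm:aind-eind-ind}). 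For both you offer only candidate strategies ("a switching argument in the style of stability proofs", "an entropy argument modeled on \cref{thm:C5}", "a flag-algebra-type computation") without carrying out any of them, so the proposal is a research plan rather than a proof.

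Two concrete points where the plan is not yet an argument. For \textbf{(ii)}, the local digraph $H'$ built from an extremal host $H$ has no reason to be acyclic, and your de-cycling step is unspecified in the place where it matters: you must rewire so that (a) no induced copy of $LDP_3$ is destroyed (or the loss is compensated), and (b) the modified object is still, or can be converted back into, something controlled by $\aind$ rather than $\ind$; no local move with these two properties is exhibited, and it is not even clear one exists --- a priori the answer to the paper's tightness question could be negative. For \textbf{(i)}, you correctly observe that any proof of $\aind(LDP_3)\le\tfrac{10}{31}$ must be "self-improving" because the conjectured extremizer is the recursive tower $H_{4^t}$ rather than a single blow-up, but you do not produce the inequality: the topological-sort bookkeeping via \cref{prop:topological-sort}, the entropy split, and the flag-algebra fallback are all named, none is executed, and the middle-vertex count you sketch does not by itself see the geometric series $\sum_{k\ge 0}\tfrac{5}{16}\left(\tfrac{2}{64}\right)^k=\tfrac{10}{31}$. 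As it stands, essentially all of the mathematical content of the conjecture remains open after your reduction.
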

For general paths, we do not have a good candidate for conjectures, which is similar to the case of the vertex inducibility of paths.

\subsection{Upper tails for induced counts}
We conclude with another motivation mentioned in the introduction. Indeed, there is a tight connection between the upper tail problem and the problem of estimating $c_{\mathcal{H}}(G)$ for $\mathcal{H}$ being the family of $n$-vertex and $m$-edge graphs. As was shown by the second author \cite{Coh2024}, studying $\ind_{\mathcal{H}}(G)$ for the same family $\mathcal{H}$ can shed light on the upper tail problem for induced counts. We believe that studying the edge inducibility problem is the first step towards studying this problem, similarly to the case of $c_{\mathcal{H}}$ \cite{Alo1981, FriKah1998, JanOleRuc2004}.

Let us highlight that the breakthrough paper of Harel, Mousset, and Samotij \cite{HarMouSam2022} has two major steps towards applying a union bound type argument. The first is to gain control on the upper tail probability via the existence of ``seeds''. The second is to analyze the seed structure. The first step in \cite{HarMouSam2022} is not available for induced counts; that said, following their suggestion, Theorem 2.1 in \cite{Coh2024} extends their first step to induced counts. This leaves the main task of the second step. In both works, evaluating $c_{\mathcal{H}}(G)$ and $\ind_{\mathcal{H}}(G)$ are central tools in analyzing seeds and counting a compressed variant of them, enabling the use of the union bound.

\section{Acknowledgement}
We wish to thank Veronica Bitonti for numerous helpful discussions.
We would also like to thank the organizers of the Park City Mathematics Institute 2025 and Random Structures and Algorithms 2025 for bringing us together, making the collaboration possible. 
The second author is supported in
part by ERC Consolidator Grant 863438, and in part by ERC Consolidator Grant 101044123 (RandomHypGra). The fourth author is supported by the Jane Street Graduate Research Fellowship.

\bibliographystyle{plain}
\bibliography{bib.bib}

\appendix
\section{Acyclic inducibility of directed path on three vertices}\label{sec:aind-DP3}
Let $DP_3$ be the directed path on $3$ vertices, and let
\[\aind(DP_3) \eqdef \limsup_{n\to\infty}\max_{H:\textup{ DAG with }v(H)=n}\frac{N_{\ind}(DP_3,H)}{\binom{n}{3}}.\]
In this appendix, we prove the following.
\begin{theorem}\label{thm:aind-DP3}
    $\aind(DP_3) = \frac{1}{4}.$
\end{theorem}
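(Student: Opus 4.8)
The plan is to prove $\aind(DP_3)\ge\tfrac14$ by an explicit construction and $\aind(DP_3)\le\tfrac14$ by a limiting/variational argument, the latter being the main work. For the lower bound I would take iterated blowups of $DP_3$, as in the proof of \cref{thm:better-construction-intro}. Since $DP_3$ has no directed cycle, its iterated blowups stay acyclic (the argument of \cref{lem:LDAG-blowup}, read for digraphs rather than local digraphs), so they are admissible DAG hosts. If $f(n)$ is the number of induced $DP_3$'s in the iterated blowup on $n$ vertices, then splitting a triple according to how many of the three top‑level blocks $X\to Y\to Z$ it meets gives $f(n)=(n/3)^3+3f(n/3)$: a transversal triple (one vertex in each block) is an induced $DP_3$ precisely because $X$ and $Z$ are non‑adjacent, whereas triples meeting only two blocks induce in‑stars, out‑stars or transitive triangles. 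Solving the recursion, $f(n)=(\tfrac14+o(1))\binom n3$, matching $\tfrac{3!}{3^3-3}=\tfrac14$, so $\aind(DP_3)\ge\tfrac14$.

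For the upper bound, the first step is a reformulation: choosing a topological order $v_1<\dots<v_n$ of a host DAG $H$ turns it into a $2$-colouring $\chi\colon\binom{[n]}2\to\{0,1\}$ with $\chi(ij)=1$ iff $v_i\to v_j$ for $i<j$, and — since every arc points forward — a triple $i<j<k$ induces $DP_3$ iff $\chi(ij)=\chi(jk)=1$ and $\chi(ik)=0$. Hence $\aind(DP_3)$ equals the limsup over $n$ of the maximum, over all such $\chi$, of $\tfrac1{\binom n3}\#\{i<j<k:\chi(ij)=\chi(jk)=1,\ \chi(ik)=0\}$. The plan is to pass to a subsequential limit of near‑optimal $\chi$'s, a measurable ordered graphon $W\colon\{(x,y):0\le x<y\le1\}\to[0,1]$, for which this quantity converges to
\[6\,\Phi(W),\qquad \Phi(W)\eqdef\iiint_{x<y<z}W(x,y)\,W(y,z)\,(1-W(x,z))\,dx\,dy\,dz,\]
so the goal becomes $\sup_W\Phi(W)=\tfrac1{24}$. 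The extremiser should be the self‑similar ``iterated $3$-partite'' graphon $W_\star$ (partition $[0,1]$ into thirds, put $W_\star\equiv1$ between consecutive thirds, $W_\star\equiv0$ between the two outer thirds, and recurse inside each third); a base‑$3$ computation gives $6\,\Phi(W_\star)=\tfrac14$, since at the first base‑$3$ scale at which three i.i.d.\ uniform points are not all equal, of the $24$ non‑constant digit‑patterns exactly the $6$ rainbow ones yield $DP_3$ and the other $18$ never do, regardless of the recursion.

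Proving $\Phi(W)\le\tfrac1{24}$ for all $W$ is the crux and the step I expect to be the main obstacle. A maximiser is $\{0,1\}$-valued off the zero set of the first variation $\int_{z>b}W(b,z)(1-W(a,z))\,dz+\int_{x<a}W(x,a)(1-W(x,b))\,dx-\int_{a<y<b}W(a,y)W(y,b)\,dy$, so one may restrict to $0/1$-valued ordered graphons (i.e.\ ordered graph limits). From there, a symmetrisation/self‑improvement argument should show that a maximiser has a coarsest interval structure $[0,\alpha),[\alpha,\beta),[\beta,1]$ with $W\equiv1$ on the two consecutive cross‑blocks and $W\equiv0$ on the outer one — every ``open'' triple must straddle this partition in the only chord‑free way — and is again a maximiser inside each block; the recursion this produces, of the form $\phi=\tfrac14+(\text{strictly smaller})$, then forces $\phi=\tfrac1{24}$ and the balanced split $\alpha=\beta-\alpha=1-\beta=\tfrac13$. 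A limit‑free alternative would be a Goodman‑type count: the number of induced $DP_3$'s centred at $j$ is $|L_j||R_j|-e_R(L_j,R_j)$ (in‑ and out‑neighbourhoods of $j$, with $e_R$ the arcs between them), and $\sum_j e_R(L_j,R_j)$ equals the number of transitive triangles, so $N_{\ind}(DP_3,H)=\sum_j|L_j||R_j|-\#\{\text{transitive triangles}\}$; but extracting the sharp constant $\tfrac14$ from this — rather than the $\tfrac4{27}$ coming from quasirandom hosts — still requires exploiting the nested structure, which is why I expect the variational route to be cleaner. Combined with the construction, this yields $\aind(DP_3)=\tfrac14$.
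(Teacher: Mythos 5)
Your lower bound is fine and coincides with the paper's: iterated blowups of $DP_3$ remain acyclic, and your recursion $f(n)=(n/3)^3+3f(n/3)$ correctly yields $(\tfrac14+o(1))\binom n3$. The upper bound, however, is a plan rather than a proof, and the step you yourself flag as the crux is exactly what is missing. After the (correct) reformulation via a topological order, everything hinges on showing $\sup_W\Phi(W)=\tfrac1{24}$, and for that you invoke (i) existence of a maximising ordered graphon, (ii) that it can be taken $\{0,1\}$-valued, and (iii) a ``symmetrisation/self-improvement argument'' showing that any maximiser carries the coarsest three-interval structure of the iterated construction and is again a maximiser inside each block. Step (iii) is the whole theorem in disguise --- it is precisely the assertion that the conjectured extremal construction is optimal at the top level --- and no argument is offered for it beyond ``should show''; such structural claims about maximisers are the notorious difficulty in inducibility problems (compare the still-open $\ind(LDP_3)$ and $\aind(LDP_3)$ in this paper). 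Step (i) is also not free: limits of vertex-ordered graphs are not covered by the standard graphon compactness theorem, since only order-preserving rearrangements are allowed, so even the existence of a maximiser needs extra machinery (or a finite-$n$ reformulation), and your first-variation argument for (ii) presupposes (i). Your alternative ``Goodman-type'' identity $N_{\ind}(DP_3,H)=\sum_j\sabs{L_j}\sabs{R_j}-\#\{\text{transitive triangles}\}$ is correct in a DAG (any arc between an in- and an out-neighbour of $j$ must point forward), but, as you concede, it does not by itself produce the constant $\tfrac14$, so it does not close the gap either.

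For contrast, the paper's upper bound is finite and elementary, in the spirit of your Goodman-type identity but pushed to completion: fix a topological order and assign to each triple $i<j<k$ the weight $I+J+K$, where $I$ (resp.\ $K$) is $1$ if exactly one edge of the triple meets $v_i$ (resp.\ $v_k$) and $J$ is $-1$ if exactly one meets $v_j$. A case check shows $I+J+K\ge 0$ always and $=2$ on induced copies of $DP_3$, so $N_{\ind}(DP_3,H)\le\tfrac12\sum_{i<j<k}(I+J+K)$; rewriting the right-hand side through the forward/backward degrees $f_s,b_s$ of each vertex in the order and maximising a quadratic in $b_s-f_s$ vertex by vertex gives $N_{\ind}(DP_3,H)\le\tfrac18\sum_{s}(n+1-2s)^2=(\tfrac1{24}+o(1))n^3$. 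To salvage your route you would have to genuinely prove the structural claim (iii), or replace it by an inequality valid for all ordered $W$; as written, the upper bound in your proposal is unproven.
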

\begin{proof}
    Note that by taking the iterated blowup of $DP_3$ itself, we get
    \[\aind(DP_3)\geq \frac{3!}{3^3-3}=\frac{1}{4}.\]
    It remains to show the upper bound.

    Let $H$ be any DAG with $n$ vertices. 
    Take the topological sort of $v_1,v_2,\ldots, v_n$ of $H$, so that an edge between $v_i,v_j$ must point in the $v_i\to v_j$ direction if $i<j$.
    Then an induced copy of $DP_3$ corresponds to a triple $i<j<k$ with $v_i,v_k$ both adjacent to $v_j$ but $v_i,v_k$ are not adjacent.

    Now for any $i<j<k$, define $I(i,j,k),J(i,j,k),K(i,j,k)$ as follows:
    \begin{enumerate}
        \item $I(i,j,k)=1$ if in $v_iv_jv_k$, there is exactly one edge incident to $v_i$, and $0$ otherwise.
        \item $J(i,j,k)=-1$ if in $v_iv_jv_k$, there is exactly one edge incident to $v_j$, and $0$ otherwise.
        \item $K(i,j,k)=1$ if in $v_iv_jv_k$, there is exactly one edge incident to $v_k$, and $0$ otherwise.
    \end{enumerate}
    The key claim is that for any $i<j<k$, we have $I(i,j,k)+J(i,j,k)+K(i,j,k)\geq 0$, and furthermore, it is $2$ if $v_i,v_j,v_k$ forms an indirect triplet.
    To check that it is always at least $0$, we just need to make sure that when there is exactly one edge incident to $v_j$, the same must hold for one of $v_i$ or $v_k$, which clearly holds.    
    It remains to check $I(i,j,k)+J(i,j,k)+K(i,j,k)=2$ when $v_i,v_j,v_k$ form an indirect triplet.
    This is clear as $I(i,j,k)=K(i,j,k)=1$ and $J(i,j,k)=0$ in this case.
    
    As an immediate consequence,
    \[N_{\ind}(DP_3,H)\leq \frac{1}{2}\sum_{1\leq i<j<k\leq n}\left(I(i,j,k)+J(i,j,k)+K(i,j,k)\right).\]
    To evaluate this, for each $s\in [n]$, let $f_s$ be the number of edges going forward from $v_s$, i.e. $f_s = \abs{\left\{i\in [s-1]: v_iv_s\in E(H)\right\}}$.
    Similarly, let $b_s$ be the number of edges going backward from $s$, which is simply $\abs{\left\{j\in [n]\backslash [s]: v_sv_j\in E(H)\right\}}$.
    Then 
    \[\sum_{i<j<k}I(i,j,k) = \sum_{i=1}^{n}b_i(n-i-b_i)\]
    and similarly
    \[\sum_{i<j<k}K(i,j,k) = \sum_{k=1}^{n}f_k(k-1-f_k).\]
    Finally, we have
    \[\sum_{i<j<k}J(i,j,k) = -\sum_{j=1}^{n}\left(f_j(n-j-b_j)+b_j(j-1-f_j)\right).\]
    
    Adding those up, we get
    \[\sum_{i<j<k}I(i,j,k)+J(i,j,k)+K(i,j,k) = \sum_{s=1}^{n}(b_s-f_s)\left(n+1-2s-(b_s-f_s)\right)\leq \frac{1}{4}\sum_{s=1}^{n}\left(n+1-2s\right)^2.\]
    Therefore 
    \[N_{\ind}(DP_3,H)\leq \frac{1}{8}\sum_{s=1}^{n}(n+1-2s)^2 = \frac{1}{8}\left(\frac{1}{3}n^3+O(n^2)\right) = \left(\frac{1}{24}+o(1)\right)n^3.\]
    This shows that $\ind(DP_3)\leq \frac{3!}{24}=\frac{1}{4}$, as desired.
    \end{proof}
\end{document}